\documentclass[10pt,draft]{article}
\usepackage{graphicx}
\usepackage{amssymb}
\usepackage{graphicx}
\usepackage{amsmath,amsthm,amscd}
\usepackage[latin1]{inputenc}
\pagestyle{myheadings} \markboth{Haydee Jiménez \& Oswaldo
Lezama}{Gröbner Bases for Ideals of $\sigma-PBW$ Extensions}
\numberwithin{equation}{section}
\newtheorem{theorem}{Theorem}

\newtheorem{proposition}[theorem]{Proposition}
\newtheorem{lemma}[theorem]{Lemma}

\newtheorem{corollary}[theorem]{Corollary}

\newtheorem{definition}[theorem]{Definition}
\theoremstyle{definition}
\newtheorem{example}[theorem]{Example}

\newtheorem{remark}[theorem]{Remark}

\title{\textbf{Gröbner bases of modules\\over $\sigma-PBW$ extensions}}
\author{Haydee Jiménez \footnote{Graduate student, Universidad de Sevilla, Spain.}
\ \& Oswaldo Lezama\\
Seminario de Álgebra Constructiva - $\text{SAC}^2$\\
Departamento de Matemáticas\\
Universidad Nacional de Colombia, Bogotá, COLOMBIA\\
}
\date{}
\begin{document}
\maketitle \setlength{\parindent}{0pt}
\begin{abstract}\noindent For $\sigma-PWB$ extensions, we extend to modules the theory of Gröbner bases of left
ideals presented in \cite{Gallego2}. As an application, if $A$ is
a bijective quasi-commutative $\sigma-PWB$ extension, we compute
the module of syzygies of a submodule of the free module $A^m$.

\bigskip

\bigskip

\noindent \textit{Key words and phrases.} $PBW$ extensions,
noncommutative Gröbner bases, Buchberger's Algorithm, module of
syzygies.

\bigskip

\bigskip

\noindent 2010 \textit{Mathematics Subject Classification.} Primary: 16Z05. Secondary: 18G10.
\end{abstract}

\newpage

\section{Introduction}

In this paper we present the theory of Gröbner bases for submodules of $A^m$, $m\geq 1$, where
$A=\sigma(R)\langle x_1,\dots,x_n\rangle$ is a $\sigma-PBW$ extension of $R$, with $R$ a $LGS$ ring
(see Definition \ref{LGSring}) and $Mon(A)$ endowed with some monomial order (see Definition
\ref{monomialorder}). $A^m$ is the left free $A$-module of column vectors of length $m\geq 1$; if
$A$ is bijective, $A$ is a left Noetherian ring (see \cite{Lezama3}), then $A$ is an $IBN$ ring
(Invariant Basis Number), and hence, all bases of the free module $A^m$ have $m$ elements. Note
moreover that $A^m$ is a left Noetherian, and hence, any submodule of $A^m$ is finitely generated.
The main purpose is to define and calculate Gröbner bases for submodules of $A^m$, thus, we will
define the monomials in $A^m$, orders on the monomials, the concept of reduction, we will construct
a Division Algorithm, we will give equivalent conditions in order to define Gröbner bases, and
finally, we will compute Gröbner bases using a procedure similar to Buchberger's Algorithm in the
particular case of quasi-commutative bijective $\sigma-PBW$ extensions. The results presented here
generalize those of \cite{Gallego2} where $\sigma$-PBW extensions were defined and the theory of
Gröbner bases for the left ideals was constructed. Most of proofs are easily adapted from
\cite{Gallego2} and hence we will omit them. As an application, the final section of the paper
concerns with the computation of the module of syzygies of a given submodule of $A^m$ for the
particular case when $A$ is bijective quasi-commutative.
\begin{definition}\label{gpbwextension}
Let $R$ and $A$ be rings, we say that $A$ is a $\sigma-PBW$
extension of $R$ {\rm(}or skew $PBW$ extension{\rm)}, if the
following conditions hold:
\begin{enumerate}
\item[\rm (i)]$R\subseteq A$.
\item[\rm (ii)]There exist finite elements $x_1,\dots ,x_n\in A-R$ such $A$ is a left $R$-free module with basis
\begin{center}
$Mon(A):= \{x^{\alpha}=x_1^{\alpha_1}\cdots
x_n^{\alpha_n}|\alpha=(\alpha_1,\dots ,\alpha_n)\in
\mathbb{N}^n\}$.
\end{center}
In this case we say also that $A$ is a left polynomial ring over
$R$ with respect to $\{x_1,\dots,x_n\}$ and $Mon(A)$ is the set of
standard monomials of $A$. Moreover, $x_1^0\cdots x_n^0:=1\in
Mon(A)$.
\item[\rm (iii)]For every $1\leq i\leq n$ and $r\in R-\{0\}$ there exists $c_{i,r}\in R-\{0\}$ such that
\begin{equation}\label{sigmadefinicion1}
x_ir-c_{i,r}x_i\in R.
\end{equation}
\item[\rm (iv)]For every $1\leq i,j\leq n$ there exists $c_{i,j}\in R-\{0\}$ such that
\begin{equation}\label{sigmadefinicion2}
x_jx_i-c_{i,j}x_ix_j\in R+Rx_1+\cdots +Rx_n.
\end{equation}
Under these conditions we will write $A=\sigma(R)\langle x_1,\dots
,x_n\rangle$.
\end{enumerate}
\end{definition}
The following proposition justifies the notation that we have
introduced for the skew $PBW$ extensions.
\begin{proposition}\label{sigmadefinition}
Let $A$ be a $\sigma-PBW$ extension of $R$. Then, for every $1\leq
i\leq n$, there exist an injective ring endomorphism
$\sigma_i:R\rightarrow R$ and a $\sigma_i$-derivation
$\delta_i:R\rightarrow R$ such that
\begin{center}
$x_ir=\sigma_i(r)x_i+\delta_i(r)$,
\end{center}
for each $r\in R$.
\end{proposition}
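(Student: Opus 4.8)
The plan is to read $\sigma_i$ and $\delta_i$ straight off condition (iii) of Definition \ref{gpbwextension} and then verify the required algebraic properties using the uniqueness of coefficients coming from the fact that $A$ is left $R$-free with basis $Mon(A)$. First I would fix $i$ and, for each nonzero $r\in R$, use (iii) to choose $c_{i,r}\in R-\{0\}$ with $x_ir-c_{i,r}x_i\in R$; I then \emph{define} $\sigma_i(r):=c_{i,r}$ and $\delta_i(r):=x_ir-c_{i,r}x_i\in R$, together with $\sigma_i(0):=0$ and $\delta_i(0):=0$. The key point making this well defined is that $1=x^0$ and $x_i=x^{e_i}$ are distinct elements of the basis $Mon(A)$, so the identity $x_ir=\sigma_i(r)\,x_i+\delta_i(r)$ (with $\delta_i(r)\in R=R\cdot 1$) determines both $\sigma_i(r)$ and $\delta_i(r)$ uniquely; in particular the value does not depend on the choice of $c_{i,r}$ allowed by (iii).

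Next I would show $\sigma_i$ is a ring endomorphism and $\delta_i$ a $\sigma_i$-derivation, in each case by computing $x_i(\,\cdot\,)$ in two ways and comparing coefficients with respect to $1$ and $x_i$. Writing $x_i(r+r')=x_ir+x_ir'$ and comparing with $x_i(r+r')=\sigma_i(r+r')x_i+\delta_i(r+r')$ gives additivity of both maps. Computing $x_i(rr')=(x_ir)r'=\bigl(\sigma_i(r)x_i+\delta_i(r)\bigr)r'=\sigma_i(r)(x_ir')+\delta_i(r)r'=\sigma_i(r)\sigma_i(r')x_i+\bigl(\sigma_i(r)\delta_i(r')+\delta_i(r)r'\bigr)$ and comparing with $x_i(rr')=\sigma_i(rr')x_i+\delta_i(rr')$ yields simultaneously $\sigma_i(rr')=\sigma_i(r)\sigma_i(r')$ and $\delta_i(rr')=\sigma_i(r)\delta_i(r')+\delta_i(r)r'$, the latter being exactly the $\sigma_i$-derivation identity. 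The unit axiom $\sigma_i(1)=1$ (and $\delta_i(1)=0$) follows from $x_i=x_i\cdot 1$. Finally, injectivity of $\sigma_i$ is immediate: if $r\neq 0$ then $\sigma_i(r)=c_{i,r}\in R-\{0\}$ by (iii), so $\ker\sigma_i=\{0\}$ and, $\sigma_i$ being additive, it is injective.

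There is no genuine obstacle here; the only points requiring care are the bookkeeping of the zero case (condition (iii) is stated only for $r\neq 0$, so the values at $0$ must be assigned by hand and checked to be consistent with the additive and multiplicative identities just derived) and making explicit that every "two ways of computing $x_i(\,\cdot\,)$" comparison is legitimate precisely because $A$ is free as a left $R$-module on $Mon(A)$, which turns equality of elements into equality of the coefficients of $1$ and of $x_i$.
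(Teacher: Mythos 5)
Your proof is correct and is the standard argument: the paper itself gives no proof of Proposition \ref{sigmadefinition} (it defers to \cite{Gallego2}), and the argument there is essentially yours — read $\sigma_i(r)$ and $\delta_i(r)$ off condition (iii), use freeness of $A$ over $R$ on $Mon(A)$ to get uniqueness of the coefficients of $x_i$ and $1$, and then compare the two expansions of $x_i(r+r')$ and $x_i(rr')$ to obtain additivity, multiplicativity and the $\sigma_i$-derivation rule, with injectivity coming from $c_{i,r}\neq 0$ for $r\neq 0$. Your attention to the $r=0$ case and to the role of the basis in justifying coefficient comparison is exactly the right bookkeeping.
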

\begin{proof}
See \cite{Gallego2}.
\end{proof}
A particular case of $\sigma-PBW$ extension is when all
derivations $\delta_i$ are zero. Another interesting case is when
all $\sigma_i$ are bijective. We have the following definition.
\begin{definition}\label{sigmapbwderivationtype}
Let $A$ be a $\sigma-PBW$ extension.
\begin{enumerate}
\item[\rm (a)]
$A$ is quasi-commutative if the conditions (iii) and (iv) in the
Definition \ref{gpbwextension} are replaced by
\begin{enumerate}
\item[\rm ($iii'$)]For every $1\leq i\leq n$ and $r\in R-\{0\}$ there exists $c_{i,r}\in R-\{0\}$ such that
\begin{equation}
x_ir=c_{i,r}x_i.
\end{equation}
\item[\rm ($iv'$)]For every $1\leq i,j\leq n$ there exists $c_{i,j}\in R-\{0\}$ such that
\begin{equation}
x_jx_i=c_{i,j}x_ix_j.
\end{equation}
\end{enumerate}
\item[\rm (b)]$A$ is bijective if $\sigma_i$ is bijective for
every $1\leq i\leq n$ and $c_{i,j}$ is invertible for any $1\leq
i<j\leq n$.
\end{enumerate}
\end{definition}
Some interesting examples of $\sigma-PBW$ extensions were given in
\cite{Gallego2}. We repeat next some of them without details.
\begin{example}\label{gpbwexample}
(i) Any $PBW$ extension (see \cite{Bell}) is a bijective
$\sigma-PBW$ extension.

(ii) Any skew polynomial ring $R[x;\sigma ,\delta]$, with $\sigma$
injective, is a $\sigma-PBW$ extension; in this case we have
$R[x;\sigma ,\delta]=\sigma(R)\langle x\rangle$. If additionally
$\delta=0$, then $R[x;\sigma]$ is quasi-commutative.

(iii) Any iterated skew polynomial ring $R[x_1;\sigma_1
,\delta_1]\cdots [x_n;\sigma_n ,\delta_n]$ is a $\sigma-PBW$
extension if it satisfies the following conditions:
\begin{center}
For $1\leq i\leq n$, $\sigma_i$ is injective.

For every $r\in R$ and $1\leq i\leq n$,
$\sigma_i(r),\delta_i(r)\in R$.

For $i<j$, $\sigma_j(x_i)=cx_i+d$, with $c,d\in R$,\ and $c$ has a
left inverse.

For $i<j$, $\delta_j(x_i)\in R+Rx_1+\cdots +Rx_i$.
\end{center}
Under these conditions we have
\begin{center}
$R[x_1;\sigma_1 ,\delta_1]\cdots [x_n;\sigma_n
,\delta_n]=\sigma(R)\langle x_1,\dots,x_n\rangle$.
\end{center}
In particular, any Ore algebra $K[t_1,\dots,t_m][x_1;\sigma_1
,\delta_1]\cdots [x_n;\sigma_n ,\delta_n]$ ($K$ a field) is a
$\sigma-PBW$ extension if it satisfies the following condition:
\begin{center}
For $1\leq i\leq n$, $\sigma_i$ is injective.
\end{center}
Some concrete examples of Ore algebras of injective type are the
following.

The algebra of shift operators: let $h\in K$, then the algebra of
shift operators is defined by $S_h:=K[t][x_h;\sigma_h,\delta_h]$,
where $\sigma_h(p(t)):=p(t-h)$, and $\delta_h:=0$ (observe that
$S_h$ can be considered also as a skew polynomial ring of
injective type). Thus, $S_h$ is a quasi-commutative bijective
$\sigma-PBW$ extension.

The mixed algebra $D_h$: let again $h\in K$, then the mixed
algebra $D_h$ is defined by
$D_h:=K[t][x;i_{K[t]},\frac{d}{dt}][x_h;\sigma_h,\delta_h]$, where
$\sigma_h(x):=x$. Then, $D_h$ is a quasi-commutative bijective
$\sigma-PBW$ extension.

The algebra for multidimensional discrete linear systems is
defined by
$D:=K[t_1,\dots,t_n][x_1,\sigma_1,0]\cdots[x_n;\sigma_n,0]$, where
\begin{center}
$\sigma_i(p(t_1,\dots,t_n)):=p(t_1,\dots,t_{i-1},t_{i}+1,t_{i+1},\dots,t_n),
\ \sigma_i(x_i)=x_i$, $1\leq i\leq n$.
\end{center}
$D$ is a quasi-commutative bijective $\sigma-PBW$ extension.

(iv) Additive analogue of the Weyl algebra: let $K$ be a field,
the $K$-algebra $A_n(q_1,\dots,q_n)$ is generated by
$x_1,\dots,x_n,y_1,\dots,y_n$ and subject to the relations:
\begin{center}
$x_jx_i = x_ix_j, y_jy_i = y_iy_j, \ 1 \leq i,j \leq n$,

$y_ix_j=x_jy_i, \ i\neq j$,

$y_ix_i = q_ix_iy_i + 1, \ 1\leq i\leq n$,
\end{center}
where $q_i\in K-\{0\}$. $A_n(q_1, \dots, q_n)$ satisfies the
conditions of (iii) and is bijective; we have
\begin{center}
$A_n(q_1, \dots, q_n)=\sigma(K[x_1,\dots,x_n])\langle
y_1,\dots,y_n\rangle$.
\end{center}

(v) Multiplicative analogue of the Weyl algebra: let $K$ be a
field, the $K$-algebra $\mathcal{O}_n(\lambda_{ji})$ is generated
by $x_1,\dots,x_n$ and subject to the relations:
\begin{center}
$x_jx_i =\lambda_{ji}x_ix_j ,\ 1\leq i<j\leq n$,
\end{center}
where $\lambda_{ji}\in K-\{0\}$. $\mathcal{O}_n(\lambda_{ji})$
satisfies the conditions of (iii), and hence
\begin{center}
$\mathcal{O}_n(\lambda_{ji})=\sigma(K[x_1])\langle
x_2,\dots,x_n\rangle$.
\end{center}
Note that $\mathcal{O}_n(\lambda_{ji})$ is quasi-commutative and
bijective.

(vi) $q$-Heisenberg algebra: let $K$ be a field , the $K$-algebra
$h_n(q)$ is generated by
$x_1,\dots,x_n,y_1,\dots,y_n,z_1,\dots,z_n$ and subject to the
relations:
\begin{center}
$x_jx_i = x_ix_j, z_jz_i=z_iz_j, y_jy_i = y_iy_j, \ 1 \leq i,j
\leq n$,

$z_jy_i=y_iz_j,z_jx_i=x_iz_j,y_jx_i=x_iy_j, \ i\neq j$,

$z_iy_i = qy_iz_i,z_ix_i=q^{-1}x_iz_i+y_i,y_ix_i = qx_iy_i, \
1\leq i\leq n$,
\end{center}
with $q\in K-\{0\}$. $h_n(q)$ is a bijective $\sigma-PBW$
extension of $K$:
\begin{center}
$h_n(q)=\sigma(K)\langle
x_1,\dots,x_n;y_1,\dots,y_n;z_1,\dots,z_n\rangle$.
\end{center}
(vi) Many other examples are presented in \cite{Lezama3}.
\end{example}
\begin{definition}
Let $A$ be a $\sigma-PBW$ extension of $R$ with endomorphisms
$\sigma_i$, $1\leq i\leq n$, as in Proposition
\ref{sigmadefinition}.
\begin{enumerate}
\item[\rm (i)]For $\alpha=(\alpha_1,\dots,\alpha_n)\in \mathbb{N}^n$,
$\sigma^{\alpha}:=\sigma_1^{\alpha_1}\cdots \sigma_n^{\alpha_n}$,
$|\alpha|:=\alpha_1+\cdots+\alpha_n$. If
$\beta=(\beta_1,\dots,\beta_n)\in \mathbb{N}^n$, then
$\alpha+\beta:=(\alpha_1+\beta_1,\dots,\alpha_n+\beta_n)$.
\item[\rm (ii)]For $X=x^{\alpha}\in Mon(A)$,
$\exp(X):=\alpha$ and $\deg(X):=|\alpha|$.
\item[\rm (iii)]Let $0\neq f\in A$, $t(f)$ is the finite
set of terms that conform $f$, i.e., if $f=c_1X_1+\cdots +c_tX_t$,
with $X_i\in Mon(A)$ and $c_i\in R-\{0\}$, then
$t(f):=\{c_1X_1,\dots,c_tX_t\}$.
\item[\rm (iv)]Let $f$ be as in {\rm(iii)}, then $\deg(f):=\max\{\deg(X_i)\}_{i=1}^t.$
\end{enumerate}
\end{definition}
The $\sigma-PBW$ extensions can be characterized in a similar way
as was done in \cite{Gomez-Torrecillas} for $PBW$ rings.
\begin{theorem}\label{coefficientes}
Let $A$ be a left polynomial ring over $R$ w.r.t
$\{x_1,\dots,x_n\}$. $A$ is a $\sigma-PBW$ extension of $R$ if and
only if the following conditions hold:
\begin{enumerate}
\item[\rm (a)]For every $x^{\alpha}\in Mon(A)$ and every $0\neq
r\in R$ there exists unique elements
$r_{\alpha}:=\sigma^{\alpha}(r)\in R-\{0\}$ and $p_{\alpha ,r}\in
A$ such that
\begin{equation}\label{611}
x^{\alpha}r=r_{\alpha}x^{\alpha}+p_{\alpha , r},
\end{equation}
where $p_{\alpha ,r}=0$ or $\deg(p_{\alpha ,r})<|\alpha|$ if
$p_{\alpha , r}\neq 0$. Moreover, if $r$ is left invertible, then
$r_\alpha$ is left invertible.

\item[\rm (b)]For every $x^{\alpha},x^{\beta}\in Mon(A)$ there
exist unique elements $c_{\alpha,\beta}\in R$ and
$p_{\alpha,\beta}\in A$ such that
\begin{equation}\label{612}
x^{\alpha}x^{\beta}=c_{\alpha,\beta}x^{\alpha+\beta}+p_{\alpha,\beta},
\end{equation}
where $c_{\alpha,\beta}$ is left invertible, $p_{\alpha,\beta}=0$
or $\deg(p_{\alpha,\beta})<|\alpha+\beta|$ if
$p_{\alpha,\beta}\neq 0$.
\end{enumerate}
\end{theorem}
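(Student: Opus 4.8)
The statement is an "if and only if" characterizing $\sigma$-PBW extensions among left polynomial rings via the two normal-form identities (\ref{611}) and (\ref{612}). I would prove both directions by induction on $|\alpha|$ (and on $|\beta|$ for part (b)), bootstrapping from the defining relations (\ref{sigmadefinicion1}) and (\ref{sigmadefinicion2}), which are exactly the case $|\alpha|=1$.

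For the forward direction ($\Rightarrow$), assume $A=\sigma(R)\langle x_1,\dots,x_n\rangle$. For (a), I first note that Proposition \ref{sigmadefinition} gives $x_ir=\sigma_i(r)x_i+\delta_i(r)$, so the claim holds for $|\alpha|=1$ with $r_{\alpha}=\sigma_i(r)$ and $p_{\alpha,r}=\delta_i(r)\in R$, which is $0$ or has degree $0<1$. For the inductive step, write $x^{\alpha}=x_i x^{\alpha'}$ with $|\alpha'|=|\alpha|-1$; apply the induction hypothesis to get $x^{\alpha'}r=\sigma^{\alpha'}(r)x^{\alpha'}+p_{\alpha',r}$, then multiply on the left by $x_i$ and push $x_i$ past $\sigma^{\alpha'}(r)$ using the $|\alpha|=1$ case. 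One obtains $x^{\alpha}r=\sigma_i(\sigma^{\alpha'}(r))x_ix^{\alpha'}+\bigl(\delta_i(\sigma^{\alpha'}(r))x^{\alpha'}+x_i p_{\alpha',r}\bigr)$. The leading coefficient is $\sigma^{\alpha}(r)$ (using $\sigma^{\alpha}=\sigma_i\circ\sigma^{\alpha'}$ when the variable is $x_i$; one should order the factors compatibly, or absorb the correction into the remainder — this is the bookkeeping point to be careful with). Since $\sigma_i$ is injective and $r\neq 0$, $\sigma^{\alpha}(r)\neq 0$. The remainder term $x_i p_{\alpha',r}$ has degree $\le 1+\deg(p_{\alpha',r})\le |\alpha'|=|\alpha|-1<|\alpha|$ after one more reduction step, and $\delta_i(\sigma^{\alpha'}(r))x^{\alpha'}$ has degree $|\alpha'|<|\alpha|$; so $p_{\alpha,r}$ has degree $<|\alpha|$ or is $0$. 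The left-invertibility statement propagates because $\sigma_i$ is a ring homomorphism, so $\sigma_i(s)\sigma_i(r)=\sigma_i(sr)=\sigma_i(1)=1$ whenever $sr=1$. Uniqueness follows because $Mon(A)$ is an $R$-basis. Part (b) is analogous: the case $|\alpha|=|\beta|=1$ is relation (\ref{sigmadefinicion2}), and one inducts, multiplying by a single variable and reducing via (a) and the $|\alpha|=|\beta|=1$ case; the coefficient $c_{\alpha,\beta}$ is built as a product of the $c_{i,j}$'s and of $\sigma$-images thereof, hence left invertible, while all error terms drop in degree.

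For the converse ($\Leftarrow$), suppose the left polynomial ring $A$ satisfies (a) and (b). Condition (i) of Definition \ref{gpbwextension} is $R\subseteq A$, given; (ii) is the hypothesis that $A$ is a left polynomial ring over $R$ w.r.t. $\{x_1,\dots,x_n\}$. For (iii): specialize (a) to $\alpha=e_i$ (the $i$-th standard basis vector), so $x_ir=\sigma^{e_i}(r)x_i+p_{e_i,r}$ with $p_{e_i,r}=0$ or $\deg<1$, i.e. $p_{e_i,r}\in R$; setting $c_{i,r}:=\sigma^{e_i}(r)\in R-\{0\}$ gives $x_ir-c_{i,r}x_i=p_{e_i,r}\in R$, which is (\ref{sigmadefinicion1}). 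For (iv): specialize (b) to $\alpha=e_j$, $\beta=e_i$, giving $x_jx_i=c_{e_j,e_i}x_{e_j+e_i}+p_{e_j,e_i}$; here $x^{e_j+e_i}=x_ix_j$ (since $e_j+e_i=e_i+e_j$ and standard monomials are written in increasing index order), and $p_{e_j,e_i}=0$ or has degree $<2$, so lies in $R+Rx_1+\cdots+Rx_n$; thus $x_jx_i-c_{i,j}x_ix_j\in R+Rx_1+\cdots+Rx_n$ with $c_{i,j}:=c_{e_j,e_i}\in R-\{0\}$, which is (\ref{sigmadefinicion2}). Hence $A$ is a $\sigma$-PBW extension.

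**Main obstacle.** The only genuinely delicate point is the index-ordering bookkeeping in the inductive steps of the forward direction: a standard monomial $x^{\alpha}$ factors as $x_i\cdot x^{\alpha'}$ only when $i$ is the smallest index appearing, so either one sets up the induction to strip off the smallest-index variable (and then $\sigma^{\alpha}=\sigma_i\sigma^{\alpha'}$ with the factors commuting in the right way), or one must track that commuting the $\sigma_i$'s past each other only changes lower-degree remainder terms, not the leading coefficient. I would handle this by inducting on $|\alpha|$ while always extracting $x_1^{\alpha_1}$ first (writing $x^{\alpha}=x_1^{\alpha_1}\cdot x'$ and then recursing inside $x'$), which keeps the $\sigma^{\alpha}$ identity clean. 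Everything else — injectivity giving nonzero leading coefficients, ring-homomorphism giving propagation of left invertibility, and the $R$-basis property giving uniqueness — is routine. Since the paper explicitly says most proofs are adapted from \cite{Gallego2}, I expect the author to cite that reference for the details and only indicate the inductive skeleton.
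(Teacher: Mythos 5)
The paper itself gives no argument for Theorem \ref{coefficientes}: it simply cites \cite{Gallego2}. Your plan (induct on $|\alpha|$, stripping off the variable of smallest index so that $\sigma^{\alpha}=\sigma_i\circ\sigma^{\alpha'}$ holds exactly; bootstrap from the degree-one relations (\ref{sigmadefinicion1}) and (\ref{sigmadefinicion2}) via Proposition \ref{sigmadefinition}; get uniqueness from the $R$-basis property of $Mon(A)$; and recover conditions (iii) and (iv) of Definition \ref{gpbwextension} in the converse from the cases $|\alpha|=1$ and $|\alpha|=|\beta|=1$ of (a) and (b)) is exactly the expected adaptation of the argument in that reference, and the converse, the nonvanishing of $\sigma^{\alpha}(r)$, and the degree bookkeeping are all handled correctly.

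There is, however, one genuine gap: the left invertibility of $c_{\alpha,\beta}$ in (b). You justify it by saying $c_{\alpha,\beta}$ is a product of $\sigma$-images of the $c_{i,j}$, ``hence left invertible'', but Definition \ref{gpbwextension}(iv) only requires $c_{i,j}\in R-\{0\}$; left invertibility of the $c_{i,j}$ is not among the axioms (compare Definition \ref{sigmapbwderivationtype}(b), where invertibility of the $c_{i,j}$ is an additional hypothesis defining the bijective case), so it must itself be proved. It does follow, because (iv) is imposed for every ordered pair: for $i<j$ one has $x_jx_i=c_{i,j}x_ix_j+\ell$ and $x_ix_j=c_{j,i}x_jx_i+\ell'$ with $\ell,\ell'$ of degree at most one; substituting the first relation into the second and comparing the coefficient of the standard monomial $x_ix_j$ in the basis $Mon(A)$ gives $c_{j,i}c_{i,j}=1$ (and similarly $c_{i,i}=1$). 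Once this is in place, and using that each $\sigma_i$, being a unital ring endomorphism, sends left-invertible elements to left-invertible elements, your product description of $c_{\alpha,\beta}$ does yield its left invertibility, and the remaining implicit step of your induction (that left multiplication by $x_i$ raises degrees by at most one, which follows from (iii) and (iv) by a similar induction) goes through.
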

\begin{proof}
See \cite{Gallego2}.
\end{proof}
\begin{remark}\label{identities}
(i) A left inverse of $c_{\alpha,\beta}$ will be denoted by
$c_{\alpha,\beta}'$. We observe that if $\alpha=0$ or $\beta=0$,
then $c_{\alpha,\beta}=1$ and hence $c_{\alpha,\beta}'=1$.

(ii) Let $\theta,\gamma,\beta\in \mathbb{N}^n$ and $c\in R$, then
we it is easy to check the following identities:
\begin{center}
$\sigma^\theta(c_{\gamma,\beta})c_{\theta,\gamma+\beta}=c_{\theta,\gamma}c_{\theta+\gamma,\beta}$,

$\sigma^\theta(\sigma^\gamma
(c))c_{\theta,\gamma}=c_{\theta,\gamma}\sigma^{\theta+\gamma}(c)$.
\end{center}

(iii) We observe if $A$ is a $\sigma-PBW$ extension
quasi-commutative, then from the proof of Theorem
\ref{coefficientes} (see \cite{Gallego2}) we conclude that
$p_{\alpha,r}=0$ and $p_{\alpha,\beta}=0$, for every $0\neq r\in
R$ and every $\alpha,\beta \in \mathbb{N}^n$.

(iv) We have also that if $A$ is a bijective $\sigma-PBW$
extension, then $c_{\alpha,\beta}$ is invertible for any
$\alpha,\beta\in \mathbb{N}^n$.
\end{remark}
A key property of $\sigma$-PBW extensions is the content of the
following theorem.
\begin{theorem}\label{hilbertbasisforgpbw}
Let $A$ be a bijective skew $PBW$ extension of $R$. If $R$ is a
left Noetherian ring then $A$ is also a left Noetherian ring.
\end{theorem}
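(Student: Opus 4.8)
The plan is to reduce to the quasi-commutative case by passing to the associated graded ring, and to treat the quasi-commutative bijective case as an iterated skew polynomial ring of automorphism type.

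First I would equip $A$ with its standard filtration: $F_m:=0$ for $m<0$ and $F_m:=\bigoplus_{|\alpha|\leq m}Rx^{\alpha}$ for $m\geq 0$. By Theorem \ref{coefficientes}, the elements $x^{\alpha}r-\sigma^{\alpha}(r)x^{\alpha}$ and $x^{\alpha}x^{\beta}-c_{\alpha,\beta}x^{\alpha+\beta}$ lie in $F_{|\alpha|-1}$ and $F_{|\alpha+\beta|-1}$ respectively, so $\{F_m\}$ is an exhaustive positive ring filtration of $A$ with $F_0=R$. Let $\mathrm{gr}(A):=\bigoplus_{m\geq 0}F_m/F_{m-1}$ be the associated graded ring and write $\overline{x_i}$ for the image of $x_i$ in $F_1/F_0$. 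Passing to $\mathrm{gr}(A)$ kills the lower-degree terms, hence $\overline{x_i}\,r=\sigma_i(r)\,\overline{x_i}$ and $\overline{x_j}\,\overline{x_i}=c_{i,j}\,\overline{x_i}\,\overline{x_j}$; thus $\mathrm{gr}(A)$ is a quasi-commutative $\sigma-PBW$ extension of $R$ (cf. Remark \ref{identities}(iii)), and it is bijective since $A$ is.

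Next I would prove that a quasi-commutative bijective extension $B=\sigma(R)\langle x_1,\dots,x_n\rangle$ is left Noetherian whenever $R$ is. Fixing the order $x_1,\dots,x_n$, the relations $x_i r=\sigma_i(r)x_i$ and $x_jx_i=c_{i,j}x_ix_j$ (for $i<j$) exhibit $B$ as an iterated Ore extension $R[x_1;\theta_1][x_2;\theta_2]\cdots[x_n;\theta_n]$ in which each $\theta_i$ restricts to $\sigma_i$ on $R$ and sends $x_k\mapsto c_{k,i}x_k$ for $k<i$; bijectivity of $\sigma_i$ together with invertibility of the $c_{k,i}$ guarantees that each $\theta_i$ is a ring automorphism, the compatibilities needed being exactly those of Remark \ref{identities}(ii). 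By the classical skew Hilbert basis theorem, $S[x;\theta]$ is left Noetherian whenever $S$ is left Noetherian and $\theta\in\mathrm{Aut}(S)$; iterating $n$ times shows $B$ is left Noetherian.

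Finally, I would invoke the standard transfer principle for filtered rings: if $A$ carries an exhaustive filtration with $F_{-1}=0$ whose associated graded ring is left Noetherian, then $A$ is left Noetherian. For a left ideal $I\subseteq A$ one forms the graded ideal of leading forms in $\mathrm{gr}(A)$, picks finitely many elements of $I$ whose leading forms generate that ideal, and checks by induction on filtration degree that these elements generate $I$. Applying this to the filtration of the first paragraph, whose associated graded ring is left Noetherian by the second and third paragraphs, yields that $A$ is left Noetherian. I expect the middle step to be the main obstacle: realizing the quasi-commutative bijective extension explicitly as an iterated skew polynomial ring and verifying that each twisting map is a well-defined automorphism of the ring constructed so far; once this is done, the outer filtration argument is routine.
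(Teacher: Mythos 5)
Your proposal is correct, and it is essentially the proof the paper relies on: the paper itself gives no argument beyond citing \cite{Lezama3}, where the result is proved exactly this way (standard degree filtration, $\mathrm{gr}(A)$ a quasi-commutative bijective $\sigma$-PBW extension realized as an iterated skew polynomial ring of automorphism type via the identities of Remark \ref{identities}, the skew Hilbert basis theorem, and the filtered--graded transfer of the Noetherian property). So there is nothing to fix; your reconstruction matches the cited proof in both structure and the key intermediate steps.
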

\begin{proof}
See \cite{Lezama3}.
\end{proof}
Let $A=\sigma(R)\langle x_1, \dots , x_n\rangle$ be a $\sigma-PBW$
extension of $R$ and let $\succeq$ be a total order defined on
$Mon(A)$. If $x^{\alpha}\succeq x^{\beta}$ but $x^{\alpha}\neq
x^{\beta}$ we will write $x^{\alpha}\succ x^{\beta}$. Let $f\neq
0$ be a polynomial of $A$, if
\begin{center}
$f=c_1X_1+\cdots +c_tX_t$,
\end{center}
with $c_i\in R-\{0\}$ and $X_1\succ \cdots \succ X_t$ are the
monomials of $f$, then $lm(f):=X_1$ is the \textit{leading
monomial} of $f$, $lc(f):=c_1$ is the \textit{leading coefficient}
of $f$ and $lt(f):=c_1X_1$ is the \textit{leading term} of $f$. If
$f=0$, we define $lm(0):=0,lc(0):=0,lt(0):=0$, and we set $X\succ
0$ for any $X\in Mon(A)$. Thus, we extend $\succeq$ to $Mon(A)\cup
\{0\}$.

\begin{definition}\label{monomialorder}
Let $\succeq$ be a total order on $Mon(A)$, we say that $\succeq$
is a monomial order on $Mon(A)$ if the following conditions hold:
\begin{enumerate}
\item[\rm (i)]For every $x^{\beta},x^{\alpha},x^{\gamma},x^{\lambda}\in Mon(A)$
\begin{center}
$x^{\beta}\succeq x^{\alpha}$ $\Rightarrow$
$lm(x^{\gamma}x^{\beta}x^{\lambda})\succeq
lm(x^{\gamma}x^{\alpha}x^{\lambda})$.
\end{center}

\item[\rm (ii)]$x^{\alpha}\succeq 1$, for every $x^{\alpha}\in
Mon(A)$.
\item[\rm (iii)]$\succeq$ is degree compatible, i.e., $|\beta|\geq |\alpha|\Rightarrow x^{\beta}\succeq
x^{\alpha}$.
\end{enumerate}
\end{definition}
Monomial orders are also called \textit{admissible orders}. From
now on we will assume that $Mon(A)$ is endowed with some monomial
order.
\begin{definition}
Let $x^{\alpha},x^{\beta}\in Mon(A)$, we say that $x^{\alpha}$
divides $x^{\beta}$, denoted by $x^{\alpha}|x^{\beta}$, if there
exists $x^{\gamma},x^{\lambda}\in Mon(A)$ such that
$x^{\beta}=lm(x^{\gamma}x^{\alpha}x^{\lambda})$.
\end{definition}
\begin{proposition}
Let $x^{\alpha},x^{\beta}\in Mon(A)$ and $f,g\in A-\{0\}$. Then,
\begin{enumerate}
\item[\rm (a)]
$lm(x^{\alpha}g)=lm(x^{\alpha}lm(g))=x^{\alpha+\exp(lm(g))}$. In
particular,
\begin{center}
$lm(lm(f)lm(g))=x^{\exp(lm(f))+\exp(lm(g))}$
\end{center}
and
\begin{equation}
lm(x^{\alpha}x^{\beta})=x^{\alpha+\beta}.\label{divrelation}
\end{equation}
\item[\rm (b)]The
following conditions are equivalent:
\begin{enumerate}
\item[\rm (i)]$x^{\alpha}|x^{\beta}$.
\item[\rm (ii)]There exists a unique $x^{\theta}\in Mon(A)$ such that
$x^{\beta}=lm(x^{\theta}x^{\alpha})=x^{\theta+\alpha}$ and hence
$\beta=\theta+\alpha$.
\item[\rm (iii)]There exists a unique $x^{\theta}\in Mon(A)$ such that
$x^{\beta}=lm(x^{\alpha}x^{\theta})=x^{\alpha+\theta}$ and hence
$\beta=\alpha+\theta$.
\item[\rm (iv)]$\beta_i\geq \alpha_i$ for $1\leq i\leq n$, with
$\beta:=(\beta_1,\dots,\beta_n)$ and
$\alpha:=(\alpha_1,\dots,\alpha_n)$.
\end{enumerate}
\end{enumerate}
\end{proposition}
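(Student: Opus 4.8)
The plan is to obtain both parts as essentially immediate consequences of Theorem \ref{coefficientes} combined with the degree-compatibility of the monomial order (Definition \ref{monomialorder}(iii)).

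\textbf{Part (a).} First I would write $g=c_1Y_1+\cdots+c_sY_s$ with $c_i\in R-\{0\}$ and $Y_1\succ\cdots\succ Y_s$, so that $lm(g)=Y_1$ and $\exp(lm(g))=\exp(Y_1)$. Expanding $x^{\alpha}g=\sum_{i=1}^{s}(x^{\alpha}c_i)Y_i$ and applying part (a) of Theorem \ref{coefficientes} to $x^{\alpha}c_i$ and part (b) to $x^{\alpha}Y_i=x^{\alpha}x^{\exp(Y_i)}$, one gets
\[
x^{\alpha}c_iY_i=\sigma^{\alpha}(c_i)\,c_{\alpha,\exp(Y_i)}\,x^{\alpha+\exp(Y_i)}+q_i ,
\qquad q_i:=\sigma^{\alpha}(c_i)p_{\alpha,\exp(Y_i)}+p_{\alpha,c_i}Y_i ,
\]
where every monomial of $q_i$ has degree strictly smaller than $|\alpha|+\deg(Y_i)=|\alpha+\exp(Y_i)|$ (using $\deg(hk)\leq\deg(h)+\deg(k)$ together with the degree bounds on $p_{\alpha,c_i}$ and $p_{\alpha,\exp(Y_i)}$). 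By degree-compatibility each such monomial is $\prec x^{\alpha+\exp(Y_i)}$, hence $lm(x^{\alpha}c_iY_i)=x^{\alpha+\exp(Y_i)}$. Next, from $Y_1\succ Y_i$ and Definition \ref{monomialorder}(i) (with $x^{\gamma}=x^{\alpha}$ and $x^{\lambda}=1$) we get $x^{\alpha+\exp(Y_1)}=lm(x^{\alpha}Y_1)\succeq lm(x^{\alpha}Y_i)=x^{\alpha+\exp(Y_i)}$, and the inequality is strict since $\exp(Y_1)\neq\exp(Y_i)$. Also $\deg(Y_i)\leq\deg(Y_1)$ for all $i$ (otherwise degree-compatibility would give $Y_i\succ Y_1$), so no monomial of any $q_i$ can equal $x^{\alpha+\exp(Y_1)}$. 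Therefore $x^{\alpha+\exp(Y_1)}$ is the unique $\succeq$-largest monomial that can occur in $x^{\alpha}g$, and its coefficient is exactly $\sigma^{\alpha}(c_1)c_{\alpha,\exp(Y_1)}$; this is nonzero because $\sigma^{\alpha}(c_1)\neq 0$ by Theorem \ref{coefficientes}(a), $c_{\alpha,\exp(Y_1)}$ is left invertible, and over a left Noetherian (in particular $LGS$) ring right multiplication by a left-invertible element is injective. Hence $lm(x^{\alpha}g)=x^{\alpha+\exp(lm(g))}$. Running the same computation with $g$ replaced by the single monomial $lm(g)$ gives $lm(x^{\alpha}lm(g))=x^{\alpha+\exp(lm(g))}$ as well, and the two displayed special cases follow by taking $g=x^{\beta}$ (giving (\ref{divrelation})) and by taking $x^{\alpha}=lm(f)$ with $g$ a monomial.

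\textbf{Part (b).} I would prove the cycle (i)$\Rightarrow$(iv)$\Rightarrow$(ii)$\Rightarrow$(iii)$\Rightarrow$(i). For (i)$\Rightarrow$(iv): if $x^{\beta}=lm(x^{\gamma}x^{\alpha}x^{\lambda})$, then two applications of part (a) yield $x^{\beta}=x^{\gamma+\alpha+\lambda}$, so $\beta=\gamma+\alpha+\lambda$ and $\beta_i\geq\alpha_i$ for all $i$. For (iv)$\Rightarrow$(ii): set $\theta:=\beta-\alpha\in\mathbb{N}^n$; by (\ref{divrelation}) we have $lm(x^{\theta}x^{\alpha})=x^{\theta+\alpha}=x^{\beta}$, and uniqueness holds since $lm(x^{\theta'}x^{\alpha})=x^{\beta}$ forces $\theta'+\alpha=\beta$. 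Then (ii)$\Rightarrow$(iii) is immediate because $\mathbb{N}^n$ is commutative: the same $\theta$ satisfies $x^{\beta}=x^{\alpha+\theta}=lm(x^{\alpha}x^{\theta})$; and (iii)$\Rightarrow$(i) is trivial by taking $x^{\gamma}=1$, $x^{\lambda}=x^{\theta}$.

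\textbf{Where the work is.} The only genuinely delicate point is the last nonvanishing assertion in part (a): that the candidate leading coefficient $\sigma^{\alpha}(c_1)c_{\alpha,\exp(Y_1)}$ is not zero. This is exactly the place where one must use both the left-invertibility supplied by Theorem \ref{coefficientes} and a non-degeneracy property of the base ring $R$ (left Noetherianity/the $LGS$ hypothesis). Everything else is degree bookkeeping, made routine by degree-compatibility of $\succeq$, and part (b) is then a purely combinatorial translation into exponent vectors.
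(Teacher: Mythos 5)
Your proof is correct, and it follows the standard route that the paper itself delegates to \cite{Gallego2}: expand $x^{\alpha}g$ term by term using Theorem \ref{coefficientes}, kill the lower-order contributions by degree compatibility, and then translate everything into exponent arithmetic for part (b). The only genuinely delicate point is the one you flagged, namely that $\sigma^{\alpha}(lc(g))c_{\alpha,\exp(lm(g))}\neq 0$ (left invertibility of $c_{\alpha,\exp(lm(g))}$ alone does not rule out its being a right zero divisor), and your fix via left Noetherianity of $R$ --- right multiplication by a left-invertible element is a surjective, hence injective, endomorphism of the Noetherian left module $R$ --- is legitimate under the paper's standing $LGS$ hypothesis (and is not needed at all in the bijective or quasi-commutative cases, where $c_{\alpha,\beta}$ is invertible).
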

\begin{proof}
See \cite{Gallego2}.
\end{proof}
We note that a least common multiple of monomials of $Mon(A)$
there exists: in fact, let $x^{\alpha},x^{\beta}\in Mon(A)$, then
$lcm(x^{\alpha},x^{\beta})=x^{\gamma}\in Mon(A)$, where
$\gamma=(\gamma_1,\dots,\gamma_n)$ with
$\gamma_i:=\max\{\alpha_i,\beta_i\}$ for each $1\leq i\leq n$.

\bigskip

Some natural computational conditions on $R$ will be assumed in
the rest of this paper (compare with \cite{Lezama2}).
\begin{definition}\label{LGSring}
A ring $R$ is left Gröbner soluble {\rm(}$LGS${\rm)} if the
following conditions hold:
\begin{enumerate}
\item[\rm (i)]$R$ is left Noetherian.
\item[\rm (ii)]Given $a,r_1,\dots,r_m\in R$ there exists an
algorithm which decides whether $a$ is in the left ideal
$Rr_1+\cdots+Rr_m$, and if so, find $b_1,\dots,b_m\in R$ such that
$a=b_1r_1+\cdots+b_mr_m$.
\item[\rm (iii)]Given $r_1,\dots,r_m\in R$ there exists an
algorithm which finds a finite set of generators of the left
$R$-module
\begin{center}
$Syz_R[r_1\ \cdots \ r_m]:=\{(b_1,\dots,b_m)\in
R^m|b_1r_1+\cdots+b_mr_m=0\}$.
\end{center}
\end{enumerate}
\end{definition}
The three above conditions imposed to $R$ are needed in order to
guarantee a Gröbner theory in the rings of coefficients, in
particular, to have an effective solution of the membership
problem in $R$ (see (ii) in Definition \ref{reductionformodules}
below). From now on we will assume that $A=\sigma(R)\langle
x_1,\dots,x_n\rangle$ is a $\sigma-PBW$ extension of $R$, where
$R$ is a $LGS$ ring and $Mon(A)$ is endowed with some monomial
order.

\bigskip

We conclude this chapter with a remark about some other classes of
noncommutative rings of polynomial type close related with
$\sigma$-PBW extensions.
\begin{remark}
(i) Viktor Levandovskyy has defined in \cite{Levandovskyy} the
$G$-algebras and has constructed the theory of Gröbner bases for
them. Let $K$ be a field, a $K$-algebra $A$ is called a
\textit{$G$-algebra} if $K\subset Z(A)$ (center of $A$) and $A$ is
generated by a finite set $\{x_1, \ldots, x_n\}$ of elements that
satisfy the following conditions: (a) the collection of standard
monomials of $A$, $Mon(A)=Mon(\{x_1, \ldots, x_n\})$, is a
$K$-basis of $A$. (b) $x_jx_i= c_{ij}x_ix_j+d_{ij}$, for $1 \leq i
< j \leq n$, with $c_{ij} \in K^*$ and $d_{ij}\in A$. (c) There
exists a total order $<_A$ on $Mon(A)$ such that for $i<j$,
$lm(d_{ij}) <_A x_ix_j$. (d) For $1 \leq i < j < k \leq n$,
$c_{ik}c_{jk}d_{ij}x_k - x_kd_{ij} + c_{jk}x_jd_{ik} -
c_{ij}d_{ik}x_j + d_{jk}x_i - c_{ij}c_{ik}x_id_{jk}=0$. According
to this definition, the coefficients of a polynomial in a
$G$-algebra are in a field and they commute with the variables
$x_1,\dots,x_n$. From this, and also from (c) and (d), we conclude
that the class of $G$-algebras does not coincide with the class of
$\sigma$-PBW extensions. However, the intersection of these two
classes of rings is not empty. In fact, the universal enveloping
algebra of a finite dimensional Lie algebra, Weyl algebras and the
additive or multiplicative analogue of a Weyl algebra, are
$G$-algebras and also $\sigma$-PBW extensions.

(ii) A similar remark can be done with respect to $PBW$ rings and
algebras defined by Bueso, Gómez-Torrecillas and Verschoren in
\cite{Gomez-Torrecillas2}.
\end{remark}

\section{Monomial orders on $Mon(A^m)$}
We will often write the elements of $A^m$ also as row vectors if
this not represent confusion. We recall that the canonical basis
of $A^m$ is
\begin{center}
$\textbf{\emph{e}}_1=(1,0,\dots
,0),\textbf{\emph{e}}_2=(0,1,0,\dots, 0),\dots
,\textbf{\emph{e}}_m=(0,0,\dots ,1)$.
\end{center}
\begin{definition}
A monomial in $A^m$ is a vector $\textbf{X}=X\textbf{e}_i$, where
$X=x^{\alpha}\in Mon(A)$ and $1\leq i\leq m$, i.e.,
\begin{center}
$\textbf{X}=X\textbf{e}_i=(0,\dots ,X,\dots ,0)$,
\end{center}
where $X$ is in the $i$-th position, named the index of
$\textbf{X}$, $ind(\textbf{X}):=i$. A term is a vector
$c\textbf{X}$, where $c\in R$. The set of monomials of $A^m$ will
be denoted by $\textrm{Mon}(A^m)$. Let
$\textbf{Y}=Y\textbf{e}_j\in Mon(A^m)$, we say that $\textbf{X}$
divides $\textbf{Y}$ if $i=j$ and $X$ divides $Y$. We will say
that any monomial $\textbf{X}\in Mon(A^m)$ divides the null vector
$\textbf{\emph{0}}$. The least common multiple of $\textbf{X}$ and
$\textbf{Y}$, denoted by $lcm(\textbf{X},\textbf{Y})$, is
$\textbf{\emph{0}}$ if $i\neq j$, and $U\textbf{e}_i$, where
$U=lcm(X,Y)$, if $i=j$. Finally, we define
$\exp(\textbf{X}):=\exp(X)=\alpha$ and
$\deg(\textbf{X}):=\deg(X)=|\alpha|$.
\end{definition}
We now define monomials orders on $Mon(A^m)$.
\begin{definition}
A monomial order on $Mon(A^m)$ is a total order $\succeq$
satisfying the following three conditions:
\begin{enumerate}
\item[\rm(i)] $lm(x^{\beta}x^{\alpha})\textbf{e}_{i}\succeq x^{\alpha}\textbf{e}_{i}$, for every
monomial $\textbf{X}=x^{\alpha}\textbf{e}_{i}\in Mon(A^{m})$ and
any monomial $x^{\beta}$ in $Mon(A)$.
\item[\rm(ii)] If $\textbf{Y}=x^{\beta}\textbf{e}_{j}\succeq \textbf{X}=x^{\alpha}\textbf{e}_{i}$, then
$lm(x^{\gamma}x^{\beta})\textbf{e}_{j}\succeq
lm(x^{\gamma}x^{\alpha})\textbf{e}_{i}$ for all
$\textbf{X},\textbf{Y}\in Mon(A^m)$ and every $x^{\gamma}\in
Mon(A)$.
\item[\rm (iii)]$\succeq$ is degree compatible, i.e., $\deg(\textbf{X})\geq \deg(\textbf{Y})\Rightarrow \textbf{X}\succeq
\textbf{Y}$.
\end{enumerate}
If $\textbf{X}\succeq \textbf{Y}$ but $\textbf{X}\neq \textbf{Y}$
we will write $\textbf{X}\succ \textbf{Y}$.
$\textbf{Y}\preceq\textbf{X}$ means that
$\textbf{X}\succeq\textbf{Y}$.
\end{definition}
\begin{proposition}\label{wellorder}
Every monomial order on $Mon(A^m)$ is a well order.
\end{proposition}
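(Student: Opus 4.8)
The plan is to show that $\succeq$ admits no infinite strictly decreasing sequence of monomials; since $\succeq$ is a total order, this is equivalent to the assertion that every nonempty subset of $Mon(A^m)$ has a least element, i.e.\ that $\succeq$ is a well order. The whole argument rests on the degree-compatibility axiom (iii); axioms (i) and (ii) play no role.

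First I would extract from (iii) the implication: if $\deg(\textbf{X})>\deg(\textbf{Y})$, then $\textbf{X}\succ\textbf{Y}$. Indeed, $\deg(\textbf{X})\geq\deg(\textbf{Y})$ gives $\textbf{X}\succeq\textbf{Y}$ by (iii), and $\textbf{X}\neq\textbf{Y}$ because the two monomials have different degrees. Contrapositively, if $\textbf{Y}\succ\textbf{X}$, then $\deg(\textbf{X})\leq\deg(\textbf{Y})$. Hence, if $\textbf{X}_1\succ\textbf{X}_2\succ\textbf{X}_3\succ\cdots$ were an infinite strictly decreasing chain, transitivity would give $\textbf{X}_1\succ\textbf{X}_k$ for all $k$, so $\deg(\textbf{X}_k)\leq\deg(\textbf{X}_1)=:d$ for every $k$, and the whole chain would lie inside the set $\{\textbf{X}\in Mon(A^m):\deg(\textbf{X})\leq d\}$.

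Next I would observe that this set is finite: a monomial of $A^m$ has the form $x^{\alpha}\textbf{e}_i$ with $\alpha\in\mathbb{N}^n$ and $1\leq i\leq m$, and $\{\alpha\in\mathbb{N}^n:|\alpha|\leq d\}$ is finite, so there are only finitely many monomials of degree at most $d$. A strictly decreasing sequence inside a finite set cannot be infinite, a contradiction. Therefore $Mon(A^m)$ has no infinite strictly decreasing chain, and translating this back (start with an arbitrary element of a nonempty subset $S$; if it is not the least, descend to a strictly smaller element of $S$; the descent must terminate, and by totality the stopping point is the least element of $S$) shows that $\succeq$ is a well order.

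I do not expect a genuine obstacle here; the only points requiring a little care are the correct one-line deduction of ``$\deg(\textbf{X})>\deg(\textbf{Y})\Rightarrow\textbf{X}\succ\textbf{Y}$'' from the one-directional axiom (iii), and the elementary count showing each degree-bounded layer of $Mon(A^m)$ is finite. Alternatively, one could reduce to the scalar case already treated in \cite{Gallego2}: split a nonempty $S\subseteq Mon(A^m)$ as $S=S_1\cup\cdots\cup S_m$ according to the index of its monomials, identify each $S_i$ with a subset of $Mon(A)$, use that monomial orders on $Mon(A)$ are well orders to obtain a least element of each nonempty $S_i$, and take the $\succeq$-least among these finitely many candidates; but the direct counting argument above is entirely self-contained.
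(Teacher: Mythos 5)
Your proof is correct, and it is essentially the argument the paper has in mind: the paper gives no details, simply stating that the proof for left ideals in \cite{Gallego2} can be adapted, and that proof is exactly the degree-compatibility-plus-finiteness argument you spell out (with your index-splitting alternative being the literal ``adaptation'' route). The one deduction worth keeping explicit is the one you already flag, namely that $\deg(\textbf{\emph{X}})>\deg(\textbf{\emph{Y}})$ forces $\textbf{\emph{X}}\succ\textbf{\emph{Y}}$, so a strictly descending chain stays in the finite set of monomials of bounded degree.
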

\begin{proof}
We can easy adapt the proof for left ideals presented in
\cite{Gallego2}.
\end{proof}
Given a monomial order $\succeq$ on
$Mon(A)$, we can define two natural orders on $Mon(A^m)$.
\begin{definition}
Let $\textbf{X}=X\textbf{e}_i$ and $\textbf{Y}=Y\textbf{e}_j\in
Mon(A^m)$.
\begin{enumerate}
\item [\rm(i)]The TOP {\rm(}term over position{\rm)} order is defined by
\begin{center}
$\textbf{X}\succeq\textbf{Y}\Longleftrightarrow
\begin{cases} X\succeq Y & \\
\text{or} & \\
X=Y \text{and}& i>j.
\end{cases}$
\end{center}
\item[\rm(ii)]The TOPREV order is defined by
\begin{center}
$\textbf{X}\succeq\textbf{Y}\Longleftrightarrow
\begin{cases} X\succeq Y & \\
\text{or} & \\
X=Y \text{and}& i<j.
\end{cases}$
\end{center}
\end{enumerate}
\end{definition}
\begin{remark}
(i) Note that with TOP we have
\begin{center}
$\textbf{\emph{e}}_m\succ\textbf{\emph{e}}_{m-1}\succ\cdots
\succ\textbf{\emph{e}}_1$
\end{center}
and
\begin{center}
$\textbf{\emph{e}}_1\succ\textbf{\emph{e}}_{2}\succ\cdots
\succ\textbf{\emph{e}}_m$
\end{center}
for TOPREV.

(ii) The POT (position over term) and POTREV  orders defined in
\cite{Loustaunau} and \cite{Lezama2} for modules over classical
polynomial commutative rings are not degree compatible.

(iii) Other examples of monomial orders in $Mon(A^m)$ are
considered in \cite{Gomez-Torrecillas2}.
\end{remark}
We fix monomial orders on $Mon(A)$ and $Mon(A^m)$; let
$\textbf{\emph{f}}\neq \textbf{0}$ be a vector of $A^m$, then we
may write $\textbf{\emph{f}}$ as a sum of terms in the following
way
\begin{center}
$\textbf{\emph{f}}=c_1\textbf{\emph{X}}_1+\cdots
+c_t\textbf{\emph{X}}_t$,
\end{center}
where $c_1,\dots ,c_t\in R-\{0\}$ and
$\textbf{\emph{X}}_1\succ\textbf{\emph{X}}_2\succ\cdots
\succ\textbf{\emph{X}}_t$ are monomials of $Mon(A^m)$.
\begin{definition}
With the above notation, we say that
\begin{enumerate}
\item[\rm(i)]$lt(\textbf{f}):=c_1\textbf{X}_1$ is the leading term of $\textbf{f}$.
\item[\rm(ii)]$lc(\textbf{f}):=c_1$ is the leading coefficient of $\textbf{f}$.
\item[\rm(iii)]$lm(\textbf{f}):=\textbf{X}_1$ is the leading monomial of $\textbf{f}$.
\end{enumerate}
\end{definition}
For $\textbf{\emph{f}}=\textbf{0}$ we define
$lm(\textbf{0})=\textbf{0},lc(\textbf{0})=0,lt(\textbf{0})=\textbf{0}$,
and if $\succeq$ is a monomial order on $Mon(A^m)$, then we define
$\textbf{X}\succ\textbf{0}$ for any $\textbf{X}\in Mon(A^m)$. So,
we extend $\succeq$ to $Mon(A^m)\cup \{\textbf{0}\}$.

\section{Reduction in $A^m$}

The reduction process in $A^m$ is defined as follows.
\begin{definition}\label{reductionformodules}
Let $F$ be a finite set of non-zero vectors of $A^m$, and let
$\textbf{f},\textbf{h}\in A^m$, we say that $\textbf{f}$ reduces
to $\textbf{h}$ by $F$ in one step, denoted
$\textbf{f}\xrightarrow{\,\, F\,\, } \textbf{h}$, if there exist
elements $\textbf{f}_1,\dots,\textbf{f}_t\in F$ and
$r_1,\dots,r_t\in R$ such that
\begin{enumerate}
\item[\rm(i)]$lm(\textbf{f}_i)|lm(\textbf{f})$, $1\leq i\leq t$, i.e.,
$ind(lm(\textbf{f}_{i}))=ind(lm(\textbf{f}))$ and there exists
$x^{\alpha_{i}}\in Mon(A)$ such that
$\alpha_{i}+\exp(lm(\textbf{f}_i))=\exp(lm(\textbf{f}))$.
\item[\rm(ii)]$lc(\textbf{f})=r_1\sigma^{\alpha_1}(lc(\textbf{f}_1))c_{\alpha_1,\textbf{f}_1}+
\cdots+r_t\sigma^{\alpha_t}(lc(\textbf{f}_t))c_{\alpha_t,\textbf{f}_t}$,
with $c_{\alpha_i,\textbf{f}_i}:=$ \linebreak
$c_{\alpha_i,\exp(lm(\textbf{f}_i))}$.
\item[\rm (iii)]$\textbf{h}=\textbf{f}-\sum_{i=1}^tr_ix^{\alpha_i}\textbf{f}_i$.
\end{enumerate}
We say that $\textbf{f}$ reduces to $\textbf{h}$ by $F$, denoted
$\textbf{f}\xrightarrow{\,\, F\,\, }_{+} \textbf{h}$, if and only
if there exist vectors $\textbf{h}_1,\dots ,\textbf{h}_{t-1}\in
A^m$ such that
\begin{center}
$\begin{CD} \textbf{f} @>{F}>> \textbf{h}_1 @>{F}>> \textbf{h}_2
@>{F}>>\cdots @>{F}>>\textbf{h}_{t-1} @>{F}>>\textbf{h}\,.
\end{CD}$
\end{center}
$\textbf{f}$ is reduced {\rm(}also called minimal{\rm)} w.r.t. $F$
if $\textbf{f} = \textbf{\emph{0}}$ or there is no one step
reduction  of $\textbf{f}$ by $F$, i.e., one of the first two
conditions of Definition \ref{reductionformodules} fails.
Otherwise, we will say that $\textbf{f}$ is reducible w.r.t. $F$.
If $\textbf{f}\xrightarrow{\,\, F\,\, }_{+} \textbf{h}$ and
$\textbf{h}$ is reduced w.r.t. $F$, then we say that $\textbf{h}$
is a remainder for $\textbf{f}$ w.r.t. $F$.
\end{definition}
\begin{remark}
Related to the previous definition we have the following remarks:

(i) By Theorem \ref{coefficientes}, the coefficients
$c_{\alpha_i,\textbf{\emph{f}}_i}$ are unique and satisfy
\begin{center}
$x^{\alpha_i}x^{\exp(lm(\textbf{\emph{f}}_i))}=c_{\alpha_i,\textbf{\emph{f}}_i}x^{\alpha_i+\exp(lm(\textbf{\emph{f}}_i))}+p_{\alpha_i,\textbf{\emph{f}}_i}$,
\end{center}
where $p_{\alpha_i,\textbf{\emph{f}}_i}=0$ or
$\deg(lm(p_{\alpha_i,\textbf{\emph{f}}_i}))<|\alpha_i+\exp(lm(\textbf{\emph{f}}_i))|$,
$1\leq i\leq t$.

(ii) $lm(\textbf{\emph{f}})\succ lm(\textbf{\emph{h}})$ and
$\textbf{\emph{f}}-\textbf{\emph{h}}\in \langle F\rangle$, where
$\langle F\rangle$ is the submodule of $A^m$ generated by $F$.

(iii) The remainder of $\textbf{\emph{f}}$ is not unique.

(iv) By definition we will assume that $\textbf{0}\xrightarrow {F}
\textbf{0}$.

(v)
\[lt(\textbf{\emph{f}})=\sum_{i=1}^{t}r_{i}lt(x^{\alpha_{i}}lt(\textbf{\emph{f}}_{i})),\]
\end{remark}
The proofs of the next technical proposition and theorem can be
also adapted from \cite{Gallego2}.
\begin{proposition}\label{xtetafformodules}
Let $A$ be a $\sigma$-$PBW$ extension such that $c_{\alpha,\beta}$
is invertible for each $\alpha,\beta \in \mathbb{N}^n$. Let
$\textbf{f},\textbf{h}\in A^m$, $\theta \in \mathbb{N}^n$ and
$F=\{\textbf{f}_1,\dots ,\textbf{f}_t\}$ be a finite set of
non-zero vectors of $A^m$. Then,
\begin{enumerate}
\item[\rm{(i)}]If $\textbf{f}\xrightarrow{\,\, F\,\, } \textbf{h}$, then there exists $\textbf{p}\in
A^m$ with $\textbf{p}=\textbf{\emph{0}}$ or
$lm(x^{\theta}\textbf{f})\succ lm(\textbf{p})$ such that
$x^{\theta}\textbf{f}+\textbf{p}\xrightarrow{\,\, F\,\, }
x^{\theta}\textbf{h}$. In particular, if $A$ is quasi-commutative,
then $\textbf{p}=\textbf{\emph{0}}$.
\item[\rm{(ii)}]If $\textbf{f}\xrightarrow{\,\, F\,\, }_+
\textbf{h}$ and $\textbf{p}\in A^m$ is such that
$\textbf{p}=\textbf{\emph{0}}$ or $lm(\textbf{h})\succ
lm(\textbf{p})$, then $\textbf{f}+\textbf{p}\xrightarrow{\,\,
F\,\, }_+ \textbf{h}+\textbf{p}$.
\item[\rm{(iii)}]If $\textbf{f}\xrightarrow{\,\, F\,\, }_+ \textbf{h}$, then there
exists $\textbf{p}\in A^m$ with $\textbf{p}=\textbf{\emph{0}}$ or
$lm(x^{\theta}\textbf{f})\succ lm(\textbf{p})$ such that
$x^{\theta}\textbf{f}+\textbf{p}\xrightarrow{\,\, F\,\, }_+
x^{\theta}\textbf{h}$. If $A$ is quasi-commutative, then
$\textbf{p}=\textbf{\emph{0}}$.
\item[\rm{(iv)}]If $\textbf{f}\xrightarrow{\,\, F\,\, }_+ \textbf{\emph{0}}$, then there
exists $\textbf{p}\in A^m$ with $\textbf{p}=\textbf{\emph{0}}$ or
$lm(x^{\theta}\textbf{f})\succ lm(\textbf{p})$ such that
$x^{\theta}\textbf{f}+\textbf{p}\xrightarrow{\,\, F\,\, }_+
\textbf{\emph{0}}$. If $A$ is quasi-commutative, then
$\textbf{p}=\textbf{\emph{0}}$.
\end{enumerate}
\end{proposition}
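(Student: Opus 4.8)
The plan is to prove (i) by a direct computation with the structure constants of $A$, to obtain (ii) from an induction on the length of a reduction chain, and then to deduce (iii) — together with (iv) as the case $\mathbf{h}=\mathbf{0}$ — by splicing (i) and (ii) along such a chain. For (i) we may assume $\mathbf{f}\neq\mathbf{0}$, for otherwise $\mathbf{h}=\mathbf{0}$ and there is nothing to prove. Write the one--step reduction $\mathbf{f}\xrightarrow{F}\mathbf{h}$ by means of data $\mathbf{f}_1,\dots,\mathbf{f}_t\in F$, monomials $x^{\beta_i}\in Mon(A)$ and coefficients $r_i\in R$ as in Definition \ref{reductionformodules}, and set $\gamma_i:=\exp(lm(\mathbf{f}_i))$, $\gamma:=\exp(lm(\mathbf{f}))$, $\ell:=ind(lm(\mathbf{f}))$, so that $\beta_i+\gamma_i=\gamma$ and $ind(lm(\mathbf{f}_i))=\ell$ for all $i$. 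Since multiplication by $x^\theta$ affects only the $A$-component and preserves the index, $lm(x^\theta\mathbf{f})=x^{\theta+\gamma}\mathbf{e}_\ell$, and by Theorem \ref{coefficientes} $lc(x^\theta\mathbf{f})=\sigma^\theta(lc(\mathbf{f}))\,c_{\theta,\gamma}$, which is nonzero because $\sigma^\theta$ is injective and $c_{\theta,\gamma}$ is invertible by hypothesis. Putting $\alpha_i:=\theta+\beta_i$ we get $\alpha_i+\gamma_i=\theta+\gamma$, hence $lm(\mathbf{f}_i)\mid lm(x^\theta\mathbf{f})$ for all $i$.

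The core of (i) is to transfer the coefficient identity. Applying the ring endomorphism $\sigma^\theta$ to $lc(\mathbf{f})=\sum_i r_i\,\sigma^{\beta_i}(lc(\mathbf{f}_i))\,c_{\beta_i,\gamma_i}$, multiplying on the right by $c_{\theta,\gamma}$, and then using in each term first $\sigma^\theta(c_{\beta_i,\gamma_i})\,c_{\theta,\gamma}=c_{\theta,\beta_i}\,c_{\theta+\beta_i,\gamma_i}$ and then $\sigma^\theta(\sigma^{\beta_i}(lc(\mathbf{f}_i)))\,c_{\theta,\beta_i}=c_{\theta,\beta_i}\,\sigma^{\theta+\beta_i}(lc(\mathbf{f}_i))$ — both being the identities of Remark \ref{identities}(ii) — gives
\[lc(x^\theta\mathbf{f})=\sum_{i=1}^{t}\big(\sigma^\theta(r_i)\,c_{\theta,\beta_i}\big)\,\sigma^{\alpha_i}(lc(\mathbf{f}_i))\,c_{\alpha_i,\mathbf{f}_i}.\]
Thus, with $r_i':=\sigma^\theta(r_i)\,c_{\theta,\beta_i}$, the three conditions of Definition \ref{reductionformodules} hold and $x^\theta\mathbf{f}\xrightarrow{F}\mathbf{h}':=x^\theta\mathbf{f}-\sum_i r_i'x^{\alpha_i}\mathbf{f}_i$. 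To compare $\mathbf{h}'$ with $x^\theta\mathbf{h}$, Theorem \ref{coefficientes} gives $x^\theta r_i x^{\beta_i}=r_i'x^{\alpha_i}+q_i$, where $q_i:=\sigma^\theta(r_i)\,p_{\theta,\beta_i}+p_{\theta,r_i}\,x^{\beta_i}$ is $0$ or has degree $<|\theta|+|\beta_i|$; since $\succeq$ is degree compatible, $\deg(\mathbf{f}_i)=|\gamma_i|$, so $\deg(q_i\mathbf{f}_i)<|\theta|+|\beta_i|+|\gamma_i|=\deg(lm(x^\theta\mathbf{f}))$. Hence $\mathbf{p}:=-\sum_i q_i\mathbf{f}_i$ is $\mathbf{0}$ or satisfies $lm(x^\theta\mathbf{f})\succ lm(\mathbf{p})$, and from $x^\theta\mathbf{h}=x^\theta\mathbf{f}-\sum_i(r_i'x^{\alpha_i}+q_i)\mathbf{f}_i=\mathbf{h}'+\mathbf{p}$ one sees that $x^\theta\mathbf{f}+\mathbf{p}$ has the leading term of $x^\theta\mathbf{f}$, so the same data reduces it and $x^\theta\mathbf{f}+\mathbf{p}\xrightarrow{F}(x^\theta\mathbf{f}+\mathbf{p})-\sum_i r_i'x^{\alpha_i}\mathbf{f}_i=\mathbf{h}'+\mathbf{p}=x^\theta\mathbf{h}$. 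This is (i); and if $A$ is quasi-commutative then $p_{\theta,\beta_i}=p_{\theta,r_i}=0$ by Remark \ref{identities}(iii), so each $q_i=0$ and $\mathbf{p}=\mathbf{0}$.

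Parts (ii)–(iv) are then formal. For (ii) one inducts on chain length: for a single step $\mathbf{f}\xrightarrow{F}\mathbf{h}$ the hypothesis on $\mathbf{p}$ forces $\mathbf{p}=\mathbf{0}$ when $\mathbf{h}=\mathbf{0}$ and otherwise gives $lm(\mathbf{p})\prec lm(\mathbf{h})\prec lm(\mathbf{f})$, so $\mathbf{f}+\mathbf{p}$ has the leading term of $\mathbf{f}$ and the very same data yields $\mathbf{f}+\mathbf{p}\xrightarrow{F}\mathbf{h}+\mathbf{p}$; for a longer chain one splits off the first step $\mathbf{f}\xrightarrow{F}\mathbf{h}_1$ (where still $lm(\mathbf{h}_1)\succeq lm(\mathbf{h})\succ lm(\mathbf{p})$, or $\mathbf{p}=\mathbf{0}$) and applies the inductive hypothesis to the shorter chain $\mathbf{h}_1\xrightarrow{F}_{+}\mathbf{h}$. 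For (iii) one inducts on chain length with base case (i): given $\mathbf{f}\xrightarrow{F}\mathbf{g}_1\xrightarrow{F}_{+}\mathbf{h}$, part (i) provides $\mathbf{p}_1$ with $x^\theta\mathbf{f}+\mathbf{p}_1\xrightarrow{F}x^\theta\mathbf{g}_1$ and the inductive hypothesis provides $\mathbf{p}_2$ with $x^\theta\mathbf{g}_1+\mathbf{p}_2\xrightarrow{F}_{+}x^\theta\mathbf{h}$; since $lm(x^\theta\mathbf{f})=lm(x^\theta\mathbf{f}+\mathbf{p}_1)\succ lm(x^\theta\mathbf{g}_1)\succ lm(\mathbf{p}_2)$, part (ii) allows adding $\mathbf{p}_2$ to the first reduction, and concatenation gives $x^\theta\mathbf{f}+(\mathbf{p}_1+\mathbf{p}_2)\xrightarrow{F}_{+}x^\theta\mathbf{h}$ with $\mathbf{p}_1+\mathbf{p}_2=\mathbf{0}$ or $lm(\mathbf{p}_1+\mathbf{p}_2)\prec lm(x^\theta\mathbf{f})$; in the quasi-commutative case every correction produced along the way is $\mathbf{0}$. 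Finally, (iv) is (iii) with $\mathbf{h}=\mathbf{0}$, using $x^\theta\mathbf{0}=\mathbf{0}$.

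The one genuine difficulty is the computation in (i): keeping the left-coefficients on the correct side when pushing $x^\theta$ through $r_i$ and through $x^{\beta_i}$, which is precisely what the two identities of Remark \ref{identities}(ii) handle and which produces the reduction coefficients $r_i'=\sigma^\theta(r_i)c_{\theta,\beta_i}$; and then verifying that the accumulated correction $\mathbf{p}$ really sits below $lm(x^\theta\mathbf{f})$, the point where degree compatibility of $\succeq$ combines with the degree bounds on $p_{\theta,r}$ and $p_{\theta,\beta}$ from Theorem \ref{coefficientes}. The remaining inductions are routine and, as the authors note, can be adapted from the left-ideal case in \cite{Gallego2}.
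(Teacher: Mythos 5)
Your proof is correct and follows essentially the route the paper intends: the paper omits the argument and refers to the adaptation of the left-ideal case in \cite{Gallego2}, and your part (i) is exactly that adaptation (producing the new coefficients $r_i'=\sigma^{\theta}(r_i)c_{\theta,\beta_i}$ via the two identities of Remark \ref{identities}(ii), bounding the correction $\textbf{\emph{p}}$ by the degree estimates of Theorem \ref{coefficientes} together with degree compatibility, and noting $p_{\theta,r}=p_{\theta,\beta}=0$ in the quasi-commutative case), with (ii)--(iv) obtained by the standard inductions on the length of the reduction chain.
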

\begin{theorem}\label{algdivformodules}
Let $F=\{\textbf{f}_1,\dots ,\textbf{f}_t\}$ be a set of non-zero
vectors of $A^m$ and $\textbf{f}\in A^m$, then the Division
Algorithm below produces polynomials $q_1,\dots ,q_t\in A$ and a
reduced vector $\textbf{h}\in A^m$ w.r.t. $F$ such that
$\textbf{f}\xrightarrow{\,\, F\,\, }_{+} \textbf{h}$ and
\begin{center}
$\textbf{f}=q_1\textbf{f}_1+\cdots +q_t\textbf{f}_t+\textbf{h}$
\end{center}
with
\begin{center}
$lm(\textbf{f})=\max\{lm(lm(q_1)lm(\textbf{f}_1)),\dots
,lm(lm(q_t)lm(\textbf{f}_t)),lm(\textbf{h})\}$.
\end{center}
\begin{center}
\fbox{\parbox[c]{11cm}{
\begin{center}
{\rm \textbf{Division Algorithm in} $A^m$}
\end{center}
\begin{description}
\item[]{\rm \textbf{INPUT}:} $\textbf{f},\textbf{f}_1,\dots ,\textbf{f}_t\in A^m \, \, \text{with}\,\, \textbf{f}_j\neq \textbf{\emph{0}}\, (1\leq j\leq t)$
\item[]{\rm \textbf{OUTPUT}:} $q_1,\dots ,q_t\in A\,\, ,\textbf{h}\in A^m \, \text{with}\,\, \textbf{f}=q_1\textbf{f}_1+\cdots
+q_t\textbf{f}_t+\textbf{h}$, $\textbf{h}$ reduced w.r.t. $\{\textbf{f}_1,\dots ,\textbf{f}_t\}$ and\\
$lm(\textbf{f})=\max\{lm(lm(q_1)lm(\textbf{f}_1)),\dots
,lm(lm(q_t)lm(\textbf{f}_t)),lm(\textbf{h})\}$
\item[]{\rm \textbf{INITIALIZATION}:} $q_1:=0,q_2:=0,\dots ,q_t:=0,\textbf{h}:=\textbf{f}$
\item[]{\rm \textbf{WHILE}} $\textbf{h}\neq \textbf{\emph{0}}$
and there exists $j$ such that $lm(\textbf{f}_j)$ divides
$lm(\textbf{h})$ \textbf{\emph{DO}}
\begin{quote}Calculate $J:=\{j\,|\,lm(\textbf{f}_j)$ divides $lm(\textbf{h})\}$

\smallskip
{\rm \textbf{FOR}} $j\in J$ {\rm \textbf{DO}}

\smallskip
\begin{quote}
Calculate $\alpha_j\in \mathbb{N}^n$ such that
$\alpha_j+\exp(lm(\textbf{f}_j))=\exp(lm(\textbf{h}))$
\end{quote}

\smallskip
{\rm \textbf{IF}} the equation $lc(\textbf{h})=\sum_{j\in
J}r_j\sigma^{\alpha_j}(lc(\textbf{f}_j))c_{\alpha_j,\textbf{f}_j}$
is soluble, where $c_{\alpha_j,\textbf{f}_j}$ are defined as in
Definition \ref{reductionformodules}

\smallskip
{\rm \textbf{THEN}}
\begin{quote}Calculate one solution $(r_j)_{j\in J}$

\smallskip
$\textbf{h}:=\textbf{h}-\sum_{j\in J}r_jx^{\alpha_j}\textbf{f}_j$

\smallskip
{\rm \textbf{FOR}} $j\in J$ {\rm \textbf{DO}}
\begin{quote}$q_j:=q_j+r_jx^{\alpha_j}$\end{quote}
\end{quote}
\emph{\textbf{ELSE}}
\begin{quote}
Stop
\end{quote}
\end{quote}
\end{description}}}
\end{center}
\end{theorem}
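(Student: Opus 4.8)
The plan is to check the three standard points for a division algorithm: that the loop invariant yields the asserted decomposition, that the procedure terminates, and that the returned $\textbf{h}$ is reduced and meets the leading-monomial equality. I would start with the loop invariant $\textbf{f}=q_1\textbf{f}_1+\cdots+q_t\textbf{f}_t+\textbf{h}$: it holds right after INITIALIZATION (all $q_j=0$, $\textbf{h}=\textbf{f}$), and each execution of the WHILE body replaces $\textbf{h}$ by $\textbf{h}-\sum_{j\in J}r_jx^{\alpha_j}\textbf{f}_j$ and simultaneously each $q_j$ by $q_j+r_jx^{\alpha_j}$, which leaves both sides of the identity unchanged; hence the invariant still holds when the loop exits.

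For termination, the key point is that each pass through the loop is a one-step reduction $\textbf{h}\xrightarrow{\ F\ }\textbf{h}'$ in the sense of Definition \ref{reductionformodules} that strictly lowers the leading monomial. Indeed, for $j\in J$ we have $\mathrm{ind}(lm(\textbf{f}_j))=\mathrm{ind}(lm(\textbf{h}))$ and $\alpha_j+\exp(lm(\textbf{f}_j))=\exp(lm(\textbf{h}))$, so $lm(x^{\alpha_j}\textbf{f}_j)=lm(\textbf{h})$; expanding $x^{\alpha_j}\textbf{f}_j$ with both parts of Theorem \ref{coefficientes} (the rule for $x^{\alpha}r$ and the rule for $x^{\alpha}x^{\beta}$), and using monomial-order axiom (ii) together with degree compatibility to bound the leftover terms, one gets $lt(x^{\alpha_j}\textbf{f}_j)=\sigma^{\alpha_j}(lc(\textbf{f}_j))\,c_{\alpha_j,\textbf{f}_j}\,lm(\textbf{h})$ with all other monomials of $x^{\alpha_j}\textbf{f}_j$ strictly below $lm(\textbf{h})$. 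Hence the coefficient of $lm(\textbf{h})$ in $\sum_{j\in J}r_jx^{\alpha_j}\textbf{f}_j$ is $\sum_{j\in J}r_j\sigma^{\alpha_j}(lc(\textbf{f}_j))c_{\alpha_j,\textbf{f}_j}=lc(\textbf{h})$ by the solubility equation solved in the algorithm, so that sum has leading term precisely $lt(\textbf{h})$ and the updated $\textbf{h}$ is either $\textbf{0}$ or has leading monomial $\prec lm(\textbf{h})$. Since $\succeq$ is a well order on $Mon(A^m)\cup\{\textbf{0}\}$ by Proposition \ref{wellorder}, the leading monomials of the successive values of $\textbf{h}$ cannot descend forever, so the loop halts; concatenating the one-step reductions gives $\textbf{f}\xrightarrow{\ F\ }_+\textbf{h}$.

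Next I would argue that the output $\textbf{h}$ is reduced w.r.t. $F$. If $\textbf{h}\neq\textbf{0}$ on exit, then either the set $J$ of $j$ with $lm(\textbf{f}_j)\mid lm(\textbf{h})$ is empty, so condition (i) of Definition \ref{reductionformodules} fails, or the equation $lc(\textbf{h})=\sum_{j\in J}r_j\sigma^{\alpha_j}(lc(\textbf{f}_j))c_{\alpha_j,\textbf{f}_j}$ has no solution; since any solution over a sub-multiset of $J$ extends to one over all of $J$ by adjoining zero coefficients, this insolubility means condition (ii) fails for every admissible choice, so no one-step reduction of $\textbf{h}$ is possible. For the max-formula I would track which monomials can occur: at the pass where a term $r_jx^{\alpha_j}$ is appended to $q_j$, the then-current leading monomial of $\textbf{h}$ equals $lm(x^{\alpha_j}\textbf{f}_j)$, and since leading monomials of $\textbf{h}$ only decrease from the initial value $lm(\textbf{f})$, every monomial contributed to $q_j\textbf{f}_j$ is $\preceq lm(\textbf{f})$; thus $lm(lm(q_j)lm(\textbf{f}_j))\preceq lm(\textbf{f})$ for each $j$, and likewise $lm(\textbf{h})\preceq lm(\textbf{f})$ for the final $\textbf{h}$. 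The reverse inequality $lm(\textbf{f})\preceq\max_j\{lm(lm(q_j)lm(\textbf{f}_j)),lm(\textbf{h})\}$ is immediate from $\textbf{f}=\sum_jq_j\textbf{f}_j+\textbf{h}$, and together these give the claimed equality.

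I expect the one genuinely delicate step to be the leading-term identity $lt(x^{\alpha_j}\textbf{f}_j)=\sigma^{\alpha_j}(lc(\textbf{f}_j))c_{\alpha_j,\textbf{f}_j}\,lm(\textbf{h})$: making it rigorous means peeling $x^{\alpha_j}\textbf{f}_j$ apart with both identities of Theorem \ref{coefficientes} and checking that the $p_{\alpha,r}$- and $p_{\alpha,\beta}$-error terms, as well as the images under $x^{\alpha_j}$ of the non-leading terms of $\textbf{f}_j$, all stay strictly below $lm(\textbf{h})$ — the last using monomial-order axiom (ii). Everything else is formal bookkeeping, and, exactly as in \cite{Gallego2}, no invertibility hypothesis on the constants $c_{\alpha,\beta}$ is needed for this theorem.
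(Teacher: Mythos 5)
Your proof is correct, and it is essentially the argument the paper intends: the paper omits the proof, saying it is adapted from \cite{Gallego2}, and what you give is exactly that standard adaptation (loop invariant, cancellation of $lt(\textbf{h})$ via the solubility equation and the leading-term identity from Theorem \ref{coefficientes}, termination by the well-ordering of Proposition \ref{wellorder}, reducedness of the output by extending any solution over a subset of $J$ by zeros, and the two inequalities giving the max-formula). Nothing further is needed.
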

\begin{example}
We consider the Heisenberg algebra,
$A:=h_1(2)=\sigma(\mathbb{Q})\langle x, y, z\rangle$, with deglex
order and $x>y>z$ in $Mon(A)$ and the TOPREV order in $Mon(A^3)$
with $\boldsymbol{e}_1 \succ \boldsymbol{e}_2 \succ
\boldsymbol{e}_3$. Let  $\boldsymbol{f}:= x^2yz\boldsymbol{e}_1+
y^2z\boldsymbol{e}_2 + xz\boldsymbol{e}_1 + z^2\boldsymbol{e}_3$,
$\boldsymbol{f}_1:= xz\boldsymbol{e}_1 + x\boldsymbol{e}_3+
y\boldsymbol{e}_2$ and $\boldsymbol{f}_2:= xy\boldsymbol{e}_1 +
z\boldsymbol{e}_2 + z\boldsymbol{e}_3$. Following the Division
Algorithm we will compute $q_1, q_2\in A$ and $\boldsymbol{h}\in
A^3$ such that
$\boldsymbol{f}=q_1\boldsymbol{f}_1+q_2\boldsymbol{f}_2+\boldsymbol{h}$,
with $lm(\boldsymbol{f})=\max\{lm(lm(q_1)lm(\boldsymbol{f}_1)),
lm(lm(q_2)lm(\boldsymbol{f}_2)), lm(\boldsymbol{h})\}$. We will
represent the elements of $Mon(A)$ by $t^{\alpha}$ instead of
$x^{\alpha}$. For $j=1,2$, we will note
$\alpha_j:=(\alpha_{j1},\alpha_{j2},\alpha_{j3})\in \mathbb{N}^3$.

\textit{Step 1}: we start with $\boldsymbol{h}:= \boldsymbol{f},
q_1:= 0$ and $q_2:= 0$; since $lm(\boldsymbol{f}_1)\mid
lm(\boldsymbol{h})$ and $lm(\boldsymbol{f}_2)\mid
lm(\boldsymbol{h})$, we compute $\alpha_j$ such that $\alpha_j +
\exp(lm(\boldsymbol{f_j}))= \exp(lm(\boldsymbol{h}))$.

\noindent $\centerdot$ $lm(t^{\alpha_1} lm(\boldsymbol{f}_1))=
lm(\boldsymbol{h})$, so
$lm(x^{\alpha_{11}}y^{\alpha_{12}}z^{\alpha_{13}}xz)= x^2yz$, and
hence $\alpha_{11} = 1$; $\alpha_{12} = 1$; $\alpha_{13} = 0$.
Thus, $t^{\alpha_1} = xy$.

\noindent $\centerdot$ $lm(t^{\alpha_2} lm(\boldsymbol{f}_2))=
lm(\boldsymbol{h})$, so
$lm(x^{\alpha_{21}}y^{\alpha_{22}}z^{\alpha_{23}}xy)= x^2yz$, and
hence $\alpha_{21} = 1$; $\alpha_{22} = 0$; $\alpha_{23} = 1$.
Thus, $t^{\alpha_2} = xz$.

Next, for $j = 1, 2$ we compute $c_{\alpha_j,\boldsymbol{f_j}}$:

\noindent $\centerdot$ $t^{\alpha_1} t^{exp(lm(\boldsymbol{f}_1))}
= (xy)(xz) = x(2xy)z = 2x^2yz$. Thus,
$c_{\alpha_1,\boldsymbol{f}_1} = 2$.

\noindent $\centerdot$ $t^{\alpha_2} t^{exp(lm(\boldsymbol{f}_2))}
= (xz)(xy) = x(\frac{1}{2} xz + y)y = \frac{1}{2}x^2zy + xy^2 =
x^2yz + xy^2$. So, $c_{\alpha_2,\boldsymbol{f}_2} = 1$.

We must solve the equation
\begin{align*}
1 = lc(\boldsymbol{h}) &= r_1\sigma^{\alpha_1}(lc(\boldsymbol{f}_1))c_{\alpha_1,\boldsymbol{f}_1} + r_2\sigma^{\alpha_2}(lc(\boldsymbol{f}_2))c_{\alpha_2,\boldsymbol{f}_2}\\
&= r_1\sigma^{\alpha_1}(1)2 + r_2\sigma^{\alpha_2}(1)1\\
&= 2r_1 + r_2,
\end{align*}
then $r_1 = 0$ and $r_2 = 1$.

We make $\boldsymbol{h} := \boldsymbol{h} -
(r_1t^{\alpha_1}\boldsymbol{f}_1 +
r_2t^{\alpha_2}\boldsymbol{f}_2)$, i.e.,
\begin{align*}
\boldsymbol{h} &:= \boldsymbol{h} - (xz(xy\boldsymbol{e}_1 + z\boldsymbol{e}_2 + z\boldsymbol{e}_3))\\
&= \boldsymbol{h} - (xzxy\boldsymbol{e}_1 + xz^2\boldsymbol{e}_2 + xz^2\boldsymbol{e}_3)\\
&= \boldsymbol{h} - ((x^2yz + xy^2)\boldsymbol{e}_1 + xz^2\boldsymbol{e}_2 + xz^2\boldsymbol{e}_3)\\
&= x^2yz\boldsymbol{e}_1 + xz\boldsymbol{e}_1 + y^2z\boldsymbol{e}_2 + z^2\boldsymbol{e}_3 - x^2yz\boldsymbol{e}_1 - xy^2\boldsymbol{e}_1 - xz^2\boldsymbol{e}_2 - xz^2\boldsymbol{e}_3\\
&=- xy^2\boldsymbol{e}_1- xz^2\boldsymbol{e}_2-
xz^2\boldsymbol{e}_3+y^2z\boldsymbol{e}_2+xz\boldsymbol{e}_1+
z^2\boldsymbol{e}_3.
\end{align*}
In addition, we have $q_1 := q_1 + r_1t^{\alpha_1} = 0$ and $q_2
:= q_2 + r_2t^{\alpha_2} = xz$.

\textit{Step 2}: $\boldsymbol{h}:= - xy^2\boldsymbol{e}_1-
xz^2\boldsymbol{e}_2-
xz^2\boldsymbol{e}_3+y^2z\boldsymbol{e}_2+xz\boldsymbol{e}_1+
z^2\boldsymbol{e}_3$, so $lm(\boldsymbol{h}) =
xy^2\boldsymbol{e}_1$ and $lc(\boldsymbol{h}) = -1$; moreover,
$q_1 = 0$ and $q_2 = xz$. Since $lm(\boldsymbol{f}_2)\mid
lm(\boldsymbol{h})$, we compute $\alpha_2$ such that $\alpha_2 +
exp(lm(\boldsymbol{f}_2))= exp(lm(\boldsymbol{h}))$:

\noindent $\centerdot$ $lm(t^{\alpha_2} lm(\boldsymbol{f}_2))=
lm(\boldsymbol{h})$, then
$lm(x^{\alpha_{21}}y^{\alpha_{22}}z^{\alpha_{23}}xy)= xy^2$, so
$\alpha_{21} = 0$; $\alpha_{22} = 1$; $\alpha_{23} = 0$. Thus,
$t^{\alpha_2} = y$.

We compute $c_{\alpha_2,\boldsymbol{f}_2}$: $t^{\alpha_2}
t^{exp(lm(\boldsymbol{f}_2))} = y(xy) = 2xy^2$. Then,
$c_{\alpha_2,\boldsymbol{f}_2} = 2$.

We solve the equation
\begin{align*}
- 1 = lc(\boldsymbol{h}) &= r_2\sigma^{\alpha_2}(lc(\boldsymbol{f}_2))c_{\alpha_2,\boldsymbol{f}_2}\\
&= r_2\sigma^{\alpha_2}(1)2 = 2r_2,
\end{align*}
thus, $r_2 = -\frac{1}{2}$.

We make $\boldsymbol{h} := \boldsymbol{h} -
r_2t^{\alpha_2}\boldsymbol{f}_2$, i.e.,
\begin{align*}
\boldsymbol{h} &:= \boldsymbol{h} + \frac{1}{2}y(xy\boldsymbol{e}_1 + z\boldsymbol{e}_2 + z\boldsymbol{e}_3)\\
&= \boldsymbol{h} + \frac{1}{2}yxy\boldsymbol{e}_1 + \frac{1}{2}yz\boldsymbol{e}_2 + \frac{1}{2}yz\boldsymbol{e}_3\\
&=- xz^2\boldsymbol{e}_2- xz^2\boldsymbol{e}_3+
y^2z\boldsymbol{e}_2+ xz\boldsymbol{e}_1+
\frac{1}{2}yz\boldsymbol{e}_2+ \frac{1}{2}yz\boldsymbol{e}_3+
z^2\boldsymbol{e}_3.
\end{align*}
We have also that $q_1 := 0$ and $q_2 := q_2 + r_2t^{\alpha_2} =
xz - \frac{1}{2}y$.

\textit{Step 3}: $\boldsymbol{h} = - xz^2\boldsymbol{e}_2-
xz^2\boldsymbol{e}_3+ y^2z\boldsymbol{e}_2+ xz\boldsymbol{e}_1+
\frac{1}{2}yz\boldsymbol{e}_2+ \frac{1}{2}yz\boldsymbol{e}_3+
z^2\boldsymbol{e}_3$, so $lm(\boldsymbol{h}) =
xz^2\boldsymbol{e}_2$ and  $lc(\boldsymbol{h}) = -1$; moreover,
$q_1 = 0$ and $q_2 = xz - \frac{1}{2}y$. Since
$lm(\boldsymbol{f}_1)\nmid lm(\boldsymbol{h})$ and
$lm(\boldsymbol{f}_2)\nmid lm(\boldsymbol{h})$, then
$\boldsymbol{h}$ is reduced with respect to $\{\boldsymbol{f}_1,
\boldsymbol{f}_2\}$, so the algorithm stops.

Thus, we get $q_1, q_2 \in A$ and $\boldsymbol{h} \in A^3$ reduced
such that $\boldsymbol{f} = q_1\boldsymbol{f}_1 +
q_2\boldsymbol{f}_2 + \boldsymbol{h}$. In fact,
\begin{align*}
&q_1\boldsymbol{f}_1 + q_2\boldsymbol{f}_2 + \boldsymbol{h} = 0\boldsymbol{f}_1 + \left(xz - \frac{1}{2}y \right)\boldsymbol{f}_2 + \boldsymbol{h} \\
&= (xz - \frac{1}{2}y)(xy\boldsymbol{e}_1 + z\boldsymbol{e}_2 +
z\boldsymbol{e}_3) - xz^2\boldsymbol{e}_2- xz^2\boldsymbol{e}_3+
y^2z\boldsymbol{e}_2+ xz\boldsymbol{e}_1\\
&\hspace{.4 cm} +\frac{1}{2}yz\boldsymbol{e}_2+
\frac{1}{2}yz\boldsymbol{e}_3+
z^2\boldsymbol{e}_3\\
&= x^2yz\boldsymbol{e}_1 + xy^2\boldsymbol{e}_1 -
xy^2\boldsymbol{e}_1 + xz^2\boldsymbol{e}_2 -
\frac{1}{2}yz\boldsymbol{e}_2 +
xz^2\boldsymbol{e}_3 - \frac{1}{2}yz\boldsymbol{e}_3\\
&\hspace{.4 cm} - xz^2\boldsymbol{e}_2- xz^2\boldsymbol{e}_3+
y^2z\boldsymbol{e}_2+ xz\boldsymbol{e}_1+
\frac{1}{2}yz\boldsymbol{e}_2+ \frac{1}{2}yz\boldsymbol{e}_3+
z^2\boldsymbol{e}_3\\
&= x^2yz\boldsymbol{e}_1 + y^2z\boldsymbol{e}_2+
xz\boldsymbol{e}_1+ z^2\boldsymbol{e}_3 = \boldsymbol{f},
\end{align*}
and $\max\{lm(lm(q_i)lm(\boldsymbol{f}_i)),
lm(\boldsymbol{h})\}_{i=1,2} = \max\{0, x^2yz\boldsymbol{e}_1,
xz^2\boldsymbol{e}_2\} $ $= x^2yz\boldsymbol{e}_1 =
lm(\boldsymbol{f})$.
\end{example}

\section{Gröbner bases}
Our next purpose is to define Gröbner bases for submodules of
$A^m$.
\begin{definition}
Let $M\neq 0$ be a submodule of $A^m$ and let $G$ be a non empty
finite subset of non-zero vectors of $M$, we say that $G$ is a
Gröbner basis for $M$ if each element $\textbf{\emph{0}}\neq
\textbf{f}\in M$ is reducible w.r.t. $G$.
\end{definition}
We will say that $\{\textbf{0}\}$ is a Gröbner basis for $M=0$.
\begin{theorem}\label{teogrobnersigmapbwformodules}
Let $M\neq 0$ be a submodule of $A^m$ and let $G$ be a finite
subset of non-zero vectors of $M$. Then the following conditions
are equivalent:
\begin{enumerate}
\item[\rm(i)]$G$ is a Gröbner basis for $M$.
\item[\rm(ii)]For any vector $\textbf{f}\in A^m$,
\begin{center}
$\textbf{f}\in M$ if and only if $\textbf{f}\xrightarrow{\,\,
G\,\, }_{+} \textbf{\emph{0}}$.
\end{center}
\item[\rm(iii)]For any $\textbf{\emph{0}}\neq \textbf{f}\in M$ there exist $\textbf{g}_1,\dots ,\textbf{g}_t\in
G$ such that $lm(\textbf{g}_j)|lm(\textbf{f})$, $1\leq j\leq t$,
{\rm(}i.e., $ind(lm(\textbf{g}_{j}))=ind(lm(\textbf{f}))$ and
there exist $\alpha_j\in \mathbb{N}^n$ such that
$\alpha_j+\exp(lm(\textbf{g}_j))=\exp(lm(\textbf{f}))${\rm)} and
\begin{center}
$lc(\textbf{f})\in \langle
\sigma^{\alpha_1}(lc(\textbf{g}_1))c_{\alpha_1,\textbf{g}_1},\dots
,\sigma^{\alpha_t}(lc(\textbf{g}_t))c_{\alpha_t,\textbf{g}_t}\rangle$.
\end{center}
\item[\rm(iv)]For $\alpha \in \mathbb{N}^n$ and $1\leq u\leq m$, let $\langle \alpha ,M\rangle_u$ be the
left ideal of $R$ defined by
\begin{center}
$\langle \alpha ,M\rangle_u:=\langle lc(\textbf{f})|\textbf{f}\in
M,ind(lm(\textbf{f}))=u, \exp(lm(\textbf{f}))=\alpha\rangle$.
\end{center}
Then, $\langle \alpha ,M\rangle_u=J_u$, with
\begin{center}
$J_u:=\langle
\sigma^{\beta}(lc(\textbf{g}))c_{\beta,\textbf{g}}|\textbf{g}\in
G, ind(lm(\textbf{g}))=u\, \ \text{and} \ \beta +
\exp(lm(\textbf{g}))=\alpha\rangle$.
\end{center}
\end{enumerate}
\end{theorem}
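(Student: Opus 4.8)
The plan is to establish the cycle of implications (i) $\Rightarrow$ (ii) $\Rightarrow$ (iii) $\Rightarrow$ (iv) $\Rightarrow$ (i), adapting the argument given for left ideals in \cite{Gallego2}. For (i) $\Rightarrow$ (ii): the implication ``$\textbf{f}\xrightarrow{\,\,G\,\,}_{+}\textbf{0}$ implies $\textbf{f}\in M$'' is immediate, since by part (ii) of the Remark following Definition \ref{reductionformodules} each one-step reduction keeps the vector in the same coset modulo $\langle G\rangle$, whence $\textbf{f}=\textbf{f}-\textbf{0}\in\langle G\rangle\subseteq M$. Conversely, let $\textbf{0}\neq\textbf{f}\in M$; since $G$ is a Gr\"obner basis, $\textbf{f}$ is reducible, so $\textbf{f}\xrightarrow{\,\,G\,\,}\textbf{h}_1$ with $lm(\textbf{f})\succ lm(\textbf{h}_1)$ and $\textbf{h}_1\in M$. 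Iterating produces a strictly descending chain $lm(\textbf{f})\succ lm(\textbf{h}_1)\succ lm(\textbf{h}_2)\succ\cdots$; by Proposition \ref{wellorder} the monomial order on $Mon(A^m)$ is a well order, so the chain stabilizes only at $\textbf{0}$, giving $\textbf{f}\xrightarrow{\,\,G\,\,}_{+}\textbf{0}$.

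For (ii) $\Rightarrow$ (iii): given $\textbf{0}\neq\textbf{f}\in M$, by (ii) we have $\textbf{f}\xrightarrow{\,\,G\,\,}_{+}\textbf{0}$, and since $\textbf{f}\neq\textbf{0}$ the first step is a genuine one-step reduction $\textbf{f}\xrightarrow{\,\,G\,\,}\textbf{h}_1$; Definition \ref{reductionformodules} then directly supplies $\textbf{g}_1,\dots,\textbf{g}_t\in G$ with $lm(\textbf{g}_j)\mid lm(\textbf{f})$ and $lc(\textbf{f})=\sum_{j=1}^{t}r_j\sigma^{\alpha_j}(lc(\textbf{g}_j))c_{\alpha_j,\textbf{g}_j}$, which is exactly (iii). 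For (iii) $\Rightarrow$ (iv): fix $\alpha\in\mathbb{N}^n$ and $1\leq u\leq m$. To see $J_u\subseteq\langle\alpha,M\rangle_u$, take $\textbf{g}\in G$ with $ind(lm(\textbf{g}))=u$ and $\beta+\exp(lm(\textbf{g}))=\alpha$; then $x^\beta\textbf{g}\in M$, and Theorem \ref{coefficientes} together with the monomial order axioms yields $lm(x^\beta\textbf{g})=x^\alpha\textbf{e}_u$ and $lc(x^\beta\textbf{g})=\sigma^\beta(lc(\textbf{g}))c_{\beta,\textbf{g}}$, which is nonzero since $\sigma^\beta$ is injective by Proposition \ref{sigmadefinition} and $c_{\beta,\textbf{g}}$ is left invertible; hence this generator of $J_u$ lies in $\langle\alpha,M\rangle_u$. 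For the reverse inclusion, take $\textbf{f}\in M$ with $ind(lm(\textbf{f}))=u$, $\exp(lm(\textbf{f}))=\alpha$, and apply (iii): each generator $\sigma^{\alpha_j}(lc(\textbf{g}_j))c_{\alpha_j,\textbf{g}_j}$ appearing there already lies in $J_u$, so $lc(\textbf{f})\in J_u$.

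For (iv) $\Rightarrow$ (i): let $\textbf{0}\neq\textbf{f}\in M$ and set $\alpha:=\exp(lm(\textbf{f}))$, $u:=ind(lm(\textbf{f}))$. Then $lc(\textbf{f})\in\langle\alpha,M\rangle_u=J_u$, so there exist $\textbf{g}_1,\dots,\textbf{g}_t\in G$ and $\beta_1,\dots,\beta_t\in\mathbb{N}^n$ with $ind(lm(\textbf{g}_j))=u$ and $\beta_j+\exp(lm(\textbf{g}_j))=\alpha$, and $r_1,\dots,r_t\in R$, with $lc(\textbf{f})=\sum_{j=1}^{t}r_j\sigma^{\beta_j}(lc(\textbf{g}_j))c_{\beta_j,\textbf{g}_j}$. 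These are precisely conditions (i) and (ii) of Definition \ref{reductionformodules}, so $\textbf{f}$ admits a one-step reduction by $G$; hence every nonzero element of $M$ is reducible, i.e. $G$ is a Gr\"obner basis.

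I expect the only delicate point to be the termination argument in (i) $\Rightarrow$ (ii), which relies essentially on the well-ordering of Proposition \ref{wellorder} (a strictly descending sequence of leading monomials must be finite); the remaining implications are bookkeeping, translating back and forth between reducibility of $\textbf{f}$ and membership of $lc(\textbf{f})$ in the appropriate left ideal of $R$ by means of the leading-term formula of Theorem \ref{coefficientes} and the monomial order axioms. As with the other results of this section, the full details can be copied almost verbatim from the ideal case in \cite{Gallego2}.
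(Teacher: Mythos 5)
Your proof is correct and follows essentially the same approach as the paper's: reducibility together with the well-ordering of $Mon(A^m)$ gives (i)$\Rightarrow$(ii), the definition of one-step reduction links (i)/(ii) with (iii), and the identification of the leading terms of the vectors $x^{\beta}\textbf{g}$ gives the two inclusions in (iii)$\Leftrightarrow$(iv). The only minor differences are organizational: you arrange the implications in a cycle rather than as pairwise equivalences, you deduce $\textbf{f}\in M$ from $\textbf{f}\xrightarrow{\,\, G\,\, }_{+}\textbf{0}$ via the coset remark following Definition \ref{reductionformodules} instead of the Division Algorithm, and in (iii)$\Rightarrow$(iv) you bypass the paper's appeal to the Noetherianness of $R$ by observing directly that every generator $lc(\textbf{f})$ of $\langle\alpha,M\rangle_u$ already lies in $J_u$.
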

\begin{proof}
(i)$\Rightarrow$ (ii): let $\textbf{\emph{f}}\in M$, if
$\textbf{\emph{f}}=\textbf{0}$, then by definition
$\textbf{\emph{f}}\xrightarrow{\,\, G\,\, }_{+} \textbf{0}$. If
$\textbf{\emph{f}}\neq \textbf{0}$, then there exists
$\textbf{\emph{h}}_1\in A^m$ such that
$\textbf{\emph{f}}\xrightarrow{\,\, G\,\, } \textbf{\emph{h}}_1$,
with $lm(\textbf{\emph{f}})\succ lm(\textbf{\emph{h}}_1)$ and
$\textbf{\emph{f}}-\textbf{\emph{h}}_1\in \langle
G\rangle\subseteq M$, hence $\textbf{\emph{h}}_1\in M$; if
$\textbf{\emph{h}}_1=\textbf{0}$, so we end. If
$\textbf{\emph{h}}_1\neq \textbf{0}$, then we can repeat this
reasoning for $\textbf{\emph{h}}_1$, and since $Mon(A^m)$ is well
ordered, we get that $\textbf{\emph{f}}\xrightarrow{\,\, G\,\,
}_{+} \textbf{0}$.

Conversely, if $\textbf{\emph{f}}\xrightarrow{\,\, G\,\, }_{+}
\textbf{0}$, then by Theorem \ref{algdivformodules}, there exist
$\textbf{\emph{g}}_1,\dots,\textbf{\emph{g}}_t\in G$ and
$q_1,\dots,q_t\in A$ such that
$\textbf{\emph{f}}=q_1\textbf{\emph{g}}_1+\cdots
+q_t\textbf{\emph{g}}_t$, i.e., $\textbf{\emph{f}}\in M$.

(ii)$\Rightarrow$ (i): evident.

(i)$\Leftrightarrow$ (iii): this is a direct consequence of
Definition \ref{reductionformodules}.

(iii)$\Rightarrow$ (iv) Since $R$ is left Noetherian, there exist
$r_1,\dots ,r_s\in R$,
$\textbf{\emph{f}}_1,\dots,\textbf{\emph{f}}_l$ $\in M$ such that
$\langle \alpha ,M\rangle_u=\langle r_1,\dots, r_s\rangle$,
$ind(lm(\textbf{\emph{f}}_i))=u$ and
$\exp(lm(\textbf{\emph{f}}_i))=\alpha$ for each $1\leq i\leq l$,
with $\langle r_1,\dots, r_s\rangle\subseteq $ $\langle
lc(\textbf{\emph{f}}_1),\dots,lc(\textbf{\emph{f}}_l)\rangle$.
Then, $\langle
lc(\textbf{\emph{f}}_1),\dots,lc(\textbf{\emph{f}}_l)\rangle=$
$\langle \alpha ,M\rangle_u$. Let $r\in \langle \alpha
,M\rangle_u$, there exist $a_1,\dots,a_l\in R$ such that
$r=a_1lc(\textbf{\emph{f}}_1)+\cdots+a_llc(\textbf{\emph{f}}_l)$;
by (iii), for each $i$, $1\leq i\leq l$, there exist
$\textbf{\emph{g}}_{1i},\dots,\textbf{\emph{g}}_{t_ii}\in G$ and
$b_{ji}\in R$ such that
$lc(\textbf{\emph{f}}_i)=b_{1i}\sigma^{\alpha_{1i}}(lc(\textbf{\emph{g}}_{1i}))c_{\alpha_{1i},\textbf{\emph{g}}_{1i}}+\cdots
+b_{t_ii}\sigma^{\alpha_{t_ii}}(lc(\textbf{\emph{g}}_{t_ii}))c_{\alpha_{t_ii},\textbf{\emph{g}}_{t_ii}}$,
with
$u=ind(lm(\textbf{\emph{f}}_i))=ind(lm(\textbf{\emph{g}}_{ji}))$
and
$\exp(lm(\textbf{\emph{f}}_i))=\alpha_{ji}+\exp(lm(\textbf{\emph{g}}_{ji}))$,
thus $\langle \alpha ,M\rangle_u\subseteq J_u$. Conversely, if
$r\in J_u$, then
$r=b_{1}\sigma^{\beta_{1}}(lc(\textbf{\emph{g}}_{1}))c_{\beta_{1},\textbf{\emph{g}}_{1}}+\cdots
+b_{t}\sigma^{\beta_{t}}(lc(\textbf{\emph{g}}_{t}))c_{\beta_{t},\textbf{\emph{g}}_{t}}$,
with $b_i\in R$, $\beta_i\in \mathbb{N}^n$,
$\textbf{\emph{g}}_i\in G$ such that
$ind(lm(\textbf{\emph{g}}_i))=u$ and
$\beta_i+\exp(lm(\textbf{\emph{g}}_i))=\alpha$ for any $1\leq
i\leq t$. Note that $x^{\beta_i}\textbf{\emph{g}}_i\in M$,
$ind(lm(x^{\beta_i}\textbf{\emph{g}}_i))=u$,
$\exp(lm(x^{\beta_i}\textbf{\emph{g}}_i))=\alpha$,
$lc(x^{\beta_i}\textbf{\emph{g}}_i)=\sigma^{\beta_{i}}(lc(\textbf{\emph{g}}_{i}))c_{\beta_{i},\textbf{\emph{g}}_{i}}$,
for $1\leq i\leq t$, and
$r=b_1lc(x^{\beta_1}\textbf{\emph{g}}_1)+\cdots+b_tlc(x^{\beta_t}\textbf{\emph{g}}_t)$,
i.e., $r\in \langle \alpha ,M\rangle_u$.

(iv)$\Rightarrow$ (iii): let $\textbf{0}\neq \textbf{\emph{f}}\in
M$ and let $u=ind(lm(\textbf{\emph{f}}))$,
$\alpha=\exp(lm(\textbf{\emph{f}}))$, then
$lc(\textbf{\emph{f}})\in \langle \alpha,M\rangle_u$; by (iv)
$lc(\textbf{\emph{f}})=b_{1}\sigma^{\beta_{1}}(lc(\textbf{\emph{g}}_{1}))c_{\beta_{1},\textbf{\emph{g}}_{1}}+\cdots
+b_{t}\sigma^{\beta_{t}}(lc(\textbf{\emph{g}}_{t}))c_{\beta_{t},\textbf{\emph{g}}_{t}}$,
with $b_i\in R$, $\beta_i\in \mathbb{N}^n$,
$\textbf{\emph{g}}_i\in G$ such that
$u=ind(lm(\textbf{\emph{g}}_i))$ and
$\beta_i+\exp(lm(\textbf{\emph{g}}_i))=\alpha$ for any $1\leq
i\leq t$. From this we conclude that
$lm(\textbf{\emph{g}}_j)|lm(\textbf{\emph{f}})$, $1\leq j\leq t$.
\end{proof}
From this theorem we get the following consequences.
\begin{corollary}\label{1117}
Let $M\neq 0$ be a submodule of $A^m$. Then,
\begin{enumerate}
\item[\rm(i)]If $G$ is a Gröbner basis for $M$, then $M=\langle G\rangle$.
\item[\rm(ii)]Let $G$ be a Gröbner basis for $M$, if $\textbf{f}\in M$ and
$\textbf{f}\xrightarrow{\,\, G\,\, }_{+} \textbf{h}$, with
$\textbf{h}$ reduced w.r.t. $G$, then
$\textbf{h}=\textbf{\emph{0}}$.
\item[\rm(iii)]Let $G=\{\textbf{g}_1,\dots,\textbf{g}_t\}$ be a set of non-zero vectors of $M$ with $lc(\textbf{g}_i)=1$, for each $1\leq i\leq t$, such
that given $\textbf{r}\in M$ there exists $i$ such that
$lm(\textbf{g}_i)$ divides $lm(\textbf{r})$. Then, $G$ is a
Gröbner basis of $M$.
\end{enumerate}
\end{corollary}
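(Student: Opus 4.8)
The plan is to derive the three statements directly from Theorem \ref{teogrobnersigmapbwformodules}, the Division Algorithm (Theorem \ref{algdivformodules}) and the elementary properties of one-step reduction recorded in the remark after Definition \ref{reductionformodules} (in particular: if $\textbf{f}\xrightarrow{\,\,F\,\,}\textbf{h}$ then $\textbf{f}-\textbf{h}\in\langle F\rangle$); no additional machinery is needed. For (i), observe first that $\langle G\rangle\subseteq M$ since $G\subseteq M$ and $M$ is a submodule of $A^m$. For the reverse inclusion I would take $\textbf{f}\in M$ and use the equivalence (i)$\Leftrightarrow$(ii) of Theorem \ref{teogrobnersigmapbwformodules} to get $\textbf{f}\xrightarrow{\,\,G\,\,}_{+}\textbf{0}$; unwinding this finite chain of one-step reductions, the differences between consecutive terms lie in $\langle G\rangle$ and telescope to $\textbf{f}-\textbf{0}=\textbf{f}$, so $\textbf{f}\in\langle G\rangle$. (Alternatively, apply Theorem \ref{algdivformodules} to write $\textbf{f}=q_1\textbf{g}_1+\cdots+q_t\textbf{g}_t+\textbf{h}$ with the remainder $\textbf{h}$ forced to be $\textbf{0}$.) Hence $M=\langle G\rangle$.

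For (ii), the key remark is that $\textbf{h}\in M$: indeed $\textbf{f}-\textbf{h}\in\langle G\rangle\subseteq M$, so $\textbf{h}=\textbf{f}-(\textbf{f}-\textbf{h})\in M$. If $\textbf{h}\neq\textbf{0}$, then, $G$ being a Gr\"obner basis of $M$, $\textbf{h}$ would be reducible w.r.t. $G$, contradicting the assumption that $\textbf{h}$ is reduced w.r.t. $G$. Therefore $\textbf{h}=\textbf{0}$.

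For (iii), I would verify the definition of Gr\"obner basis directly. Let $\textbf{0}\neq\textbf{f}\in M$. Applying the hypothesis with $\textbf{r}:=\textbf{f}$, there is an index $i$ with $lm(\textbf{g}_i)\mid lm(\textbf{f})$, so condition (i) of Definition \ref{reductionformodules} holds with the single vector $\textbf{g}_i$ and the unique $x^{\alpha_i}\in Mon(A)$ such that $\alpha_i+\exp(lm(\textbf{g}_i))=\exp(lm(\textbf{f}))$. For condition (ii) one has to solve $lc(\textbf{f})=r\,\sigma^{\alpha_i}(lc(\textbf{g}_i))\,c_{\alpha_i,\textbf{g}_i}$ in $r\in R$; since $lc(\textbf{g}_i)=1$ and $\sigma^{\alpha_i}$ is a ring endomorphism we have $\sigma^{\alpha_i}(lc(\textbf{g}_i))=1$, so this becomes $lc(\textbf{f})=r\,c_{\alpha_i,\textbf{g}_i}$, and since $c_{\alpha_i,\textbf{g}_i}$ is left invertible by Theorem \ref{coefficientes} the choice $r:=lc(\textbf{f})\,c_{\alpha_i,\textbf{g}_i}'$ works, using $c_{\alpha_i,\textbf{g}_i}'c_{\alpha_i,\textbf{g}_i}=1$. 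Hence every nonzero $\textbf{f}\in M$ is reducible w.r.t. $G$, i.e. $G$ is a Gr\"obner basis of $M$.

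I do not anticipate any genuine obstacle here: all three items are unwindings of the relevant definitions and of Theorem \ref{teogrobnersigmapbwformodules}. The only point requiring a little care is the coefficient equation in (iii), where one must respect the (possibly noncommutative) multiplication of $R$ and use that $c_{\alpha_i,\textbf{g}_i}'$ is a \emph{left} inverse, multiplying $lc(\textbf{f})$ on the right by it so that $r\,c_{\alpha_i,\textbf{g}_i}=lc(\textbf{f})\,c_{\alpha_i,\textbf{g}_i}'c_{\alpha_i,\textbf{g}_i}=lc(\textbf{f})$.
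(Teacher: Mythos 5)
Your proposal is correct and follows exactly the route the paper intends: the corollary is stated as a direct consequence of Theorem \ref{teogrobnersigmapbwformodules} together with the Division Algorithm and the basic facts about one-step reduction (in particular $\textbf{\emph{f}}-\textbf{\emph{h}}\in\langle G\rangle$), and your arguments for (i), (ii) and (iii) are precisely these unwindings, with the coefficient equation in (iii) handled correctly via the left inverse $c_{\alpha_i,\textbf{\emph{g}}_i}'$.
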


\section{Computing Gröbner bases}

The following two theorems are the support for the Buchberger's
algorithm for computing Gröbner bases when $A$ is a
quasi-commutative bijective $\sigma-PBW$ extension The proofs of
these results are as in \cite{Gallego2}.
\begin{definition}\label{BFformodules}
Let $F := \{\textbf{g}_1,\dots,\textbf{g}_s\}\subseteq A^m$ such
that the least common multiple of $\{lm(\textbf{g}_1),\dots
,lm(\textbf{g}_s)\}$, denoted by $\textbf{X}_F$, is non-zero. Let
$\theta\in \mathbb{N}^n$, $\beta_i:=\exp(lm(\textbf{g}_i))$ and
$\gamma_i\in \mathbb{N}^n$ such that
$\gamma_i+\beta_i=\exp(\textbf{X}_F)$, $1\leq i\leq s$.
$B_{F,\theta}$ will denote a finite set of generators of
\begin{center}
$S_{F,\theta}:=Syz_R[\sigma^{\gamma_1+\theta}(lc(\textbf{g}_1))c_{\gamma_1+\theta,\beta_1}
\ \cdots \
\sigma^{\gamma_s+\theta}(lc(\textbf{g}_s))c_{\gamma_s+\theta,\beta_s})]$.
\end{center}
For $\theta=\textbf{\emph{0}}:=(0,\dots,0)$, $S_{F,\theta}$ will
be denoted by $S_F$ and $B_{F,\theta}$ by $B_F$.
\end{definition}
\begin{theorem}\label{partialresultformodules}
Let $M\neq 0$ be a submodule of $A^m$ and let $G$ be a finite
subset of non-zero generators of $M$. Then the following
conditions are equivalent:
\begin{enumerate}
\item[\rm (i)]$G$ is a Gröbner basis of $M$.
\item[\rm (ii)]For all $F:= \{\textbf{g}_1,\dots,\textbf{g}_s\}\subseteq G$, with $\textbf{X}_F\neq \textbf{\emph{0}}$, and for all $\theta\in
\mathbb{N}^n$ and any $(b_1,\dots,b_s)\in B_{F,\theta}$,
\begin{center}
$\sum_{i=1}^sb_ix^{\gamma_i+\theta}\textbf{g}_i\xrightarrow{\,\,
G\,\, }_+ 0$.
\end{center}
\end{enumerate}
In particular, if $G$ is a Gröbner basis of $M$ then for all $F:=
\{\textbf{g}_1,\dots,\textbf{g}_s\}\subseteq G$, with
$\textbf{X}_F\neq \textbf{\emph{0}}$, and any $(b_1,\dots,b_s)\in
B_{F}$,
\begin{center}
$\sum_{i=1}^sb_ix^{\gamma_i}\textbf{g}_i\xrightarrow{\,\, G\,\,
}_+ 0$.
\end{center}
\end{theorem}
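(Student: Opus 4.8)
The plan is to prove the two implications separately; (i)$\Rightarrow$(ii) is immediate and (ii)$\Rightarrow$(i) carries all the weight. For (i)$\Rightarrow$(ii): fix $F=\{\textbf{g}_1,\dots,\textbf{g}_s\}\subseteq G$ with $\textbf{X}_F\neq\textbf{0}$, $\theta\in\mathbb{N}^n$ and $(b_1,\dots,b_s)\in B_{F,\theta}$. Since each $\textbf{g}_i$ lies in $G\subseteq M$ and $M$ is a submodule, $\sum_{i=1}^sb_ix^{\gamma_i+\theta}\textbf{g}_i\in M$, and because $G$ is a Gr\"obner basis of $M$ the equivalence (i)$\Leftrightarrow$(ii) of Theorem \ref{teogrobnersigmapbwformodules} yields $\sum_{i=1}^sb_ix^{\gamma_i+\theta}\textbf{g}_i\xrightarrow{\,G\,}_{+}\textbf{0}$. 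The ``in particular'' clause is the case $\theta=\textbf{0}$.

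For (ii)$\Rightarrow$(i) I would run the classical minimal-representation argument. Let $\textbf{0}\neq\textbf{f}\in M$; since $\langle G\rangle=M$ there are representations $\textbf{f}=\sum_i h_i\textbf{g}_i$ with $h_i\in A$, $\textbf{g}_i\in G$ (repetitions allowed), and among these I fix one for which $\textbf{X}:=\max_i lm(lm(h_i)lm(\textbf{g}_i))$ is $\preceq$-minimal (legitimate, since $Mon(A^m)$ is well ordered by Proposition \ref{wellorder}). One always has $lm(\textbf{f})\preceq\textbf{X}$, and the crux is to show equality. Granting this, set $S:=\{i:lm(lm(h_i)lm(\textbf{g}_i))=\textbf{X}\}$ and $\alpha_i:=\exp(lm(h_i))$; for $i\in S$ one reads off $ind(lm(\textbf{g}_i))=ind(lm(\textbf{f}))$ and $\alpha_i+\exp(lm(\textbf{g}_i))=\exp(lm(\textbf{f}))$, so $lm(\textbf{g}_i)\mid lm(\textbf{f})$, and comparing the coefficients of $\textbf{X}$ on the two sides of the representation gives $lc(\textbf{f})=\sum_{i\in S}lc(h_i)\sigma^{\alpha_i}(lc(\textbf{g}_i))c_{\alpha_i,\textbf{g}_i}$. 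Thus $\textbf{f}$ meets condition (iii) of Theorem \ref{teogrobnersigmapbwformodules} and is reducible; as $\textbf{f}\in M$ was arbitrary, $G$ is a Gr\"obner basis of $M$.

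The heart is therefore the equality $lm(\textbf{f})=\textbf{X}$, which I would establish by contradiction. Suppose $lm(\textbf{f})\prec\textbf{X}$; then the coefficient of $\textbf{X}$ in $\textbf{f}$ is zero, i.e. $\sum_{i\in S}lc(h_i)\sigma^{\alpha_i}(lc(\textbf{g}_i))c_{\alpha_i,\textbf{g}_i}=0$. Put $F:=\{\textbf{g}_i:i\in S\}$; the $lm(\textbf{g}_i)$, $i\in S$, share the index $ind(\textbf{X})$ and each divides $\textbf{X}$, so $\textbf{X}_F\neq\textbf{0}$ and $\textbf{X}_F\mid\textbf{X}$. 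With $\beta_i:=\exp(lm(\textbf{g}_i))$, $\gamma_i$ as in Definition \ref{BFformodules}, and $\theta:=\exp(\textbf{X})-\exp(\textbf{X}_F)\in\mathbb{N}^n$, subtracting $\gamma_i+\beta_i=\exp(\textbf{X}_F)$ from $\alpha_i+\beta_i=\exp(\textbf{X})$ gives $\alpha_i=\gamma_i+\theta$ for $i\in S$; hence the displayed relation says exactly $(lc(h_i))_{i\in S}\in S_{F,\theta}$, so $(lc(h_i))_{i\in S}=\sum_k d_k(b_{k,i})_{i\in S}$ with $d_k\in R$ and $(b_{k,i})_i\in B_{F,\theta}$. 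Each $x^{\gamma_i+\theta}\textbf{g}_i$ has leading monomial $\textbf{X}$ (because $\gamma_i+\theta+\beta_i=\exp(\textbf{X})$), so $\textbf{v}_k:=\sum_{i\in S}b_{k,i}x^{\gamma_i+\theta}\textbf{g}_i$ has $lm(\textbf{v}_k)\prec\textbf{X}$ or $\textbf{v}_k=\textbf{0}$, its $\textbf{X}$-coefficient being the $k$-th syzygy relation. By hypothesis (ii), $\textbf{v}_k\xrightarrow{\,G\,}_{+}\textbf{0}$, and reading off this reduction chain, exactly as in the proof of the Division Algorithm (Theorem \ref{algdivformodules}), writes $\textbf{v}_k$ as an $A$-combination of elements of $G$ in which every summand has leading-monomial product $\preceq lm(\textbf{v}_k)\prec\textbf{X}$. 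Finally, since $lt(h_i)=lc(h_i)x^{\gamma_i+\theta}$ and $lc(h_i)=\sum_k d_kb_{k,i}$ for $i\in S$, one has $\sum_{i\in S}lt(h_i)\textbf{g}_i=\sum_k d_k\textbf{v}_k$; substituting this into $\textbf{f}=\sum_{i\in S}(h_i-lt(h_i))\textbf{g}_i+\sum_{i\notin S}h_i\textbf{g}_i+\sum_{i\in S}lt(h_i)\textbf{g}_i$ produces a representation of $\textbf{f}$ every leading-monomial product of which is $\prec\textbf{X}$, contradicting the minimality of $\textbf{X}$.

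The main obstacle is precisely that last verification: checking that no term of the rewritten representation climbs back up to $\textbf{X}$. It rests on three routine but indispensable points: strict monotonicity $x^a\prec x^b\Rightarrow x^{a+c}\prec x^{b+c}$ (together with the compatibility of the order on $Mon(A^m)$ with that on $Mon(A)$ within a fixed index), which handles the summands $(h_i-lt(h_i))\textbf{g}_i$, $i\in S$; the defining property of $S$, which handles the $h_i\textbf{g}_i$, $i\notin S$; and the output bound of Theorem \ref{algdivformodules} together with the fact that a scalar $d_k\in R$ has trivial leading monomial, which handles $\sum_k d_k\textbf{v}_k$. None of these is deep, but all are needed; with them in place the argument is a direct transcription of the one for left ideals in \cite{Gallego2}.
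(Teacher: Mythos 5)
Your proof is correct and follows essentially the same route as the paper, which omits the argument and defers to \cite{Gallego2}: the easy direction via Theorem \ref{teogrobnersigmapbwformodules}, and for (ii)$\Rightarrow$(i) the standard minimal-representation argument, identifying the leading coefficients at the maximal monomial $\textbf{\emph{X}}$ as an element of $S_{F,\theta}$ and using the hypothesis to rewrite the representation below $\textbf{\emph{X}}$, contradicting minimality. The auxiliary points you flag (strict monotonicity, degree compatibility, the bound extracted from a reduction chain as in Theorem \ref{algdivformodules}) are exactly the ingredients used in the ideal-case proof being adapted, so no genuinely different method is involved.
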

\begin{theorem}\label{maintheoremsigmapbwformodules}
Let $A$ be a quasi-commutative bijective $\sigma-PBW$ extension.
Let $M\neq 0$ be a submodule of $A^m$ and let $G$ be a finite
subset of non-zero generators of $M$. Then the following
conditions are equivalent:
\begin{enumerate}
\item[\rm (i)]$G$ is a Gröbner basis of $M$.
\item[\rm (ii)]For all $F:= \{\textbf{g}_1,\dots,\textbf{g}_s\}\subseteq G$, with $\textbf{X}_F\neq \textbf{\emph{0}}$,
and any $(b_1,\dots,b_s)\in B_{F}$,
\begin{center}
$\sum_{i=1}^sb_ix^{\gamma_i}\textbf{g}_i\xrightarrow{\,\, G\,\,
}_+ \textbf{\emph{0}}$.
\end{center}
\end{enumerate}
\end{theorem}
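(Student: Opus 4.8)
The plan is to derive the statement from Theorem~\ref{partialresultformodules}, which already characterizes Gr\"obner bases by the requirement that
\[
\sum_{i=1}^s b_i x^{\gamma_i+\theta}\textbf{g}_i\xrightarrow{G}_{+}\textbf{0}
\]
for \emph{every} $F=\{\textbf{g}_1,\dots,\textbf{g}_s\}\subseteq G$ with $\textbf{X}_F\neq\textbf{0}$, \emph{every} $\theta\in\mathbb{N}^n$, and every $(b_1,\dots,b_s)\in B_{F,\theta}$. Granting this, the implication (i)$\Rightarrow$(ii) is just the last assertion of Theorem~\ref{partialresultformodules} taken at $\theta=\textbf{0}$ (recall $B_F=B_{F,\textbf{0}}$). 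So the real content is (ii)$\Rightarrow$(i): one must show that, for a quasi-commutative bijective $A$, verifying the $\theta=\textbf{0}$ reductions already forces them for all $\theta\in\mathbb{N}^n$.

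First I would fix $F=\{\textbf{g}_1,\dots,\textbf{g}_s\}\subseteq G$ with $\textbf{X}_F\neq\textbf{0}$ and set $\beta_i:=\exp(lm(\textbf{g}_i))$, $\mu:=\exp(\textbf{X}_F)$, $\gamma_i:=\mu-\beta_i$, $a_i:=lc(\textbf{g}_i)$, $r_i:=\sigma^{\gamma_i}(a_i)c_{\gamma_i,\beta_i}$ and $r_i^{(\theta)}:=\sigma^{\gamma_i+\theta}(a_i)c_{\gamma_i+\theta,\beta_i}$, so that $S_F=Syz_R[r_1\ \cdots\ r_s]$ and $S_{F,\theta}=Syz_R[r_1^{(\theta)}\ \cdots\ r_s^{(\theta)}]$. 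The heart of the matter — and the step I expect to be the main obstacle — is the identity
\[
r_i^{(\theta)}=c_{\theta,\gamma_i}^{-1}\,\sigma^\theta(r_i)\,c_{\theta,\mu},\qquad 1\leq i\leq s,
\]
which I would establish by writing $\sigma^\theta(r_i)=\sigma^\theta(\sigma^{\gamma_i}(a_i))\,\sigma^\theta(c_{\gamma_i,\beta_i})$ and then feeding in the two identities of Remark~\ref{identities}(ii) with $\gamma=\gamma_i$, $\beta=\beta_i$ (so $\gamma_i+\beta_i=\mu$), after which all the $c_{\theta,\gamma_i}^{\pm1}$ collapse. Here it is essential that $A$ is bijective, so that every $c_{\alpha,\beta}$ is invertible (Remark~\ref{identities}(iv)) and each $\sigma_i$, hence $\sigma^\theta$, is a ring automorphism; and it is essential that the factor $c_{\theta,\mu}$ is independent of $i$, precisely because $\gamma_i+\beta_i=\mu$ for all $i$.

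Granting the displayed identity, multiplying on the right by the unit $c_{\theta,\mu}^{-1}$ and applying the automorphism $(\sigma^\theta)^{-1}$ shows that $(b_1,\dots,b_s)\in S_{F,\theta}$ if and only if $\big((\sigma^\theta)^{-1}(b_i c_{\theta,\gamma_i}^{-1})\big)_i\in S_F$. Consequently, if $B_F$ is a finite generating set of $S_F$, then $\{(\sigma^\theta(b_i)c_{\theta,\gamma_i})_i : (b_1,\dots,b_s)\in B_F\}$ is a finite generating set of $S_{F,\theta}$, and this is the set I would take as $B_{F,\theta}$ (it is enough to verify the condition of Theorem~\ref{partialresultformodules}(ii) for this particular choice of the generating sets $B_{F,\theta}$). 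Then, for $(b_1,\dots,b_s)\in B_F$, the quasi-commutative rules $c_{\theta,\gamma_i}x^{\gamma_i+\theta}=x^\theta x^{\gamma_i}$ and $\sigma^\theta(b_i)x^\theta=x^\theta b_i$ (instances of Remark~\ref{identities}(iii) together with Theorem~\ref{coefficientes}) give
\[
\sum_{i=1}^s \sigma^\theta(b_i)\,c_{\theta,\gamma_i}\,x^{\gamma_i+\theta}\textbf{g}_i=\sum_{i=1}^s x^\theta\big(b_i x^{\gamma_i}\textbf{g}_i\big)=x^\theta\Big(\sum_{i=1}^s b_i x^{\gamma_i}\textbf{g}_i\Big).
\]
By hypothesis (ii) the inner sum reduces to $\textbf{0}$, and since $A$ is quasi-commutative, Proposition~\ref{xtetafformodules}(iv) then gives $x^\theta\big(\sum_i b_i x^{\gamma_i}\textbf{g}_i\big)\xrightarrow{G}_{+}\textbf{0}$. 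Hence every generator of $S_{F,\theta}$ in the chosen $B_{F,\theta}$ produces a combination that reduces to $\textbf{0}$; as $F$ and $\theta$ were arbitrary, Theorem~\ref{partialresultformodules}(ii) holds and therefore $G$ is a Gr\"obner basis of $M$, which is (i). Beyond the cancellation in the displayed identity, the only point needing care is the verification that $(\sigma^\theta)^{-1}$ is a ring homomorphism, so that generators of $S_F$ genuinely transport to generators of $S_{F,\theta}$.
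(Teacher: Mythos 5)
Your proposal is correct, and it is essentially the intended argument: the paper defers to the ideal case in \cite{Gallego2}, where the quasi-commutative bijective theorem is likewise deduced from the analogue of Theorem \ref{partialresultformodules} by using the identities of Remark \ref{identities}(ii), the invertibility of the $c_{\alpha,\beta}$ and the bijectivity of $\sigma^\theta$ to transport a generating set $B_F$ of $S_F$ to one of $S_{F,\theta}$ via $(b_i)\mapsto(\sigma^\theta(b_i)c_{\theta,\gamma_i})$, and then applying the quasi-commutative case of Proposition \ref{xtetafformodules}. Your verification of the key identity $r_i^{(\theta)}=c_{\theta,\gamma_i}^{-1}\sigma^\theta(r_i)c_{\theta,\mu}$ and of the semilinear transport of generators is exactly the content of that omitted proof.
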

\begin{corollary}\label{algorithmforquasiformodules}
Let $A$ be a quasi-commutative bijective $\sigma-PBW$ extension.
Let $F=\{\textbf{f}_1,\dots ,\textbf{f}_s\}$ be a set of non-zero
vectors of $A^m$. The algorithm below produces a Gröbner basis for
the submodule $\langle \textbf{f}_1,\dots ,\textbf{f}_s\rangle$
{\rm(}$P(X)$ denotes the set of subsets of the set $X${\rm)}:
\begin{center}
\fbox{\parbox[c]{11cm}{
\begin{center}
{\rm \textbf{Gröbner Basis Algorithm for Modules\\
over Quasi-Commutative Bijective $\sigma-PBW$ Extensions}}
\end{center}
\begin{description}
\item[]{\rm \textbf{INPUT}:} $F := \{\textbf{f}_1,\dots,\textbf{f}_s\}\subseteq A^m$,
$\textbf{f}_i\neq \textbf{\emph{0}}$, $1\leq i\leq s$
\item[]{\rm \textbf{OUTPUT}:} $G=\{\textbf{g}_1,\dots ,\textbf{g}_t\}$ a Gröbner basis for $\langle F\rangle$
\item[]{\rm \textbf{INITIALIZATION}:} $G:=\emptyset, G':=F$
\item[]{\rm \textbf{WHILE}} $G'\neq G$ {\rm \textbf{DO}}
\begin{quote}$D:=P(G')-P(G)$

\smallskip

$G:=G'$

\smallskip

{\rm \textbf{FOR}} each $S:=\{\textbf{g}_{i_1},\dots
,\textbf{g}_{i_k}\}\in D$, with $\textbf{X}_S\neq
\textbf{\emph{0}}$, {\rm \textbf{DO}}

\smallskip

\begin{quote}
Compute $B_S$

\smallskip
{\rm \textbf{FOR}} each $\textbf{b}=(b_1,\dots ,b_k)\in B_S$ {\rm
\textbf{DO}}
\begin{quote}Reduce $\sum_{j=1}^kb_jx^{\gamma_j}\textbf{g}_{i_j}\xrightarrow{\,\, G'\,\, }_+ \textbf{r}$,
with $\textbf{r}$ reduced with respect to $G'$ and $\gamma_j$
defined as in Definition \ref{BFformodules}
\begin{quote}{\rm \textbf{IF}} $\textbf{r}\neq \textbf{\emph{0}}$ {\rm \textbf{THEN}}
\begin{quote}$G':=G'\cup \{\textbf{r}\}$
\end{quote}
\end{quote}
\end{quote}
\end{quote}
\end{quote}
\end{description}}}
\end{center}
\end{corollary}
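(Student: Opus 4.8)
The plan is to prove the two statements that together give correctness of a Buchberger-type procedure: \emph{partial correctness} (if the algorithm halts, the returned $G$ is a Gröbner basis of $\langle F\rangle$) and \emph{termination} (the \textbf{WHILE} loop is executed only finitely many times). Partial correctness will be reduced to Theorem \ref{maintheoremsigmapbwformodules}, and termination to the fact that $A^m$ is a Noetherian $A$-module — which holds since $A$ is bijective and $R$, being $LGS$, is left Noetherian, so Theorem \ref{hilbertbasisforgpbw} applies.

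For partial correctness I would first record two loop invariants: at every stage $F\subseteq G'$, and $\langle G'\rangle=\langle F\rangle$ (each adjoined vector $\textbf{r}$ is a remainder of $\sum_j b_jx^{\gamma_j}\textbf{g}_{i_j}\in\langle G'\rangle$, and a remainder differs from its input by an element of the current span, so the span is unchanged). Hence the output $G$ is a finite set of non-zero generators of $M:=\langle F\rangle$. Next I would check that over the successive passes of the loop every subset of the final $G$ is processed: pass $1$ handles all subsets of $F$, pass $k{+}1$ handles $P(G'_{k+1})-P(G'_{k})$, so the union of the processed subsets is $P(G)$. Therefore, for every $S=\{\textbf{g}_{i_1},\dots,\textbf{g}_{i_k}\}\subseteq G$ with $\textbf{X}_S\neq\textbf{0}$ and every $\textbf{b}=(b_1,\dots,b_k)\in B_S$ (with $\gamma_j$ as in Definition \ref{BFformodules}), the algorithm reduced $\sum_j b_jx^{\gamma_j}\textbf{g}_{i_j}\xrightarrow{\,\,G'\,\,}_{+}\textbf{r}$ for some $G'\subseteq G$; if $\textbf{r}=\textbf{0}$ this already exhibits reduction to $\textbf{0}$ by $G$, and if $\textbf{r}\neq\textbf{0}$ then $\textbf{r}\in G$ and the single extra step $\textbf{r}\xrightarrow{\,\,\{\textbf{r}\}\,\,}\textbf{0}$ gives $\sum_j b_jx^{\gamma_j}\textbf{g}_{i_j}\xrightarrow{\,\,G\,\,}_{+}\textbf{0}$ (a reduction by a subset of $G$ is a reduction by $G$). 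So condition (ii) of Theorem \ref{maintheoremsigmapbwformodules} holds for $G$, i.e.\ $G$ is a Gröbner basis of $M$.

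For termination I would argue by contradiction. If the loop never stops it adjoins infinitely many non-zero vectors $\textbf{r}_0,\textbf{r}_1,\dots$; let $G_k$ be the value of $G'$ just before $\textbf{r}_k$ is added, so $G_{k+1}=G_k\cup\{\textbf{r}_k\}$, $\textbf{r}_k\in\langle G_k\rangle$, and $\textbf{r}_k$ is reduced with respect to $G_k$. For finite $H\subseteq A^m\setminus\{\textbf{0}\}$ let $N(H)$ be the $A$-submodule of $A^m$ generated by $\{lt(\textbf{h}):\textbf{h}\in H\}$; clearly $N(G_0)\subseteq N(G_1)\subseteq\cdots$. The crux is strictness: $lt(\textbf{r}_k)\notin N(G_k)$. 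Here quasi-commutativity and bijectivity enter: by Remark \ref{identities}(iii),(iv), $x^{\gamma}(cx^{\beta}\textbf{e}_u)=\sigma^{\gamma}(c)c_{\gamma,\beta}x^{\gamma+\beta}\textbf{e}_u$ with no lower-degree correction and with nonzero coefficient, so $x^{\gamma}lt(\textbf{h})=lt(x^{\gamma}\textbf{h})$, and expanding in the $R$-basis $Mon(A)$ one sees that the coefficient of the monomial $x^{\alpha}\textbf{e}_u$ in any element of $N(G_k)$ lies in the left ideal $\langle\,\sigma^{\delta}(lc(\textbf{h}))c_{\delta,\exp(lm(\textbf{h}))}\mid \textbf{h}\in G_k,\ ind(lm(\textbf{h}))=u,\ \delta+\exp(lm(\textbf{h}))=\alpha\,\rangle$. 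Now $lt(\textbf{r}_k)=lc(\textbf{r}_k)x^{\alpha}\textbf{e}_u$ with $u=ind(lm(\textbf{r}_k))$, $\alpha=\exp(lm(\textbf{r}_k))$; if $lt(\textbf{r}_k)\in N(G_k)$ then $lc(\textbf{r}_k)$ lies in that left ideal, hence $lc(\textbf{r}_k)$ is expressible exactly as in Definition \ref{reductionformodules}(ii), making $\textbf{r}_k$ reducible w.r.t.\ $G_k$ — contradicting that $\textbf{r}_k\neq\textbf{0}$ is reduced. Thus $N(G_0)\subsetneq N(G_1)\subsetneq\cdots$ is an infinite strictly ascending chain of submodules of the Noetherian $A$-module $A^m$, which is impossible; so the loop halts.

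I expect the termination step to be the main obstacle. The invariant bookkeeping for partial correctness and the reduction-to-$\textbf{0}$ manipulations are routine and parallel \cite{Gallego2}; the substantive point is to identify the right ascending chain — the submodule $N(G_k)$ of leading terms — and to extract from ``$\textbf{r}_k$ is reduced'' that its leading term escapes $N(G_k)$. This extraction depends on the clean leading-term arithmetic (no lower-order garbage upon multiplication by a monomial, non-vanishing of the structure constants $c_{\gamma,\beta}$) that is available only in the quasi-commutative bijective case, which is exactly why Theorem \ref{maintheoremsigmapbwformodules} and hence this algorithm are stated under that hypothesis.
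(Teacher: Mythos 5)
Your proof is correct and follows essentially the route the paper intends: the paper gives no explicit proof, deferring to \cite{Gallego2}, where the argument is exactly your two-step scheme --- partial correctness by verifying condition (ii) of Theorem \ref{maintheoremsigmapbwformodules} for the output set (using the loop invariants $\langle G'\rangle=\langle F\rangle$ and the exhaustion of $P(G)$ over the passes), and termination from left Noetherianity of $A$ (Theorem \ref{hilbertbasisforgpbw}) via a strictly ascending chain of leading-term submodules, with quasi-commutativity and bijectivity guaranteeing that a reduced non-zero remainder has leading term outside the previous chain member. No gaps to report.
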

From Theorem \ref{hilbertbasisforgpbw} and the previous corollary
we get the following direct conclusion.
\begin{corollary}\label{existenceformodules}
Let $A$ be a quasi-commutative bijective $\sigma-PBW$ extension.
Then each submodule of $A^m$ has a Gröbner basis.
\end{corollary}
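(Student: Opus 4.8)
The proof I would give is short, as it merely assembles two results already at hand. First I would dispose of the trivial case: if $M=0$, then by the convention stated just after the definition of Gr�bner basis, $\{\textbf{0}\}$ is a Gr�bner basis for $M$, so I may from now on assume $M\neq 0$.

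The second step is to observe that $M$ is finitely generated. Under the standing hypotheses of the paper $R$ is an $LGS$ ring, hence left Noetherian by Definition \ref{LGSring}(i); since moreover $A$ is bijective, Theorem \ref{hilbertbasisforgpbw} yields that $A$ is a left Noetherian ring. Consequently $A^m$, being generated over $A$ by the finitely many canonical vectors $\textbf{\emph{e}}_1,\dots,\textbf{\emph{e}}_m$, is a Noetherian $A$-module, and therefore its submodule $M$ is generated by finitely many non-zero vectors, say $M=\langle\textbf{f}_1,\dots,\textbf{f}_s\rangle$.

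The third and last step is to feed $F:=\{\textbf{f}_1,\dots,\textbf{f}_s\}$ into the Gr�bner Basis Algorithm of Corollary \ref{algorithmforquasiformodules}: since $A$ is quasi-commutative and bijective, that corollary guarantees that the algorithm returns a finite set $G$ which, by Theorem \ref{maintheoremsigmapbwformodules}, is a Gr�bner basis of $\langle F\rangle=M$; this is exactly what has to be shown. The single point in this argument that is not purely formal is the termination of the algorithm, i.e., that the outer \textbf{WHILE} loop is executed only finitely many times; this is built into the statement of Corollary \ref{algorithmforquasiformodules}, and if one wished to reprove it the key would again be the Noetherianity of $A^m$ furnished by Theorem \ref{hilbertbasisforgpbw}, used through a suitable ascending chain argument for the left ideals of leading coefficients attached to the successive sets $G'$ (in the spirit of Theorem \ref{teogrobnersigmapbwformodules}(iv)). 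Thus every submodule of $A^m$ possesses a Gr�bner basis, which would complete the proof.
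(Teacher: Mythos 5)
Your proposal is correct and follows essentially the same route as the paper, which obtains the corollary directly from Theorem \ref{hilbertbasisforgpbw} (Noetherianity of $A$, hence of $A^m$, giving finite generation of any submodule) combined with the algorithm of Corollary \ref{algorithmforquasiformodules}. The extra details you supply (the trivial case $M=0$ and the remark on termination) are harmless elaborations of what the paper leaves implicit.
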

Now we will illustrate with an example the algorithm presented in
Corollary \ref{algorithmforquasiformodules}.
\begin{example}\label{231}
We will consider the multiplicative analogue of the Weyl algebra
\[A:=\mathcal{O}_3(\lambda_{21}, \lambda_{31}, \lambda_{32}) = \mathcal{O}_3\left(2, \frac{1}{2}, 3\right) = \sigma(\mathbb{Q}[x_1])\langle x_2, x_3 \rangle ,\]
hence we have the relations
\[x_2 x_1 = \lambda_{21}x_1 x_2 = 2x_1 x_2,  \ \text{so} \ \sigma_2(x_1) = 2 x_1 \ \text{and} \ \delta_2(x_1) = 0,\]
\[x_3 x_1 = \lambda_{31}x_1 x_3 = \frac{1}{2}x_1 x_3,  \ \text{so} \ \sigma_3(x_1) = \frac{1}{2} x_1 \ \text{and} \ \delta_3(x_1) = 0,\]
\[x_3 x_2 = \lambda_{32}x_2 x_3 = 3x_2 x_3,  \ \text{so} \ c_{2,3} = 3, \]
and for $r \in \mathbb{Q}$, $\sigma_2(r) = r=\sigma_3(r)$. We
choose in  $Mon(A)$ the deglex order with $x_2
> x_3$ and in $Mon(A^2)$ the TOPREV order with  $\boldsymbol{e}_1 \succ
\boldsymbol{e}_2$.

Let $\boldsymbol{f}_1 = x_1^2x_2^2\boldsymbol{e}_1 +
x_2x_3\boldsymbol{e}_2$, $lm(\boldsymbol{f}_1) =
x_2^2\boldsymbol{e}_1$ and $\boldsymbol{f}_2 =
2x_1x_2x_3\boldsymbol{e}_1 + x_2\boldsymbol{e}_2$,
$lm(\boldsymbol{f}_2) = x_2x_3\boldsymbol{e}_1$. We will construct
a Gröbner basis for the module $M := \langle \boldsymbol{f}_1,
\boldsymbol{f}_2\rangle$.

\textit{Step 1}: we start with $G := \emptyset$, $G' := \{\boldsymbol{f}_1, \boldsymbol{f}_2\}$. Since $G' \ne G$, we make\\
$D:= {\mathcal{P}}(G') - {\mathcal{P}}(G)= \{S_1, S_2, S_{1,
2}\}$, with $S_1:= \{\boldsymbol{f}_1\}, S_2:=
\{\boldsymbol{f}_2\}, S_{1, 2}:= \{\boldsymbol{f}_1,
\boldsymbol{f}_2\}$. We also make $G: = G'$, and for every $S \in
D$ such that $\boldsymbol{X}_S \ne \textbf{0}$ we compute $B_{S}$:

$\centerdot$ For $S_1$ we have
\[Syz_{\mathbb{Q}[x_1]}[\sigma^{\gamma_1}(lc(\boldsymbol{f}_1))c_{\gamma_1, \beta_1}],\]
where $\beta_1 =$ $\exp(lm(\boldsymbol{f}_1)) = (2, 0)$;
$\boldsymbol{X}_{S_1} = l.c.m. \{lm(\boldsymbol{f}_1)\} =
lm(\boldsymbol{f}_1) = x_2^2\boldsymbol{e}_1$;
$\exp(\boldsymbol{X}_{S_1}) = (2, 0)$; $\gamma_1 =$
$\exp(\boldsymbol{X}_{S_1}) - \beta_1$ = (0, 0);
$x^{\gamma_1}x^{\beta_1} = x_2^2$, so $c_{\gamma_1, \beta_1} = 1$.
Then,
\begin{align*}
\sigma^{\gamma_1}(lc(\boldsymbol{f}_1))c_{\gamma_1, \beta_1} &=
\sigma^{\gamma_1}(x_1^2) 1 = \sigma_2^0 \sigma_3^0(x_1^2)= x_1^2.
\end{align*}
Thus, $Syz_{\mathbb{Q}[x_1]}[x_1^2] = \{0\}$ and $B_{S_{1}} =
\{0\}$, i.e., we do not add any vector to $G'$.

$\centerdot$ For $S_2$ we have an identical situation.

$\centerdot$ For $S_{1, 2}$ we compute
\[Syz_{\mathbb{Q}[x_1]}[\sigma^{\gamma_1}(lc(\boldsymbol{f}_1))c_{\gamma_1, \beta_1} \hspace{0.3 cm} \sigma^{\gamma_2}
(lc(\boldsymbol{f}_2))c_{\gamma_2, \beta_2}],\] where $\beta_1 =$
$\exp(lm(\boldsymbol{f}_1)) = (2, 0)$ and $\beta_2 =$
$\exp(lm(\boldsymbol{f}_2)) = (1, 1)$;

$\boldsymbol{X}_{S_{1,2}} = l.c.m. \{lm(\boldsymbol{f}_1),
lm(\boldsymbol{f}_2)\} = l.c.m.(x_2^2\boldsymbol{e}_1,
x_2x_3\boldsymbol{e}_1) = x_2^2x_3\boldsymbol{e}_1$;

$\exp(\boldsymbol{X}_{S_{1,2}})= (2, 1)$; $\gamma_1 =$
$\exp(\boldsymbol{X}_{S_{1,2}}) - \beta_1$ = (0, 1) and $\gamma_2
=$ $\exp(\boldsymbol{X}_{S_{1,2}}) - \beta_2$ = (1, 0);
$x^{\gamma_1}x^{\beta_1} = x_3x_2^2 = 3x_2x_3x_2 = 9x_2^2x_3$, so
$c_{\gamma_1, \beta_1} = 9$; in a similar way
$x^{\gamma_2}x^{\beta_2} = x_2^2x_3$, i.e., $c_{\gamma_2, \beta_2}
= 1$. Then,
\begin{align*}
\sigma^{\gamma_1}(lc(\boldsymbol{f}_1))c_{\gamma_1, \beta_1} &=
\sigma^{\gamma_1}(x_1^2)9 = \sigma_2^0 \sigma_3(x_1^2)9 =
(\sigma_3(x_1)\sigma_3(x_1))9= \frac{9}{4}x_1^2
\end{align*}
and
\begin{align*}
\sigma^{\gamma_2}(lc(\boldsymbol{f}_2))c_{\gamma_2, \beta_2} &=
\sigma^{\gamma_2}(2x_1) 1 = \sigma_2 \sigma_3^0(2x_1) =
\sigma_2(2x_1)= 4x_1.
\end{align*}
Hence $Syz_{\mathbb{Q}[x_1]}[\frac{9}{4}x_1^2 \hspace{0.3 cm}
4x_1] = \{(b_1, b_2) \in \mathbb{Q}[x_1]^2 \mid
b_1(\frac{9}{4}x_1^2) + b_2(4x_1) = 0\}$ and $B_{S_{1,2}} = \{(4,
- \frac{9}{4}x_1)\}$. From this we get
\begin{align*}
4x^{\gamma_1}\boldsymbol{f}_1 - \frac{9}{4}x_1x^{\gamma_2}\boldsymbol{f}_2 &= 4x_3(x_1^2x_2^2\boldsymbol{e}_1 + x_2x_3\boldsymbol{e}_2) - \frac{9}{4}x_1x_2(2x_1x_2x_3\boldsymbol{e}_1 + x_2\boldsymbol{e}_2)\\
&= 4x_3x_1^2x_2^2\boldsymbol{e}_1 + 4x_3x_2x_3\boldsymbol{e}_2 - \frac{9}{4}x_1x_22x_1x_2x_3\boldsymbol{e}_1 - \frac{9}{4}x_1x_2^2\boldsymbol{e}_2\\
&= 9x_1^2x_2^2x_3\boldsymbol{e}_1 + 12x_2x_3^2\boldsymbol{e}_2 - 9x_1^2x_2^2x_3\boldsymbol{e}_1 - \frac{9}{4}x_1x_2^2\boldsymbol{e}_2 \\
&= 12x_2x_3^2\boldsymbol{e}_2 -
\frac{9}{4}x_1x_2^2\boldsymbol{e}_2 := \boldsymbol{f}_3,
\end{align*}
so $lm(\boldsymbol{f}_3) = x_2x_3^2\boldsymbol{e}_2$. We observe
that $\boldsymbol{f}_3$ is reduced with respect to $G'$. We make
$G' := G' \cup \{\boldsymbol{f}_3\}$, i.e., $G' =
\{\boldsymbol{f}_1, \boldsymbol{f}_2, \boldsymbol{f}_3\}$.

\textit{Step 2}: since $G =\{\boldsymbol{f}_1, \boldsymbol{f}_2\}
\ne G' = \{\boldsymbol{f}_1, \boldsymbol{f}_2,
\boldsymbol{f}_3\}$, we make $D:= {\mathcal{P}}(G') -
{\mathcal{P}}(G)$, i.e., $D:= \{S_3, S_{1, 3}, S_{2, 3}, S_{1, 2,
3}\}$, where $S_1:= \{\boldsymbol{f}_1\}, S_{1, 3}:=
\{\boldsymbol{f}_1, \boldsymbol{f}_3\}, S_{2, 3}:=
\{\boldsymbol{f}_2, \boldsymbol{f}_3\}, S_{1, 2, 3}:=
\{\boldsymbol{f}_1, \boldsymbol{f}_2, \boldsymbol{f}_3\}$. We make
$G := G'$, and for every $S \in D$ such that $\boldsymbol{X}_S \ne
\textbf{0}$ we must compute $B_{S}$. Since
$\boldsymbol{X}_{S_{1,3}} = \boldsymbol{X}_{S_{2,3}}=
\boldsymbol{X}_{S_{1,2,3}}= \textbf{0}$, we only need to consider
$S_3$.

$\centerdot$ We have to compute
\[Syz_{\mathbb{Q}[x_1]}[\sigma^{\gamma_3}(lc(\boldsymbol{f}_3))c_{\gamma_3, \beta_3}],\]
where $\beta_3 =$ $\exp(lm(\boldsymbol{f}_3)) = (1, 2)$;
$\boldsymbol{X}_{S_{3}}= l.c.m. \{lm(\boldsymbol{f}_3)\} =
lm(\boldsymbol{f}_3) = x_2x_3^2\boldsymbol{e}_2$;
$\exp(\boldsymbol{X}_{S_{3}}) = (1, 2)$; $\gamma_3 =$
$\exp(\boldsymbol{X}_{S_{3}}) - \beta_3$ = (0, 0);
$x^{\gamma_3}x^{\beta_3} = x_2x_3^2$, so $c_{\gamma_3, \beta_3} =
1$. Hence
\begin{align*}
\sigma^{\gamma_3}(lc(\boldsymbol{f}_3))c_{\gamma_3, \beta_3} &=
\sigma^{\gamma_3}(12) 1 = \sigma_2^0 \sigma_3^0(12) = 12,
\end{align*}
and $Syz_{\mathbb{Q}[x_1]}[12] = \{0\}$, i.e., $B_{S_{3}} =
\{0\}$. This means that we not add any vector to $G'$ and hence $G
= \{\boldsymbol{f}_1, \boldsymbol{f}_2, \boldsymbol{f}_3\}$ is a
Gröbner basis for $M$.
\end{example}

\section{Syzygy of a module}\label{31}
We present in this section a method for computing the syzygy
module of a submodule $M=\langle \textbf{\emph{f}}_1,\dots
,\textbf{\emph{f}}_s\rangle$ of $A^m$ using Gröbner bases. This
implies that we have a method for computing such bases. Thus, we
will assume that $A$ is a bijective quasi-commutative $\sigma$-PBW
extension.

Let $f$ be the canonical homomorphism defined by
\begin{align*}
A^s& \xrightarrow{f} A^m\\
\textbf{\emph{e}}_j& \mapsto \textbf{\emph{f}}_j
\end{align*}
where $\{\textbf{\emph{e}}_1,\dots,\textbf{\emph{e}}_s\}$ is the
canonical basis of $A^s$. Observe that $f$ can be represented by a
matrix, i.e., if $\textbf{\emph{f}}_j:=(f_{1j},\dots,f_{mj})^T$,
then the matrix of $f$ in the canonical bases of $A^s$ and $A^m$
is
\begin{center}
$F:=\begin{bmatrix}\textbf{\emph{f}}_1 & \cdots &
\textbf{\emph{f}}_s\end{bmatrix}=
\begin{bmatrix}
f_{11} & \cdots & f_{1s}\\
\vdots & & \vdots\\
f_{m1} & \cdots & f_{ms}
\end{bmatrix}\in M_{m\times s}(A)$.
\end{center}
Note that $Im(f)$ is the column module of $F$, i.e., the left
$A$-module generated by the columns of $F$:
\begin{center}
$Im(f)=\langle
f(\textbf{\emph{e}}_1),\dots,f(\textbf{\emph{e}}_s)\rangle=\langle
\textbf{\emph{f}}_1,\dots ,\textbf{\emph{f}}_s \rangle =\langle F
\rangle$.
\end{center}
Moreover, observe that if $\textbf{\emph{a}}:=(a_1,\dots,a_s)^T\in
A^s$, then
\begin{equation}
f(\textbf{\emph{a}})=(\textbf{\emph{a}}^TF^T)^T.
\end{equation}
In fact,
\begin{align*}
f(\textbf{\emph{a}})&=a_1f(\textbf{\emph{e}}_1)+\cdots+a_sf(\textbf{\emph{e}}_s)=a_1\textbf{\emph{f}}_1+\cdots+a_s\textbf{\emph{f}}_s\\
&=a_1\begin{bmatrix}f_{11}\\ \vdots \\
f_{m1}\end{bmatrix}+\cdots+a_s\begin{bmatrix}f_{1s}\\ \vdots \\
f_{ms}\end{bmatrix}\\
&=\begin{bmatrix}a_1f_{11}+\cdots +a_sf_{1s}\\
\vdots \\
a_1f_{m1}+\cdots +a_sf_{ms}
\end{bmatrix}\\
& =(\begin{bmatrix}a_1 & \cdots & a_s\end{bmatrix}
\begin{bmatrix}f_{11} & \cdots & f_{m1}\\
\vdots & & \vdots \\f_{1s} & \cdots & f_{ms}\end{bmatrix})^T\\
&=(\textbf{\emph{a}}^TF^T)^T.
\end{align*}

We recall that
\begin{center}
$Syz(\{\textbf{\emph{f}}_1,\dots
,\textbf{\emph{f}}_s\}):=\{\textbf{\emph{a}}:=(a_1,\dots,a_s)^T\in
A^s|a_1\textbf{\emph{f}}_1+\cdots+a_s\textbf{\emph{f}}_s=\textbf{0}\}$.
\end{center}
Note that
\begin{equation}
Syz(\{\textbf{\emph{f}}_1,\dots ,\textbf{\emph{f}}_s\})=\ker(f),
\end{equation}
but $Syz(\{\textbf{\emph{f}}_1,\dots ,\textbf{\emph{f}}_s\})\neq
\ker(F)$ since we have
\begin{equation}\label{313}
\textbf{\emph{a}}\in Syz(\{\textbf{\emph{f}}_1,\dots
,\textbf{\emph{f}}_s\})\Leftrightarrow
\textbf{\emph{a}}^TF^T=\textbf{0}.
\end{equation}
The modules of syzygies of $M$ and $F$ are defined by
\begin{equation}
Syz(M):=Syz(F):=Syz(\{\textbf{\emph{f}}_1,\dots
,\textbf{\emph{f}}_s\}).
\end{equation}
The generators of $Syz(F)$ can be disposed into a matrix, so
sometimes we will refer to $Syz(F)$ as a matrix. Thus, if $Syz(F)$
is generated by $r$ vectors, $\textbf{\emph{z}}_1, \dots,
\textbf{\emph{z}}_r$, then
\begin{center}
$Syz(F)=\langle \textbf{\emph{z}}_1, \dots,
\textbf{\emph{z}}_r\rangle$,
\end{center}
and we will use the following matrix notation
\begin{center}
$Syz(F):=Z(F):=\begin{bmatrix}\textbf{\emph{z}}_1 & \cdots &
\textbf{\emph{z}}_r
\end{bmatrix}=\begin{bmatrix}z_{11} & \cdots & z_{1r}\\
\vdots & & \vdots\\
z_{s1} & \cdots & z_{sr}\end{bmatrix}\in M_{s\times r}(A)$,
\end{center}
thus we have
\begin{equation}
Z(F)^TF^T=0.
\end{equation}
Let $G:=\{\textbf{\emph{g}}_1,\dots,\textbf{\emph{g}}_t\}$ be a
Gröbner basis of $M$, then from Division Algorithm and Corollary
\ref{1117}, there exist polynomials $q_{ij}\in A$, $1\leq i\leq
t$, $1\leq j\leq s$ such that
\begin{align*}
\textbf{\emph{f}}_1 = & \,q_{11}\textbf{\emph{g}}_1+\cdots +q_{t1}\textbf{\emph{g}}_t\\
\vdots &\\
\textbf{\emph{f}}_s = & \,q_{1s}\textbf{\emph{g}}_1+\cdots
+q_{ts}\textbf{\emph{g}}_t,
\end{align*}
i.e.,
\begin{equation}\label{316}
F^T=Q^TG^T,
\end{equation}
with
\begin{center}
$Q:=[q_{ij}]=\begin{bmatrix}q_{11} & \cdots &
q_{1s}\\
\vdots &  & \vdots\\
q_{t1} & \cdots & q_{ts}
\end{bmatrix}$, $G:=\begin{bmatrix}\textbf{\emph{g}}_1 & \cdots & \textbf{\emph{g}}_t\end{bmatrix}:=\begin{bmatrix}g_{11} & \cdots &
g_{1t}\\
\vdots &  & \vdots\\
g_{m1} & \cdots & g_{mt}
\end{bmatrix}$.
\end{center}
From (\ref{316}) we get
\begin{equation}\label{317}
Z(F)^TQ^TG^T=0.
\end{equation}
From the algorithm of Corollary \ref{algorithmforquasiformodules}
we observe that each element of $G$ can be expressed as an
$A$-linear combination of columns of $F$, i.e., there exists
polynomials $h_{ji}\in A$ such that
\begin{align*}
\textbf{\emph{g}}_1 = & \,h_{11}\textbf{\emph{f}}_1+\cdots +h_{s1}\textbf{\emph{f}}_s\\
\vdots &\\
\textbf{\emph{g}}_t = & \,h_{1t}\textbf{\emph{f}}_1+\cdots
+h_{st}\textbf{\emph{f}}_s,
\end{align*}
so we have
\begin{equation}\label{318}
G^T=H^TF^T,
\end{equation}
with
\begin{center}
$H:=[h_{ji}]=\begin{bmatrix}h_{11} & \cdots &
h_{1t}\\
\vdots &  & \vdots\\
h_{s1} & \cdots & h_{st}
\end{bmatrix}$.
\end{center}
The next theorem will prove that $Syz(F)$ can be calculated using
$Syz(G)$, and in turn, Lemma \ref{syzygiesforG} below will
establish that for quasi-commutative bijective $\sigma-PBW$
extensions, $Syz(G)$ can be computed using $Syz(L_G)$, where
\begin{center}
$L_G:=\begin{bmatrix}lt(\textbf{\emph{g}}_1) & \cdots &
lt(\textbf{\emph{g}}_t)\end{bmatrix}$.
\end{center}
Suppose that $Syz(L_G)$ is generated by $l$ elements,
\begin{equation}\label{319a}
Syz(L_G):=Z(L_G):=\begin{bmatrix}\textbf{\emph{z}}_1'' & \cdots &
\textbf{\emph{z}}_l''\end{bmatrix}=\begin{bmatrix}z_{11}''
& \cdots & z_{1l}''\\
\vdots & & \vdots\\
z_{t1}'' & \cdots & z_{tl}''
\end{bmatrix}.
\end{equation}
The proof of Lemma \ref{syzygiesforG} will show that $Syz(G)$ can
be generated also by $l$ elements, say,
$\textbf{\emph{z}}_1',\dots,\textbf{\emph{z}}_l'$, i.e.,
$Syz(G)=\langle
\textbf{\emph{z}}_1',\dots,\textbf{\emph{z}}_l'\rangle$; we write
\begin{center}
$Syz(G):=Z(G):=\begin{bmatrix}\textbf{\emph{z}}_1' & \cdots &
\textbf{\emph{z}}_l'
\end{bmatrix}=\begin{bmatrix}z_{11}' & \cdots & z_{1l}'\\
\vdots & & \vdots\\
z_{t1}' & \cdots & z_{tl}'\end{bmatrix}\in M_{t\times l}(A)$,
\end{center}
and hence
\begin{equation}\label{319}
Z(G)^TG^T=0.
\end{equation}
\begin{theorem}\label{syzygiesmainresult}
With the above notation, $Syz(F)$ coincides with the column module
of the extended matrix $\begin{bmatrix}(Z(G)^TH^T)^T &
I_s-(Q^TH^T)^T\end{bmatrix}$, i.e., in a matrix notation
\begin{equation}
Syz(F)=\begin{bmatrix}(Z(G)^TH^T)^T & I_s-(Q^TH^T)^T\end{bmatrix}.
\end{equation}
\end{theorem}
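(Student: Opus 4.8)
The plan is to prove the stated equality of left $A$-modules by establishing the two inclusions separately, working throughout with the transpose description of syzygies from (\ref{313}): a column vector $\textbf{a}\in A^{s}$ lies in $Syz(F)$ if and only if $\textbf{a}^{T}F^{T}=\textbf{0}$, and more generally all columns of a matrix $W$ with $s$ rows are syzygies of $F$ exactly when $W^{T}F^{T}=0$. Write $W_{1}:=(Z(G)^{T}H^{T})^{T}$ and $W_{2}:=I_{s}-(Q^{T}H^{T})^{T}$, so the claim is $Syz(F)=\langle W_{1},W_{2}\rangle$, the column module of $\begin{bmatrix}W_{1}&W_{2}\end{bmatrix}$.

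First I would verify $\langle W_{1},W_{2}\rangle\subseteq Syz(F)$ by a direct substitution. Since $W_{1}^{T}=Z(G)^{T}H^{T}$, identity (\ref{318}) gives $W_{1}^{T}F^{T}=Z(G)^{T}(H^{T}F^{T})=Z(G)^{T}G^{T}$, which is $0$ by (\ref{319}). Since $W_{2}^{T}=I_{s}-Q^{T}H^{T}$, identity (\ref{318}) gives $W_{2}^{T}F^{T}=F^{T}-Q^{T}(H^{T}F^{T})=F^{T}-Q^{T}G^{T}$, which is $0$ by (\ref{316}). Hence every column of $\begin{bmatrix}W_{1}&W_{2}\end{bmatrix}$ is a syzygy of $F$.

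For the reverse inclusion I would take an arbitrary $\textbf{a}\in Syz(F)$, so $\textbf{a}^{T}F^{T}=\textbf{0}$, and note that $\textbf{b}:=(\textbf{a}^{T}Q^{T})^{T}$ is a syzygy of $G$, since $\textbf{b}^{T}G^{T}=\textbf{a}^{T}(Q^{T}G^{T})=\textbf{a}^{T}F^{T}=\textbf{0}$ by (\ref{316}). As $A$ is left Noetherian, $Syz(G)$ is finitely generated, and by hypothesis it is the column module of $Z(G)$ (in the effective version these generators come from $Z(L_{G})$ through Lemma \ref{syzygiesforG}); thus $\textbf{b}^{T}=\textbf{d}^{T}Z(G)^{T}$ for some row vector $\textbf{d}^{T}$ over $A$. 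Now the trivial decomposition
\[\textbf{a}^{T}=\textbf{a}^{T}\bigl(I_{s}-Q^{T}H^{T}\bigr)+\textbf{a}^{T}Q^{T}H^{T},\]
together with $\textbf{a}^{T}Q^{T}H^{T}=\textbf{b}^{T}H^{T}=\textbf{d}^{T}\bigl(Z(G)^{T}H^{T}\bigr)$, gives
\[\textbf{a}^{T}=\textbf{d}^{T}W_{1}^{T}+\textbf{a}^{T}W_{2}^{T},\]
which says exactly that $\textbf{a}$ is a left $A$-linear combination of the columns of $W_{1}$ and $W_{2}$; this finishes the argument.

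The step I expect to require the most care is not any single computation --- each is a one-line manipulation of the three defining identities (\ref{316}), (\ref{318}), (\ref{319}) --- but the consistent handling of transposes forced by the left-module convention. Because $A$ is noncommutative, the statement ``$\textbf{a}$ is a left combination of the columns of $W$'' must be written as $\textbf{a}^{T}=\textbf{c}^{T}W^{T}$ rather than $\textbf{a}=W\textbf{c}$, so one has to keep every product on the correct side and make sure the final identity for $\textbf{a}^{T}$ really exhibits a left combination of columns. Once this is respected, the proof is in essence the identity $I_{s}=(I_{s}-Q^{T}H^{T})+Q^{T}H^{T}$ transported to the module of syzygies.
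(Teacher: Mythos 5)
Your proposal is correct and follows essentially the same route as the paper: both directions rest on the identities (\ref{316}), (\ref{318}), (\ref{319}), and the key step is the same decomposition $\textbf{\emph{a}}^{T}=\textbf{\emph{a}}^{T}Q^{T}H^{T}+\textbf{\emph{a}}^{T}(I_{s}-Q^{T}H^{T})$ with $\textbf{\emph{a}}^{T}Q^{T}$ recognized as a syzygy of $G$ and rewritten through $Z(G)$. The only cosmetic difference is that you treat an arbitrary element of $Syz(F)$ rather than a generating column of $Z(F)$, which changes nothing of substance.
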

\begin{proof}
Let $\textbf{\emph{z}}:=(z_1,\dots, z_s)^T$ be one of generators
of $Syz(F)$, i.e., one of columns of $Z(F)$, then by (\ref{313})
$\textbf{\emph{z}}^TF^T=\textbf{0}$, and by (\ref{316}) we have
$\textbf{\emph{z}}^TQ^TG^T=\textbf{0}$. Let
$\textbf{\emph{u}}:=(\textbf{\emph{z}}^TQ^T)^T$, then
$\textbf{\emph{u}}\in Syz(G)$ and there exists polynomials
$w_1,\dots,w_l\in A$ such that
$\textbf{\emph{u}}=w_1\textbf{\emph{z}}_1'+\cdots
+w_l\textbf{\emph{z}}_l'$, i.e.,
$\textbf{\emph{u}}=(\textbf{\emph{w}}^TZ(G)^T)^T$, with
$\textbf{\emph{w}}:=(w_1,\dots,w_l)^T$. Then,
$\textbf{\emph{u}}^TH^T=(\textbf{\emph{w}}^TZ(G)^T)H^T$, i.e.,
$\textbf{\emph{z}}^TQ^TH^T=(\textbf{\emph{w}}^TZ(G)^T)H^T$ and
from this we have
\begin{align*}
\textbf{\emph{z}}^T&
=\textbf{\emph{z}}^TQ^TH^T+\textbf{\emph{z}}^T
-\textbf{\emph{z}}^TQ^TH^T\\
&
=\textbf{\emph{z}}^TQ^TH^T+\textbf{\emph{z}}^T(I_s-Q^TH^T)\\
& =(\textbf{\emph{w}}^TZ(G)^T)H^T+\textbf{\emph{z}}^T(I_s-Q^TH^T).
\end{align*}
From this can be checked that $\textbf{\emph{z}}\in \langle
\begin{bmatrix}(Z(G)^TH^T)^T &
I_s-(Q^TH^T)^T\end{bmatrix}\rangle$.

Conversely, from (\ref{318}) and (\ref{319}) we have
$(Z(G)^TH^T)F^T=Z(G)^T(H^TF^T)=Z(G)^TG^T=0$, but this means that
each column of $(Z(G)^TH^T)^T$ is in $Syz(F)$. In a similar way,
from (\ref{318}) and (\ref{316}) we get
$(I_s-Q^TH^T)F^T=F^T-Q^TH^TF^T=F^T-Q^TG^T=F^T-F^T=0$, i.e., each
column of $(I_s-Q^TH^T)^T$ is also in $Syz(F)$. This complete the
proof.
\end{proof}
Our next task is to compute $Syz(L_G)$. Let
$L=[c_1\textbf{\emph{X}}_1\cdots c_t\textbf{\emph{X}}_t]$ be a
matrix of size $m\times t$, where
$\textbf{\emph{X}}_1=X_1\textbf{\emph{e}}_{i_1},\dots
,\textbf{\emph{X}}_t=X_t\textbf{\emph{e}}_{i_t}$ are monomials of
$A^m$, $c_1,\dots, c_t\in A-\{0\}$ and $1\leq i_1,\dots ,i_t\leq
m$. We note that some indexes $i_1,\dots ,i_t$ could be equals.
\begin{definition}\label{314}
We say that a syzygy $\textbf{h}=(h_1,\dots, h_t)^T\in Syz(L)$ is
homogeneous of degree $\textbf{X}=X\textbf{e}_i$, where $X\in
Mon(A)$ and $1\leq i\leq m$, if
\begin{enumerate}
\item[\rm(i)]$h_j$ is a term, for each $1\leq j\leq t$.
\item[\rm(ii)]For each $1\leq j\leq t$, either $h_j=0$ or if $h_j\neq 0$ then $lm(lm(h_j)\textbf{X}_j)=\textbf{X}$.
\end{enumerate}
\end{definition}
\begin{proposition}\label{315}
Let $L$ be as above. For quasi-commutative $\sigma-PBW$
extensions, $Syz(L)$ has a finite generating set of homogeneous
syzygies.
\end{proposition}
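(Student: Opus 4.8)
The plan is to use the fact that $L$ consists of terms, so that $Syz(L)$ is graded by $Mon(A^m)$, and then to harvest a finite generating set from this grading.

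First I would show that every syzygy splits into homogeneous syzygies. Let $\mathbf{h}=(h_1,\dots,h_t)^T\in Syz(L)$, so $\sum_{j=1}^{t}h_jc_j\mathbf{X}_j=\mathbf{0}$. Since $A$ is quasi-commutative, Remark \ref{identities}(iii) gives, for a term $dx^{\gamma}$ of $h_j$, that $dx^{\gamma}\,c_j\mathbf{X}_j=d\,\sigma^{\gamma}(c_j)\,c_{\gamma,\exp(X_j)}\,x^{\gamma+\exp(X_j)}\mathbf{e}_{i_j}$, a term whose monomial is $lm(x^{\gamma}\mathbf{X}_j)$. For each $\mathbf{W}=W\mathbf{e}_i\in Mon(A^m)$ let $h_j^{(\mathbf{W})}$ be the unique term $dx^{\gamma}$ of $h_j$ (and $0$ if there is none) with $i_j=i$ and $\gamma+\exp(X_j)=\exp(W)$, and put $\mathbf{h}^{(\mathbf{W})}:=(h_1^{(\mathbf{W})},\dots,h_t^{(\mathbf{W})})^T$. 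Then $\mathbf{h}=\sum_{\mathbf{W}}\mathbf{h}^{(\mathbf{W})}$ is a finite sum, each $\mathbf{h}^{(\mathbf{W})}$ is homogeneous of degree $\mathbf{W}$ in the sense of Definition \ref{314}, and each $\mathbf{h}^{(\mathbf{W})}$ is again a syzygy: the nonzero summands of $\sum_jh_j^{(\mathbf{W})}c_j\mathbf{X}_j$ are all $R$-multiples of the single monomial $x^{\exp(W)}\mathbf{e}_i$, and since $A^m$ is $R$-free on $Mon(A^m)$, the identity $\mathbf{0}=\sum_{\mathbf{W}}\big(\sum_jh_j^{(\mathbf{W})}c_j\mathbf{X}_j\big)$ forces each inner sum to vanish. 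Hence $Syz(L)$ is generated, as a left $A$-module, by the set of all its homogeneous syzygies.

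It then remains to extract a \emph{finite} generating subset. Within the standing hypotheses of this section ($A$ bijective) the quickest route is: $R$ is left Noetherian by Definition \ref{LGSring}(i), hence $A$ is left Noetherian by Theorem \ref{hilbertbasisforgpbw}, hence the submodule $Syz(L)\subseteq A^t$ is finitely generated, say by $\mathbf{p}_1,\dots,\mathbf{p}_k$; replacing each $\mathbf{p}_\ell$ by the finite family of its homogeneous components (which still generate it) yields a finite generating set of $Syz(L)$ consisting of homogeneous syzygies, which proves the proposition. For later computational use one prefers an explicit such set: for each nonempty $S\subseteq\{1,\dots,t\}$ whose $\mathbf{X}_j$ ($j\in S$) share a common index, form the least common multiple $\mathbf{X}_S$ of $\{\mathbf{X}_j:j\in S\}$, put $\beta_j:=\exp(X_j)$ and choose $\gamma_j^{S}$ with $\gamma_j^{S}+\beta_j=\exp(\mathbf{X}_S)$, and pick a finite generating set of $Syz_R\big[(\sigma^{\gamma_j^{S}}(c_j)c_{\gamma_j^{S},\beta_j})_{j\in S}\big]$ (it exists by Definition \ref{LGSring}(iii)); each generator $(b_j)_{j\in S}$ lifts to the vector of $A^t$ with $j$-th entry $b_jx^{\gamma_j^{S}}$ for $j\in S$ and $0$ elsewhere, and a routine quasi-commutative computation (as in Definition \ref{BFformodules}) shows this lift is a homogeneous syzygy of $L$ of degree $\mathbf{X}_S$. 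Since there are finitely many such $S$, all these lifts form a finite set $\mathcal B$ of homogeneous syzygies, and by the first step it suffices to express an arbitrary homogeneous syzygy $\mathbf{h}$ of degree $\mathbf{X}$ through $\mathcal B$.

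The hard part is precisely this last matching: if $S$ is the support of $\mathbf{h}$ then $\mathbf{X}_S$ divides $\mathbf{X}$, one writes $\exp(\mathbf{X})=\exp(\mathbf{X}_S)+\theta$ with $\gamma_j=\theta+\gamma_j^{S}$ for $j\in S$, and one must check the factorization $\sigma^{\gamma_j}(c_j)c_{\gamma_j,\beta_j}=c_{\theta,\gamma_j^{S}}^{-1}\,\sigma^{\theta}\!\big(\sigma^{\gamma_j^{S}}(c_j)c_{\gamma_j^{S},\beta_j}\big)\,c_{\theta,\exp(\mathbf{X}_S)}$, using the two identities of Remark \ref{identities}(ii) and the invertibility of $c_{\theta,\gamma_j^{S}}$ and $c_{\theta,\exp(\mathbf{X}_S)}$ (Remark \ref{identities}(iv), $A$ bijective). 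Once this is in hand, the injectivity and surjectivity of $\sigma^{\theta}$ turn the syzygy condition on $\mathbf{h}$ into one on the $Syz_R$ attached to $S$, and a direct rewrite gives $\mathbf{h}=\sum_{\ell}r_\ell\,(x^{\theta}\mathbf{s}_\ell)$ with $r_\ell\in R$ and $\mathbf{s}_\ell\in\mathcal B$ the lifts associated to $S$, so $\mathbf{h}\in\langle\mathcal B\rangle$. Everything else — the homogeneous decomposition and the reduction to a finiteness statement — is formal, and, as noted, the finiteness can even be obtained in one line from left Noetherianity of $A$; only this coefficient bookkeeping genuinely uses the structure (bijectivity and the commutation identities) of the extension.
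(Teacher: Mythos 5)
Your proof is correct and takes essentially the same route as the paper: left Noetherianity of $A$ (valid under the section's standing bijectivity assumption) makes $Syz(L)\subseteq A^t$ finitely generated, and quasi-commutativity (a term times a term is a term) together with the $R$-freeness of $A^m$ on $Mon(A^m)$ splits each generator into finitely many homogeneous syzygies, which is exactly the paper's grouping argument reducing to Lemma 4.2.2 of \cite{Loustaunau}. Your final paragraph on lifting generators of the coefficient syzygies over saturated subsets is not needed for this proposition; it is the content of Lemma \ref{syzforlt}, which the paper proves separately.
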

\begin{proof}Since $A^t$ is a Noetherian module, $Syz(L)$ is a finitely generated submodule of $A^t$.
So, it is enough to prove that each generator
$\textbf{\emph{h}}=(h_1,\dots,h_t)^T$ of $Syz(L)$ is a finite sum
of homogeneous syzygies of $Syz(L)$. We have
$h_1c_1X_1\textbf{\emph{e}}_{i_1}+\cdots
+h_tc_tX_t\textbf{\emph{e}}_{i_t}=\textbf{0}$, and we can group
together summands according to equal canonical vectors such that
$\textbf{\emph{h}}$ can be expressed as a finite sum of syzygies
of $Syz(L)$. We observe that each of such syzygies have null
entries for those places $j$ where $\textbf{\emph{e}}_{i_j}$ does
not coincide with the canonical vector of its group. The idea is
to prove that each of such syzygies is a sum of homogeneous
syzygies of $Syz(L)$. But this means that we have reduced the
problem to Lemma 4.2.2 of \cite{Loustaunau}, where the canonical
vector is the same for all entries. We include the proof for
completeness.

So, let $\textbf{\emph{f}}=(f_1,\dots ,f_t)^T\in Syz(c_1X_1,\dots
,c_tX_t)$, then $f_1c_1X_1+\cdots +f_tc_tX_t=0$; we expand each
polynomial $f_j$ as a sum of $u$ terms (adding zero summands, if
it is necessary):
\begin{center}
$f_j=a_{1j}Y_1+\cdots +a_{uj}Y_u$,
\end{center} where $a_{lj}\in
R$ and $Y_1\succ Y_2\succ \cdots \succ Y_u\in Mon(A)$ are the
different monomials we found in $f_1,\dots ,f_t$, $1\leq j\leq t$.
Then,
\begin{center}
$(a_{11}Y_1+\cdots +a_{u1}Y_u)c_1X_1+\cdots +(a_{1t}Y_1+\cdots +
a_{ut}Y_u)c_tX_t=0$.
\end{center}
Since $A$ is quasi-commutative, the product of two terms is a
term, so in the previous relation we can assume that there are
$d\leq tu$ different monomials, $Z_1, \ldots, Z_d$. Hence,
completing with zero entries (if it is necessary), we can write
\begin{center}
$\textbf{\emph{f}}=(b_{11}Y_{11},\dots ,b_{1t}Y_{1t})^T+\cdots
+(b_{d1}Y_{d1},\dots ,b_{dt}Y_{dt})^T$,
\end{center}
where $(b_{k1}Y_{k1},\dots ,b_{kt}Y_{kt})^T\in Syz(c_1X_1,\dots
,c_tX_t)$ is homogeneous of degree $Z_k$, $1\leq k\leq d$.
\end{proof}
\begin{definition}
Let $\textbf{X}_1,\dots,\textbf{X}_t\in Mon(A^m)$ and let
$J\subseteq \{1,\dots ,t\}$. Let
\begin{center}
$\textbf{X}_J=lcm\{\textbf{X}_j|j\in J\}$.
\end{center}
We say that $J$ is saturated with respect to
$\{\textbf{X}_1,\dots,\textbf{X}_t\}$, if
\begin{center}
$\textbf{X}_j|\textbf{X}_J\Rightarrow j\in J$,
\end{center}
for any $j\in \{1,\dots ,t\}$. The saturation $J'$ of $J$ consists
of all $j\in \{1,\dots ,t\}$ such that
$\textbf{X}_j|\textbf{X}_J$.
\end{definition}
\begin{lemma}\label{syzforlt}
Let $L$ be as above. For quasi-commutative bijective $\sigma-PBW$
extensions, a homogeneous generating set for $Syz(L)$ is
\begin{center}
$\{\textbf{s}_v^J|J\subseteq \{1,\dots ,t\}\,\text{is saturated
with respect to}\, \{\textbf{X}_1,\dots,\textbf{X}_t\}\, , 1\leq
v\leq r_J\}$,
\end{center}
where
\[
\textbf{s}_v^J= \sum_{j \in J} b_{vj}^J x^{\gamma_j}\textbf{e}_j,
\]
with $\gamma_j\in \mathbb{N}^n$ such that
$\gamma_j+\beta_j=\exp(\textbf{X}_J)$,
$\beta_j=\exp(\textbf{X}_j)$, $j\in J$, and $\textbf{b}_v^{J}:=(
b_{vj}^J)_{j\in J}$, with $B^J:=\{\textbf{b}_1^J,\dots
,\textbf{b}_{r_J}^J\}$ is a set of generators for $Syz_R
[\sigma^{\gamma_j}(c_j)c_{\gamma_j, \beta_j} \mid j \in J]$.
\end{lemma}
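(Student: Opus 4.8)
The plan is to combine Proposition \ref{315} with an explicit ``lifting'' of homogeneous syzygies. By Proposition \ref{315} it suffices to show that every homogeneous syzygy $\textbf{h}=(h_1,\dots,h_t)^T\in Syz(L)$, say of degree $\textbf{X}=X\textbf{e}_i$ with $\alpha:=\exp(\textbf{X})$, lies in the left $A$-submodule generated by the $\textbf{s}_v^{J'}$. First I write $h_j=r_jx^{\delta_j}$ (with $r_j\in R$, or $h_j=0$); homogeneity forces $i_j=i$ and $\delta_j+\beta_j=\alpha$ for every $j$ in the support $J_0$ of $\textbf{h}$. Let $J$ be the saturation of $J_0$; since each $\textbf{X}_j$ ($j\in J_0$) divides $\textbf{X}$ and saturation does not change the least common multiple, $\textbf{X}_J=\textbf{X}_{J_0}$ divides $\textbf{X}$, so $\rho:=\exp(\textbf{X})-\exp(\textbf{X}_J)\in\mathbb{N}^n$ and $\delta_j=\rho+\gamma_j$ for all $j\in J$, with $\gamma_j$ as in the statement (and $r_j=0$ for $j\in J\setminus J_0$).

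Next I transport the syzygy relation into $R$. Reading off the $i$-th coordinate of $L\textbf{h}=\textbf{0}$ and using quasi-commutativity ($p_{\alpha,r}=0$ and $p_{\alpha,\beta}=0$, Remark \ref{identities}(iii)) gives $\sum_{j\in J}r_j\,\sigma^{\delta_j}(c_j)\,c_{\delta_j,\beta_j}=0$. Applying both cocycle identities of Remark \ref{identities}(ii) together with the invertibility of all the $c_{\theta,\eta}$ (Remark \ref{identities}(iv)), one rewrites each summand as $\sigma^{\delta_j}(c_j)c_{\delta_j,\beta_j}=c_{\rho,\gamma_j}^{-1}\,\sigma^{\rho}\!\big(\sigma^{\gamma_j}(c_j)c_{\gamma_j,\beta_j}\big)\,c_{\rho,\exp(\textbf{X}_J)}$. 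Since the rightmost factor is a unit of $R$ independent of $j$, it cancels and I obtain $(r_j\,c_{\rho,\gamma_j}^{-1})_{j\in J}\in Syz_R[\sigma^{\rho}(a_j)\mid j\in J]$, where $a_j:=\sigma^{\gamma_j}(c_j)c_{\gamma_j,\beta_j}$.

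Now bijectivity enters a second time: each $\sigma_i$, hence $\sigma^{\rho}$, is a ring automorphism of $R$, so $\sigma^{\rho}$ carries $Syz_R[a_j\mid j\in J]$ onto $Syz_R[\sigma^{\rho}(a_j)\mid j\in J]$, and the latter is generated as a left $R$-module by $\{\sigma^{\rho}(\textbf{b}_v^J)\}_{v}$. Writing $(r_j\,c_{\rho,\gamma_j}^{-1})_{j\in J}=\sum_v e_v\,\sigma^{\rho}(\textbf{b}_v^J)$ with $e_v\in R$, a short computation of $x^{\rho}\textbf{s}_v^J$ — pushing $x^{\rho}$ past the scalars by quasi-commutativity, so that its $j$-th entry is $\sigma^{\rho}(b_{vj}^J)c_{\rho,\gamma_j}x^{\delta_j}$ — yields $\textbf{h}=\sum_v (e_v x^{\rho})\,\textbf{s}_v^J$. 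Since each $\textbf{s}_v^J$ is itself homogeneous (of degree $\textbf{X}_J$) and lies in $Syz(L)$, and the set of pairs $(J,v)$ with $J$ saturated is finite, this exhibits $\{\textbf{s}_v^J\}$ as a finite homogeneous generating set of $Syz(L)$.

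The routine parts are the reduction to a single canonical vector $\textbf{e}_i$ (already built into Proposition \ref{315}), the passage from $J_0$ to its saturation, and the verification that each $\textbf{s}_v^J$ genuinely is a syzygy. The delicate point — the main obstacle — is the second paragraph: arranging the identities for the $c_{\theta,\eta}$'s and for the composites $\sigma^{\rho}\sigma^{\gamma_j}$ against $\sigma^{\delta_j}$ so that the $x^{\rho}$-lifted relation is \emph{exactly} $Syz_R[\sigma^{\rho}(a_j)]$, with every noncommutative correction term an invertible element of $R$ that can be absorbed or cancelled. Bijectivity is used precisely to make all those corrections invertible and to make $\sigma^{\rho}$ surjective.
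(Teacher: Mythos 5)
Your proposal is correct and follows essentially the same route as the paper's proof: reduce to homogeneous syzygies via Proposition \ref{315}, use the two identities of Remark \ref{identities}(ii) together with invertibility of the $c_{\theta,\gamma}$ to convert the leading-coefficient relation into $\bigl(r_jc_{\rho,\gamma_j}^{-1}\bigr)_{j\in J}\in Syz_R[\sigma^{\rho}(\sigma^{\gamma_j}(c_j)c_{\gamma_j,\beta_j})\mid j\in J]$, untwist by bijectivity of $\sigma^{\rho}$, and reassemble $\textbf{h}=\sum_v (e_v x^{\rho})\,\textbf{s}_v^{J}$, which is exactly the paper's $\sum_v\sigma^{\theta}(a_v)x^{\theta}\textbf{s}_v^{J'}$ with $\rho=\theta$ and $e_v=\sigma^{\theta}(a_v)$. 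The only cosmetic differences are that you saturate the support at the outset (the paper saturates after deriving the relation) and that you phrase the untwisting as $\sigma^{\rho}$ mapping $Syz_R[a_j]$ onto $Syz_R[\sigma^{\rho}(a_j)]$ rather than choosing preimages $d_j'$.
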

\begin{proof}
First note that $\textbf{\emph{s}}_v^J$ is a homogeneous syzygy of
$Syz(L)$ of degree $\textbf{\emph{X}}_J$ since each entry of
$\textbf{\emph{s}}_v^J$ is a term, for each non-zero entry we have
$lm(x^{\gamma_j}\textbf{\emph{X}}_j)=\textbf{\emph{X}}_J$, and
moreover, if $i_J:=ind(\textbf{\emph{X}}_J)$, then
\begin{align*}
((\textbf{\emph{s}}_v^J)^TL^T)^T &= \sum_{j \in J} b_{vj}^J
x^{\gamma_j}c_j \textbf{\emph{X}}_j
= \sum_{j \in J} b_{vj}^J \sigma^{\gamma_j}(c_j)x^{\gamma_j} \textbf{\emph{X}}_j \\
&= (\sum_{j \in J} (b_{vj}^J \sigma^{\gamma_j}(c_j)c_{\gamma_j,
\beta_j})x^{\gamma_j + \beta_j})\textbf{\emph{e}}_{i_J}\\
&=\textbf{0}.
\end{align*}
On the other hand, let $\textbf{\emph{h}}\in Syz(L)$, then by
Proposition \ref{315}, $Syz(L)$ is generated by homogeneous
syzygies, so we can assume that $\textbf{\emph{h}}$ is a
homogeneous syzygy of some degree
$\textbf{\emph{Y}}=Y\textbf{\emph{e}}_i$, $Y:=x^{\alpha}$. We will
represent $\textbf{\emph{h}}$ as a linear combination of syzygies
of type $\textbf{\emph{s}}_v^J$. Let
$\textbf{\emph{h}}=(d_1Y_1,\dots ,d_tY_t)^T$, with $d_k\in R$ and
$Y_k:=x^{\alpha_k}$, $1\leq k\leq t$, let $J=\{j\in\{1,\dots
,t\}|d_j\neq 0\}$, then
$lm(Y_j\textbf{\emph{X}}_j)=\textbf{\emph{Y}}$ for $j\in J$, and
\begin{align*}
\boldsymbol{0} = \sum_{j \in J} d_jY_jc_j\textbf{\emph{X}}_j &=
\sum_{j \in J} d_j \sigma^{\alpha_j}(c_j)Y_j \textbf{\emph{X}}_j
= \sum_{j \in J} d_j \sigma^{\alpha_j}(c_j) c_{\alpha_j, \beta_j} \boldsymbol{Y}. \\
\end{align*}
In addition, since $lm(Y_j\textbf{\emph{X}}_j)=\boldsymbol{Y}$
then $\textbf{\emph{X}}_j \mid \boldsymbol{Y}$ for any $j \in J$,
and hence $\textbf{\emph{X}}_J \mid \boldsymbol{Y}$, i.e., there
exists $\theta$ such that $\theta +
\exp(\textbf{\emph{X}}_J)=\alpha=\theta+\gamma_j+\beta_j$; but,
$\alpha_j+\beta_j=\alpha$ since
$lm(Y_j\textbf{\emph{X}}_j)=\boldsymbol{Y}$, so $\alpha_j=\theta +
\gamma_j$.

Thus,
\begin{align*}
\boldsymbol{0} &= \sum_{j \in J} d_j \sigma^{\alpha_j}(c_j)
c_{\alpha_j, \beta_j} \boldsymbol{Y}= \sum_{j \in J} d_j
\sigma^{\theta + \gamma_j}(c_j) c_{\theta + \gamma_j, \beta_j}
\boldsymbol{Y},
\end{align*}
and from Remark \ref{identities} we get that
\begin{align*}
0 &= \sum_{j \in J} d_j \sigma^{\theta + \gamma_j}(c_j) c_{\theta
+ \gamma_j, \beta_j}
= \sum_{j \in J} d_j c_{\theta, \gamma_j}^{-1} c_{\theta, \gamma_j} \sigma^{\theta + \gamma_j}(c_j) c_{\theta + \gamma_j, \beta_j}\\
&= \sum_{j \in J} d_j c_{\theta, \gamma_j}^{-1}
\sigma^{\theta}(\sigma^{\gamma_j}(c_j)) c_{\theta, \gamma_j}
c_{\theta + \gamma_j, \beta_j}\\
& = \sum_{j \in J} d_j c_{\theta, \gamma_j}^{-1}
\sigma^{\theta}(\sigma^{\gamma_j}(c_j))
\sigma^{\theta}(c_{\gamma_j, \beta_j}) c_{\theta, \gamma_j +
\beta_j}.
\end{align*}
We multiply the last equality by $c_{\theta,
\exp(\textbf{\emph{X}}_J)}^{-1}$, but $c_{\theta,
\exp(\textbf{\emph{X}}_J)}^{-1}=c_{\theta, \gamma_j +
\beta_j}^{-1}$ for any $j \in J$, so
\[0 =  \sum_{j \in J} d_j c_{\theta, \gamma_j}^{-1} \sigma^{\theta}(\sigma^{\gamma_j}(c_j) c_{\gamma_j, \beta_j}).\]
Since $A$ is bijective, there exists $d_j'$ such that
$\sigma^\theta(d_j') = d_j c_{\theta, \gamma_j}^{-1}$, so
\[0 =  \sum_{j \in J} \sigma^\theta(d_j') \sigma^{\theta}(\sigma^{\gamma_j}(c_j) c_{\gamma_j, \beta_j}),\]
and from this we get
\[0 = \sum_{j \in J} d_j' \sigma^{\gamma_j}(c_j) c_{\gamma_j, \beta_j}.\]
Let $J'$ be the saturation of $J$ with respect to
$\{\textbf{\emph{X}}_1,\dots ,\textbf{\emph{X}}_t\}$, since $d_j =
0$ if $j \in J'- J$, then $d_j'=0$, and hence, $(d_j'\mid j \in
J') \in Syz_R [\sigma^{\gamma_j}(c_j)c_{\gamma_j, \beta_j} \mid j
\in J']$. From this we have
\[(d_j' \mid j \in J') = \sum_{v = 1}^{r_{J'}}a_v b_{vj}^{J'}.\]

Since $\textbf{\emph{X}}_{J'}=\textbf{\emph{X}}_J$, then
$\textbf{\emph{X}}_{J'}$ also divides $\boldsymbol{Y}$, and hence
\begin{align*}
\boldsymbol{h} &= \sum_{j = 1}^{t} d_j Y_j \boldsymbol{e}_j =
\sum_{j \in J'} d_j c_{\theta, \gamma_j}^{-1} x^{\theta}
x^{\gamma_j} \boldsymbol{e}_j = \sum_{j \in J'} \sigma^{\theta}(d_j') x^{\theta} x^{\gamma_j} \boldsymbol{e}_j\\
&= \sum_{j \in J'} x^{\theta} d_j' x^{\gamma_j} \boldsymbol{e}_j =
\sum_{j \in J'} x^{\theta}\left(\sum_{v =1}^{r_{J'}} a_v
b_{vj}^{J'} \right)x^{\gamma_j} \boldsymbol{e}_j = \sum_{j \in J'} \sum_{v =1}^{r_{J'}} x^{\theta}a_v b_{vj}^{J'} x^{\gamma_j} \boldsymbol{e}_j\\
&= \sum_{v =1}^{r_{J'}} x^{\theta}a_v \sum_{j \in J'} b_{vj}^{J'} x^{\gamma_j} \boldsymbol{e}_j\\
&= \sum_{v =1}^{r_{J'}}
\sigma^{\theta}(a_v)x^{\theta}\boldsymbol{s}_v^{J'}.
\end{align*}
\end{proof}
Finally, we will calculate $Syz(G)$ using $Syz(L_G)$. Applying
Division Algorithm and Corollary \ref{1117} to the columns of
$Syz(L_G)$ (see (\ref{319a})), for each $1\leq v\leq l$ there
exists polynomials $p_{1v},\dots,p_{tv}\in A$ such that
\begin{center}
$z_{1v}''\textbf{\emph{g}}_1+\cdots
+z_{tv}''\textbf{\emph{g}}_t=p_{1v}\textbf{\emph{g}}_1+\cdots
+p_{tv}\textbf{\emph{g}}_t$,
\end{center}
i.e.,
\begin{equation}\label{311}
Z(L_G)^TG^T=P^TG^T,
\end{equation}
with
\begin{center}
$P:=\begin{bmatrix}p_{11}
& \cdots & p_{1l}\\
\vdots & & \vdots\\
p_{t1} & \cdots & p_{tl}
\end{bmatrix}$.
\end{center}
With this notation, we have the following result.
\begin{lemma}\label{syzygiesforG}
For quasi-commutative bijective $\sigma-PBW$ extensions, the
column module of $Z(G)$ coincides with the column module of
$Z(L_G)-P$, i.e., in a matrix notation
\begin{equation}
Z(G)=Z(L_G)-P.
\end{equation}
\end{lemma}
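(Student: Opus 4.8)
The plan is to prove the equality of column modules ``column module of $Z(G)$ $=$ column module of $Z(L_G)-P$'' by two inclusions, the harder one by Noetherian induction on a ``$G$-degree'' of a syzygy. The inclusion ``$\supseteq$'' is immediate: rewriting (\ref{311}) as $(Z(L_G)-P)^TG^T=0$ says that every column of $Z(L_G)-P$ is a syzygy of $G$, hence lies in the column module of $Z(G)$. For the reverse inclusion, given $\textbf{0}\neq\textbf{z}=(z_1,\dots,z_t)^T\in Syz(G)$ I set $\delta(\textbf{z}):=\max\{lm(lm(z_i)lm(\textbf{g}_i))\mid z_i\neq 0\}\in Mon(A^m)$ and $\delta(\textbf{0}):=\textbf{0}$. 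Since $\succeq$ is a well order on $Mon(A^m)$ (Proposition \ref{wellorder}), it is enough to show that every $\textbf{0}\neq\textbf{z}\in Syz(G)$ admits a decomposition $\textbf{z}=\textbf{z}'+\sum_v a_v(\textbf{z}_v''-\textbf{p}_v)$, where $\textbf{z}_v''$ and $\textbf{p}_v$ denote the columns of $Z(L_G)$ and $P$, $a_v\in A$, and $\textbf{z}'\in Syz(G)$ with $\delta(\textbf{z}')\prec\delta(\textbf{z})$; then induction on $\delta$ puts $\textbf{z}'$, and hence $\textbf{z}$, in the column module of $Z(L_G)-P$.

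To build the decomposition, put $X:=\delta(\textbf{z})$ and $J:=\{i\mid z_i\neq 0,\ lm(lm(z_i)lm(\textbf{g}_i))=X\}$. As $A$ is quasi-commutative, each $lt(z_i)lt(\textbf{g}_i)$ with $i\in J$ is a term of monomial exactly $X$, and comparing the coefficient of $X$ on both sides of $\sum_i z_i\textbf{g}_i=\textbf{0}$ forces $\sum_{i\in J}lt(z_i)lt(\textbf{g}_i)=\textbf{0}$; hence the vector $\textbf{w}$ whose $i$-th entry is $lt(z_i)$ for $i\in J$ and $0$ otherwise lies in $Syz(L_G)$ and is homogeneous of degree $X$ in the sense of Definition \ref{314}. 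By Proposition \ref{315} and Lemma \ref{syzforlt} we may take the generators $\textbf{z}_1'',\dots,\textbf{z}_l''$ of $Syz(L_G)$ in (\ref{319a}) homogeneous, so extracting from any expression $\textbf{w}=\sum_v b_v\textbf{z}_v''$ its homogeneous component of degree $X$ (using once more that products of terms are terms) gives $\textbf{w}=\sum_v a_v\textbf{z}_v''$ with each $a_v$ a term and each $a_v\textbf{z}_v''$ equal to $\textbf{0}$ or homogeneous of degree $X$. I then take $\textbf{z}':=\textbf{z}-\sum_v a_v(\textbf{z}_v''-\textbf{p}_v)=(\textbf{z}-\textbf{w})+\sum_v a_v\textbf{p}_v$, which is a syzygy of $G$ by the ``$\supseteq$'' inclusion.

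It remains to check $\delta(\textbf{z}')\prec X$, and this is the crux. First, $\delta(\textbf{z}-\textbf{w})\prec X$: for $i\in J$ the entry $z_i-lt(z_i)$ has leading monomial below $lm(z_i)$, for $i\notin J$ the entry $z_i$ already contributes below $X$, and compatibility of $\succeq$ with multiplication propagates the strict inequalities. Second, for each $v$ one has $\delta(\textbf{p}_v)\prec\deg(\textbf{z}_v'')$: since $\textbf{z}_v''\in Syz(L_G)$ the leading terms cancel in $\sum_i z_{iv}''\textbf{g}_i$, so $lm(\sum_i z_{iv}''\textbf{g}_i)\prec\deg(\textbf{z}_v'')$, while $\sum_i z_{iv}''\textbf{g}_i\in\langle G\rangle$ and $G$ is a Gr\"obner basis, so by Theorem \ref{algdivformodules} together with Corollary \ref{1117}(ii) the remainder is $\textbf{0}$ and $\delta(\textbf{p}_v)=\max_i lm(lm(p_{iv})lm(\textbf{g}_i))=lm(\sum_i z_{iv}''\textbf{g}_i)$. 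Since $a_v\textbf{z}_v''$ is homogeneous of degree $X$, multiplying the previous inequality by $x^{\exp(a_v)}$ and using monomial-order compatibility yields $\delta(a_v\textbf{p}_v)\prec X$, hence $\delta(\sum_v a_v\textbf{p}_v)\prec X$ and finally $\delta(\textbf{z}')\prec X$. This completes the induction and the proof. I expect the degree estimate in this last step to be the main obstacle: one must track carefully how the homogeneous degree of each $\textbf{z}_v''$, the term $a_v$ multiplying it, and the ``tail'' $\textbf{p}_v$ interact under the admissible order, with quasi-commutativity (products of terms are terms) and bijectivity (invertibility of the $c_{\alpha,\beta}$, used through Theorem \ref{algdivformodules} and Lemma \ref{syzforlt}) being exactly what makes it work.
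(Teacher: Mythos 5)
Your proposal is correct and follows essentially the same route as the paper: the easy inclusion from $(Z(L_G)-P)^TG^T=0$, then for a syzygy $\textbf{\emph{z}}$ of $G$ the extraction of the homogeneous leading-term syzygy of degree $\textbf{\emph{X}}$, its expression through the homogeneous generators $\textbf{\emph{z}}_v''$ of $Syz(L_G)$ with term coefficients $a_v$, and the subtraction of $\sum_v a_v(\textbf{\emph{z}}_v''-\textbf{\emph{p}}_v)$ to strictly lower $\max_j lm(lm(z_j)lm(\textbf{\emph{g}}_j))$, using the Division Algorithm bound on the $p_{jv}$ and the cancellation $\sum_j z_{jv}''lt(\textbf{\emph{g}}_j)=\textbf{0}$. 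The only cosmetic difference is that you phrase the descent as induction along the well order where the paper picks a minimal counterexample, which is the same argument.
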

\begin{proof}
From (\ref{311}), $(Z(L_G)-P)^TG^T=0$, so each column of
$Z(L_G)-P$ is in $Syz(G)$, i.e., each column of $Z(L_G)-P$ is an
$A$-linear combination of columns of $Z(G)$. Thus, $\langle
Z(L_G)-P \rangle \subseteq \langle Z(G)\rangle$.

Now we have to prove that $\langle Z(G)\rangle\subseteq \langle
Z(L_G)-P \rangle$. Suppose that $\langle Z(G)\rangle\nsubseteq
\langle Z(L_G)-P \rangle$, so there exists
$\textbf{\emph{z}}'=(z_1',\dots,z_t')^T\in \langle Z(G)\rangle$
such that $\textbf{\emph{z}}'\notin \langle Z(L_G)-P \rangle$;
from all such vectors we choose one such that
\begin{equation}\label{X1}
\textbf{\emph{X}}:=\max_{1\leq j\leq
t}\{lm(lm(z_j')lm(\textbf{\emph{g}}_j))\}
\end{equation}
be the least. Let $\textbf{\emph{X}}=X\textbf{\emph{e}}_i$ and
\begin{center}
$J:=\{j\in\{1,\dots
,t\}|lm(lm(z_j')lm(\textbf{\emph{g}}_j))=\textbf{\emph{X}}\}$.
\end{center}
Since $A$ is quasi-commutative and $\textbf{\emph{z}}'\in Syz(G)$
then
\[
\sum_{j\in J}lt(z_j')lt(\textbf{\emph{g}}_j)=\textbf{0}.
\]
Let $\textbf{\emph{h}}:=\sum_{j\in J
}lt(z_j')\widetilde{\textbf{\emph{e}}}_j$, where
$\widetilde{\textbf{\emph{e}}}_1,\dots
,\widetilde{\textbf{\emph{e}}}_t$ is the canonical basis of $A^t$.
Then, $\textbf{\emph{h}}\in Syz(lt(\textbf{\emph{g}}_1),\dots
,lt(\textbf{\emph{g}}_t))$ is a homogeneous syzygy of degree
$\textbf{\emph{X}}$. Let $B:=\{\textbf{\emph{z}}_1'',\dots,
\textbf{\emph{z}}_l''\}$ be a homogeneous generating set for the
syzygy module $Syz(L_G))$, where $\textbf{\emph{z}}_v''$ has
degree $\textbf{\emph{Z}}_v=Z_v\textbf{\emph{e}}_{i_v}$  (see
(\ref{319a})). Then,
$\textbf{\emph{h}}=\sum_{v=1}^la_v\textbf{\emph{z}}_v''$, where
$a_v\in A$, and hence
\begin{equation*}
\textbf{\emph{h}}=(a_1z_{11}''+\cdots +a_lz_{1l}'',\dots
,a_1z_{t1}''+\cdots +a_lz_{tl}'')^T.
\end{equation*}
We can assume that for each $1\leq v\leq l$, $a_v$ is a term. In
fact, consider the first entry of $\textbf{\emph{h}}$: completing
with null terms, each $a_v$ is an ordered sum of $s$ terms
\begin{center}
$(c_{11}X_{11}+\cdots +c_{1s}X_{1s})z_{11}''+\cdots
+(c_{l1}X_{l1}+\cdots +c_{ls}X_{ls})z_{1l}''$,
\end{center}
with $X_{v1}\succ X_{v2}\succ \cdots \succ X_{vs}$ for each $1\leq
v\leq l$, so
\begin{equation}\label{primeraentrada}
\left.
\begin{gathered}
lm(X_{11}lm(z_{11}''))\succ lm(X_{12}lm(z_{11}''))\succ\cdots \succ lm(X_{1s}lm(z_{11}'')) \\
\vdots \\
lm(X_{l1}lm(z_{1l}''))\succ lm(X_{l2}lm(z_{1l}''))\succ \cdots
\succ lm(X_{ls}lm(z_{1l}''))
\end{gathered}
\right\}
\end{equation}
Since each $\textbf{\emph{z}}_v''$ is a homogeneous syzygy, each
entry $z_{jv}''$ of $\textbf{\emph{z}}_v''$ is a term, but the
first entry of $\textbf{\emph{h}}$ is also a term, then from
(\ref{primeraentrada}) we can assume that $a_v$ is a term.

We note that for $j\in J$
\begin{equation*}
lt(z_j')=a_1z_{j1}''+\cdots +a_lz_{jl}'',
\end{equation*}
and for $j\notin J$
\begin{equation*}
a_1z_{j1}''+\cdots +a_lz_{jl}''=0.
\end{equation*}
Moreover, let $j\in J$, so $lm(lm(a_1z_{j1}''+\cdots
+a_lz_{jl}'')lm(\textbf{\emph{g}}_j))=lm(lm(z_j')lm(\textbf{\emph{g}}_j))=\textbf{\emph{X}}$,
and we can choose those $v$ such that $lm(a_vz_{jv}'')=lm(z_j')$,
for the others $v$ we can take $a_v=0$. Thus, for $j$ and such $v$
we have
\begin{equation*}
lm(lm(a_v)lm(lm(z_{jv}'')lm(\textbf{\emph{g}}_j)))=\textbf{\emph{X}}=X\textbf{\emph{e}}_i.
\end{equation*}
On the other hand, for $j,j'\in J$ with $j'\neq j$, we know that
$\textbf{\emph{z}}_v''$ is homogeneous of degree
$\textbf{\emph{Z}}_v=Z_v\textbf{\emph{e}}_{i_v}$, hence, if
$z_{j'v}''\neq 0$, then
$lm(lm(z_{j'v}'')lm(\textbf{\emph{g}}_{j'}))=\textbf{\emph{Z}}_v=lm(lm(z_{jv}'')lm(\textbf{\emph{g}}_{j}))$.
Thus, we must conclude that $i_v=i$ and
\begin{equation}\label{125}
lm(lm(a_v)lm(lm(z_{jv}'')lm(\textbf{\emph{g}}_j)))=\textbf{\emph{X}},
\end{equation}
for any $v$ and any $j$ such that $a_{v}\neq 0$ and $z_{jv}''\neq
0$.

We define $\textbf{\emph{q}}':=(q_1',\dots,q_t')^T$, where
$q_j':=z_j'$ if $j\notin J$ and $q_j':=z_j'-lt(z_j')$ if $j\in J$.
We observe that
$\textbf{\emph{z}}'=\textbf{\emph{h}}+\textbf{\emph{q}}'$, and
hence
$\textbf{\emph{z}}'=\sum_{v=1}^la_v\textbf{\emph{z}}_v''+\textbf{\emph{q}}'=
\sum_{v=1}^la_v(\textbf{\emph{s}}_v+\textbf{\emph{p}}_v)+\textbf{\emph{q}}'$,
with
$\textbf{\emph{s}}_v:=\textbf{\emph{z}}_v''-\textbf{\emph{p}}_v$,
where $\textbf{\emph{p}}_v$ is the column $v$ of matrix $P$
defined in (\ref{311}). Then, we define
\begin{center}
$\textbf{\emph{r}}:=(\sum_{v=1}^la_v\textbf{\emph{p}}_v)+\textbf{\emph{q}}'$,
\end{center}
and we note that
$\textbf{\emph{r}}=\textbf{\emph{z}}'-\sum_{v=1}^la_v\textbf{\emph{s}}_v\in
Syz(G)-\langle Z(L_G)-P\rangle$. We will get a contradiction
proving that $\max_{1\leq j\leq
t}\{lm(lm(r_j)lm(\textbf{\emph{g}}_j))\}\prec\textbf{\emph{X}}$.
For each $1\leq j\leq t$ we have
\begin{center}
$r_j=a_1p_{j1}+\cdots+a_lp_{jl}+q_j'$
\end{center}
and hence
\begin{align*}
lm(lm(r_j)lm(\textbf{\emph{g}}_j))& =lm(lm(a_1p_{j1}+\cdots+a_lp_{jl}+q_j')lm(\textbf{\emph{g}}_j))\\
& \preceq lm(\max\{lm(a_1p_{j1}+\cdots+a_lp_{jl}),lm(q_j')\}lm(\textbf{\emph{g}}_j))\\
& \preceq lm(\max\{\max_{1\leq v\leq
l}\{lm(lm(a_v)lm(p_{jv}))\},lm(q_j')\}lm(\textbf{\emph{g}}_j)).
\end{align*}
By the definition of $\textbf{\emph{q}}'$ we have that for each
$1\leq j\leq t$,
$lm(lm(q_j')lm(\textbf{\emph{g}}_j))\prec\textbf{\emph{X}}$. In
fact, if $j\notin J$,
$lm(lm(q_j')lm(\textbf{\emph{g}}_j))=lm(lm(z_j')lm(\textbf{\emph{g}}_j))\prec\textbf{\emph{X}}$,
and for $j\in J$,
$lm(lm(q_j')lm(\textbf{\emph{g}}_j))=lm(lm(z_j'-lt(z_j'))lm(\textbf{\emph{g}}_j))\prec\textbf{\emph{X}}$.
On the other hand,
\begin{equation*}
\sum_{j=1}^tz_{jv}''\textbf{\emph{g}}_j=\sum_{j=1}^tp_{jv}\textbf{\emph{g}}_j,
\end{equation*}
with
\[
lm(\sum_{j=1}^tz_{jv}''\textbf{\emph{g}}_j)=\max_{1\leq j\leq
t}\{lm(lm(p_{jv})lm(\textbf{\emph{g}}_j))\}.
\]
But, $\sum_{j=1}^tz_{jv}''lt(\textbf{\emph{g}}_j)=\textbf{0}$ for
each $v$, then
\[
lm(\sum_{j=1}^tz_{jv}''\textbf{\emph{g}}_j)\prec\max_{1\leq j\leq
t}\{lm(lm(z_{jv}'')lm(\textbf{\emph{g}}_j))\}.
\]
Hence,
\[
\max_{1\leq j\leq
t}\{lm(lm(p_{jv})lm(\textbf{\emph{g}}_j))\}\prec\max_{1\leq j\leq
t}\{lm(lm(z_{jv}'')lm(\textbf{\emph{g}}_j))\}
\]
for each $1\leq v\leq l$. From (\ref{125}), $
\max_{\substack{1\leq j\leq t \\ 1\leq v\leq
l}}\{lm(lm(a_v)lm(lm(p_{jv})lm(\textbf{\emph{g}}_j)))\}\prec\max_{\substack{1\leq
j\leq t \\ 1\leq v\leq l
}}\{lm(lm(a_v)lm(lm(z_{jv}'')lm(\textbf{\emph{g}}_j)))\}=\textbf{\emph{X}}
$, and hence, we can conclude that $\max_{1\leq j\leq
t}\{lm(lm(r_j)lm(\textbf{\emph{g}}_j))\}\prec\textbf{\emph{X}}$.
\end{proof}
\begin{example}\label{319b}
Let $M := \langle \boldsymbol{f}_1, \boldsymbol{f}_2\rangle$,
where $\boldsymbol{f}_1 = x_1^2x_2^2\boldsymbol{e}_1 +
x_2x_3\boldsymbol{e}_2$ and $\boldsymbol{f}_2 =
2x_1x_2x_3\boldsymbol{e}_1 + x_2\boldsymbol{e}_2\in A^2$, with $A
:= \sigma(\mathbb{Q}[x_1])\langle x_2, x_3 \rangle$. In Example
\ref{231} we computed a Gröbner basis $G = \{\boldsymbol{f}_1,
\boldsymbol{f}_2, \boldsymbol{f}_3\}$ of $M$, where
$\boldsymbol{f}_3 = 12x_2x_3^2\boldsymbol{e}_2 -
\frac{9}{4}x_1x_2^2\boldsymbol{e}_2$. Now we will calculate
$Syz(F)$ with $F = \{\boldsymbol{f}_1, \boldsymbol{f}_2\}$:
\begin{enumerate}
\item[(i)] Firstly we compute $Syz(L_G)$ using Lemma \ref{syzforlt}:
\[L_G :=
\begin{bmatrix}
lt(\boldsymbol{f}_{1})  &lt(\boldsymbol{f}_{2})
&lt(\boldsymbol{f}_{3})
\end{bmatrix} =
\begin{bmatrix}
x_1^2x_2^2\boldsymbol{e}_1  &2x_1x_2x_3\boldsymbol{e}_1
&12x_2x_3^2\boldsymbol{e}_2
\end{bmatrix}.
\]
For this we choose the saturated subsets $J$ of $\{1, 2, 3\}$ with
respect to $\{x_2^2\boldsymbol{e}_1,$ $ x_2x_3\boldsymbol{e}_1,
x_2x_3^2\boldsymbol{e}_2\}$
and such that $\boldsymbol{X}_J \ne 0$: \\
$\centerdot$ For $J_1 = \{1\}$ we compute a system $B^{J_1}$ of
generators of
\[Syz_{\mathbb{Q}[x_1]}[\sigma^{\gamma_1}(lc(\boldsymbol{f}_1))c_{\gamma_1, \beta_1}],\]
where $\beta_1 :=$ $\exp(lm(\boldsymbol{f}_1))$ and $\gamma_1 =$
$\exp(\boldsymbol{X}_{J_{1}}) - \beta_1$. Then, $B^{J_1} = \{0\}$,
and hence we have only one generator $\boldsymbol{b}_1^{J_1} =
(b_{11}^{J_1})=  0$ and $\boldsymbol{s}_1^{J_1} =
b_{11}^{J_1}x^{\gamma_1}\boldsymbol{\tilde{e}}_1 =
0\boldsymbol{\tilde{e}}_1$, with $\boldsymbol{\tilde{e}}_1 = (1,
0, 0)^T$.

$\centerdot$ For $J_2 = \{2\}$ and $J_3= \{3\}$ the situation is
similar.

$\centerdot$ For $J_{1,2} = \{1, 2\}$, a system of generators of
\[Syz_{\mathbb{Q}[x_1]}[\sigma^{\gamma_1}(lc(\boldsymbol{f}_1))c_{\gamma_1, \beta_1} \hspace{0.3 cm} \sigma^{\gamma_2}(lc(\boldsymbol{f}_2))c_{\gamma_2, \beta_2}],\]
where $\beta_1 =$ $\exp(lm(\boldsymbol{f}_1))$, $\beta_2 =$
$\exp(lm(\boldsymbol{f}_2))$, $\gamma_1 =$
$\exp(\boldsymbol{X}_{J_{1, 2}}) - \beta_1$ and $\gamma_2 =$
$\exp(\boldsymbol{X}_{J_{1, 2}}) - \beta_2$, is $B^{J_{1, 2}} =
\{(4, - \frac{9}{4}x_1)\}$, thus we have only one generator
$\boldsymbol{b}_1^{J_{1, 2}} = (b_{11}^{J_{1, 2}}, b_{12}^{J_{1,
2}}) = (4, - \frac{9}{4}x_1)$ and
\begin{align*}
\boldsymbol{s}_1^{J_{1, 2}} &= b_{11}^{J_{1, 2}}x^{\gamma_1}\boldsymbol{\tilde{e}}_1 + b_{12}^{J_{1, 2}}x^{\gamma_2}\boldsymbol{\tilde{e}}_2 \\
&= 4x_3 \boldsymbol{\tilde{e}}_1 - \frac{9}{4}x_1x_2\boldsymbol{\tilde{e}}_2\\
&= \begin{pmatrix}
4x_3\\
- \frac{9}{4}x_1x_2\\
0
\end{pmatrix}.
\end{align*}
Then,
\[Syz(L_G) = \left \langle \begin{pmatrix}
4x_3\\
- \frac{9}{4}x_1x_2\\
0
\end{pmatrix} \right \rangle,\]
or in a matrix notation
\[Syz(L_G) = Z(L_G) = \begin{bmatrix}
4x_3\\
- \frac{9}{4}x_1x_2\\
0
\end{bmatrix}. \]
\item[(ii)] Next we compute $Syz(G)$:
By Division Algorithm we have
\[4x_3\boldsymbol{f}_1 - \frac{9}{4}x_1x_2\boldsymbol{f}_2 + 0\boldsymbol{f}_3 = p_{11}\boldsymbol{f}_1 + p_{21}\boldsymbol{f}_2 + p_{31}\boldsymbol{f}_3,\]
so by the Example \ref{231}, $p_{11} = 0 = p_{21}$ and $p_{31} =
1$, i.e., $P=\boldsymbol{\tilde{e}}_3$. Thus,
\begin{align*}
Z(G) &= Z(L_G) - P \\
&= \begin{bmatrix}
4x_3\\
- \frac{9}{4}x_1x_2\\
- 1
\end{bmatrix}
\end{align*}
and
\[Syz(G) = \left \langle \begin{pmatrix}
4x_3\\
- \frac{9}{4}x_1x_2\\
- 1
\end{pmatrix} \right \rangle. \]
\item[(iii)] Finally we compute $Syz(F)$:
since
\[\boldsymbol{f}_1 = 1\boldsymbol{f}_1 + 0\boldsymbol{f}_2 + 0\boldsymbol{f}_3, \ \boldsymbol{f}_2 = 0\boldsymbol{f}_1 + 1\boldsymbol{f}_2 + 0\boldsymbol{f}_3\]
then
\[Q = \begin{bmatrix}
1 &0\\
0 &1\\
0 &0
\end{bmatrix}.\]
Moreover,
\[\boldsymbol{f}_1 = 1\boldsymbol{f}_1 + 0\boldsymbol{f}_2, \ \boldsymbol{f}_2 = 0\boldsymbol{f}_1 + 1\boldsymbol{f}_2, \
\boldsymbol{f}_3 = 4x_3\boldsymbol{f}_1 -
\frac{9}{4}x_1x_2\boldsymbol{f}_2,\] hence
\[H = \begin{bmatrix}
1 &0 &4x_3\\
0 &1 &- \frac{9}{4}x_1x_2
\end{bmatrix}.\]

By Theorem \ref{syzygiesmainresult},
\[Syz(F) = \begin{bmatrix}
(Z(G)^TH^T)^T &I_2 - (Q^TH^T)^T
\end{bmatrix},\]
with
\begin{align*}
(Z(G)^TH^T)^T &= \left(\begin{bmatrix} 4x_3 &- \frac{9}{4}x_1x_2
&- 1
\end{bmatrix}
\begin{bmatrix}
1 &0\\
0 &1\\
4x_3 &- \frac{9}{4}x_1x_2
\end{bmatrix}\right)^T \\
&= \left(\begin{bmatrix} 0 &0
\end{bmatrix}\right)^T
= \begin{bmatrix}
0\\
0
\end{bmatrix}
\end{align*}
and
\[I_2 - (Q^TH^T)^T = \begin{bmatrix}
0 &0\\
0 &0
\end{bmatrix}.\]
From this we conclude that $Syz(F) = 0$. Observe that this means
that $M$ is free.
\end{enumerate}
\end{example}



\begin{flushright}
Departamento de Matemáticas\\
Universidad Nacional de Colombia\\
Bogotá, Colombia\\
\textit{e-mail}: \texttt{jolezamas@unal.edu.co}\\
\end{flushright}
\end{document}